\begin{document}

\newtheorem{theorem}{Theorem}[section]
\newtheorem{tha}{Theorem}
\newtheorem{conjecture}[theorem]{Conjecture}
\newtheorem{corollary}[theorem]{Corollary}
\newtheorem{lemma}[theorem]{Lemma}
\newtheorem{claim}[theorem]{Claim}
\newtheorem{proposition}[theorem]{Proposition}
\newtheorem{construction}[theorem]{Construction}
\newtheorem{definition}[theorem]{Definition}
\newtheorem{question}[theorem]{Question}
\newtheorem{problem}[theorem]{Problem}
\newtheorem{remark}[theorem]{Remark}
\newtheorem{observation}[theorem]{Observation}

\newcommand{\ex}{{\mathrm{ex}}}

\newcommand{\EX}{{\mathrm{EX}}}

\newcommand{\AR}{{\mathrm{AR}}}

\def\endproofbox{\hskip 1.3em\hfill\rule{6pt}{6pt}}
\newenvironment{proof}%
{%
\noindent{\it Proof.}
}%
{%
 \quad\hfill\endproofbox\vspace*{2ex}
}
\def\qed{\hskip 1.3em\hfill\rule{6pt}{6pt}}
\def\ce#1{\lceil #1 \rceil}
\def\fl#1{\lfloor #1 \rfloor}
\def\lr{\longrightarrow}
\def\e{\varepsilon}
\def\ex{{\rm\bf ex}}
\def\cB{{\cal B}}
\def\cC{{\cal C}}
    \def\cD{{\cal D}}
\def\cF{{\cal F}}
\def\cG{{\cal G}}
\def\cH{{\cal H}}
\def\ck{{\cal K}}
\def\cI{{\cal I}}
\def\cJ{{\cal J}}
\def\cL{{\cal L}}
\def\cM{{\cal M}}
\def\cP{{\cal P}}
\def\cQ{{\cal Q}}
\def\cS{{\cal S}}

\def\pr{{\rm Pr}}
\def\exp{{\rm  exp}}

\def\bkl{{\cal B}^{(k)}_\ell}
\def\cmkt{{\cal M}^{(k)}_{t+1}}
\def\cpkl{{\cal P}^{(k)}_\ell}
\def\cckl{{\cal C}^{(k)}_\ell}
\def\pkl{\mathbb{P}^{(k)}_\ell}
\def\ckl{\mathbb{C}^{(k)}_\ell}

\def\mC{{\cal C}}

\def\imp{\Longrightarrow}
\def\1e{\frac{1}{\e}\log \frac{1}{\e}}
\def\ne{n^{\e}}
\def\rad{ {\rm \, rad}}
\def\equ{\Longleftrightarrow}
\def\pkl{\mathbb{P}^{(k)}_\ell}

\def\c2ell{\mathbb{C}^{(2)}_\ell}
\def\codd{\mathbb{C}^{(k)}_{2t+1}}
\def\ceven{\mathbb{C}^{(k)}_{2t+2}}
\def\podd{\mathbb{P}^{(k)}_{2t+1}}
\def\peven{\mathbb{P}^{(k)}_{2t+2}}
\def\TT{{\mathbb T}}
\def\bbT{{\mathbb T}}
\def\cE{{\mathcal E}}
\def\e{\epsilon}

\def\mE{\mathbb{E}}

\def\wt{\widetilde}
\def\wh{\widehat}
\voffset=-0.5in
	
\setstretch{1.1}
\pagestyle{myheadings}
\markright{{\small \sc  Collier-Cartaino, Graber, Jiang:}
  {\it\small Linear Tur\'an numbers of  linear cycles}}

\title{\huge\bf 
Linear Tur\'an numbers of $r$-uniform linear cycles and related Ramsey numbers}

\author{
Clayton Collier-Cartaino\thanks{Dept. of Mathematics, Miami University, Oxford, OH 45056, USA. E-mail:colliec@miamioh.edu
}\quad\quad
Nathan Graber\thanks{Dept. of Mathematics, Miami University, Oxford, OH 45056,
USA. E-mail: grabernt@miamioh.edu} \quad \quad
Tao Jiang\thanks{Dept. of Mathematics, Miami University, Oxford,
OH 45056, USA. E-mail: jiangt@miamioh.edu. Research supported in part by
Simons Foundation Collaboration Grant \#282906.
    %{\rm\small {\jobname}.tex,}\newline ${}$ \hfill}
 \newline\indent
{\it 2010 Mathematics Subject Classifications:}
05D05, 05C65, 05C35.\newline\indent
{\it Key Words}:  Ramsey number, Tur\'an number, cycle, linear  hypergraphs.
} }

\date{}

\maketitle
\begin{abstract}
An $r$-uniform hypergraph is called an {\it $r$-graph}.
A hypergraph is {\it linear} if every two edges intersect in at most one vertex.
Given a linear $r$-graph $H$ and a positive integer $n$, the {\it linear Tur\'an number}
$ex_L(n,H)$ is the maximum number of edges in a linear $r$-graph $G$ that
does not contain $H$ as a subgraph.  For each $\ell\geq 3$, let $C^r_\ell$ denote the $r$-uniform linear cycle of length $\ell$, which is an $r$-graph with edges $e_1,\ldots, e_\ell$ such
that $\forall i\in [\ell-1]$, $|e_i\cap e_{i+1}|=1$, $|e_\ell\cap e_1|=1$ and $e_i\cap e_j=\emptyset$ for all other pairs $\{i,j\}, i\neq j$.
For all $r\geq 3$ and $\ell\geq 3$, 
we show that there exist positive constants $c_{m,r}$ and $c'_{m,r}$, 
depending only $m$ and $r$, such
that  $ex_L(n,C^r_{2m})\leq c_{m,r} n^{1+\frac{1}{m}}$ and $ex_L(n,C^r_{2m+1})\leq c'_{m,r}
n^{1+\frac{1}{m}}$. 
This answers a question of Kostochka, Mubayi, and Verstra\"ete \cite{KMV-personal}. For even cycles, our result extends the result of Bondy and Simonovits \cite{BS} 
on the Tur\'an numbers of even cycles to linear hypergraphs. 

 Using our results on linear Tur\'an numbers we also obtain bounds on the cycle-complete
hypergraph Ramsey numbers.  We show that there are positive constants $a_{m,r}$ and $b_{m,r}$, depending only on $m$ and $r$, such that 
$R(C^r_{2m}, K^r_t)\leq a_{m,r} (\frac{t}{\ln t})^\frac{m}{m-1}$ and 
$R(C^r_{2m+1}, K^r_t)\leq b_{m,r} t^\frac{m}{m-1}$.
\end{abstract}

\section{Introduction}

A {\it hypergraph} $H=(V,E)$ consists of a set $V$ of vertices and a set $E$ of edges, where each edge is a subset of $V$. 
%If $V$ has $n$ vertices, often times it is convenient to just let $V=[n]$. 
If all the edges of $H$ have size $r$, then $H$ is said to be
{\it $r$-uniform} and will be called an $r$-graph for brevity. 
The complete $r$-graph on $n$ vertices will be denoted by $K^{r}_n$.
A hypergraph $H$ is {\it linear} if $\forall e,e'\in E(H), |e\cap e'|\leq 1$.
Given a family $\cH$ of $r$-graphs, the {\it Tur\'an number} of $\cH$ for a given
positive integer $n$,  denoted by $ex(n,\cH)$, is the maximum number of edges of an
$r$-graph on $n$ vertices that does not contain any member of $\cH$ as a subgraph.
If $\cH$ is a family of linear $r$-graphs, then we define, for a given positive integer $n$,
the {\it linear Tur\'an number} of $\cH$
to be the maximum number of edges of a linear $r$-graph on $n$ vertices that does not contain
any member of $\cH$ as a subgraph, and denote it by $ex_L(n,\cH)$. When $\cH$ consists of a single graph $H$, we write $ex(n,H)$ and $ex_L(n,H)$ for $ex(n,\cH)$ and $ex_L(n,\cH)$, respectively.

A linear cycle of length $\ell$ is a hypergraph with edges $e_1,\ldots, e_\ell$ such that
$\forall i\in [\ell-1]$, $|e_i\cap e_{i+1}|=1$, $|e_\ell\cap e_1|=1$ and $e_i\cap e_j=\emptyset$ for all other pairs $\{i,j\}, i\neq j$. We denote an $r$-uniform linear cycle of length $\ell$ by
$C^{r}_\ell$. In particular, $2$-uniform linear cycles are just the usual graph cycles.
The Tur\'an problem for graph cycles has been much studied. For odd cycles, the answer is
$\fl{\frac{n^2}{4}}$ for all sufficiently large $n$, with equality achieved by
a balanced complete bipartite graph on $n$ vertices. The problem for even cycles 
remains unresolved  except for $C_4$ \cite{Furedi-C4}. A general upper bound of $ex(n,C_{2m})\leq \gamma_m n^{1+\frac{1}{m}}$ for
some positive constant $\gamma_m$ was asserted by Erd\H{o}s (unpublished). The first
published proof was obtained by Bondy and Simonovits \cite{BS}, who showed that $ex(n,C_{2m})
\leq 20mn^{1+\frac{1}{m}}$ for all sufficiently large $n$. This was improved by Verstra\"ete
\cite{Verstraete-cycles} to $8(m-1)n^{1+\frac{1}{m}}$ and by Pikhurko \cite{Oleg} to
$(m-1)n^{1+\frac{1}{m}}$. Very recently, Bukh and Jiang \cite{BJ}  improved the upper bound to $80\sqrt{m\log m}\cdot n^{1+\frac{1}{m}}+10m^2n$ for all $n\geq (2m)^{8m^2}$.
For $m=2,3,5$, constructions of $C_{2m}$-free $n$-vertex graphs with $\Omega(n^{1+\frac{1}{m}})$ edges
are known (see \cite{Furedi-Simonovits}). Thus $ex(n,C_{2m})=\Theta(n^{1+\frac{1}{m}})$,
for $m\in \{2,3,5\}$. However, the order of magnitude 
of $ex(n,C_{2m})$ remains undetermined for all $m\not\in \{2,3,5\}$.

The Tur\'an problem for hypergraph cycles has also been explored.
There are several different notions of hypergraph cycles. A 
hypergraph $H$ is a {\it Berge cycle} of length $\ell$ if it consists of
$\ell$ distinct edges $e_1,\ldots, e_\ell$ such that there exists a list
of distinct vertices $x_1,\ldots, x_\ell$ satisfying that $\forall i\in [\ell-1]$
$e_i$ contains both $x_i$ and $x_{i+1}$ and that $e_\ell$ contains both $x_\ell$ and $x_1$.
Note that a $2$-uniform Berge cycle of length $\ell$ is just the usual graph cycle
of length $\ell$. For $r\geq 3$, however, $r$-uniform Berge cycles are not unique as
there are no constraints on how the $e_i$'s intersect outside $\{x_1,\ldots, x_\ell\}$.
Let $\cB^r_\ell$ denote the family of $r$-graphs that are Berge cycles of length $\ell$.
Gy\H{o}ri and Lemons \cite{GL-3-uniform, GL-all} showed that for all $r\geq 3, \ell\geq 3$,
there exists a positive constant $\beta_{r,\ell}$, depending on $r$ and $\ell$ such
that $ex(n,\cB^r_\ell)\leq \beta_{r,\ell} n^{1+\frac{1}{\fl{\ell/2}}}$.
Another notion of hypergraph cycles that has been actively investigated recently is
that of a linear cycle defined earlier. For fixed $r,\ell$, the $r$-uniform linear cycle $C^r_\ell$ 
of length $\ell$ is unique up to isomorphism.
We can also describe an $r$-uniform linear cycle using the notion of expansions.
Given a $2$-graph $G$, the {\it $r$-expansion} $G^{(r)}$ is the $r$-graph
obtained from $G$ 
by enlarging each edge of $G$ into an $r$-set using $r-2$ new vertices, called
{\it expansion vertices}, such that for different edges of $G$ we use disjoint sets of expansion
vertices. So an $r$-uniform linear cycle of length $\ell$ is precisely the $r$-expansion of a cycle of length $\ell$. F\"uredi and Jiang \cite{FJ} determined for all $r\geq 5,\ell\geq 3$ and sufficiently large $n$ the exact value of $ex(n,C^r_\ell)$, showing
that $ex(n,C^r_{2m+1})=\binom{n}{r}-\binom{n-m}{r}$ and $ex(n,C^r_{2m})=\binom{n}{r}
-\binom{n-m+1}{r}+\binom{n-m-1}{2}$, respectively. Kostochka, Mubayi, and Verstra\"ete 
\cite{KMV-shadows} 
have subsequently showed that the same holds for all $r\geq 3$, $\ell\geq 3$, and
sufficiently large $n$. In this paper, we study the linear Tur\'an number of $C^r_\ell$. 

Determining $ex_L(n,C^3_3)$ is equivalent to the famous $(6,3)$-problem,which is a special case of an old and general extremal problem of  Brown, Erd\H{o}s,
and S\'os \cite{BES}. The Brown-Erd\H{o}s-S\'os problem asks to determine the function 
$f_r(n,v,e)$, which  denotes the maximum number of edges in an $r$-graph on
$n$ vertices in which no $v$ vertices spans $e$ or more edges. The problem of
estimating $f_3(n,6,3)$ is known as the $(6,3)$-problem. 
It is easy to see that $ex_L(n,C^3_3)=f_3(n,6,3)$.  In one of the classical
results in extremal combinatorics, Ruzsa and Szemer\'edi \cite{RS} showed that for some constant $c>0$,
\begin{equation}\label{RS}
n^{2-c\sqrt{\log n}}< f_3(n,6,3)=o(n^2).
\end{equation}

\begin{proposition} 
$n^{2-c\sqrt{\log n}}< ex_L(n,C^3_3)=o(n^2).$
\end{proposition}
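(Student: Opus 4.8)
The plan is to read the statement off from two facts already recorded above: the identity $ex_L(n,C^3_3)=f_3(n,6,3)$ and the Ruzsa--Szemer\'edi estimates $n^{2-c\sqrt{\log n}}<f_3(n,6,3)=o(n^2)$ of (\ref{RS}). The only point in the identity that requires an argument is the inequality $ex_L(n,C^3_3)\le f_3(n,6,3)$. Let $G$ be a linear $3$-graph on $n$ vertices with no copy of $C^3_3$; I claim no six vertices of $G$ span three edges. Suppose $f_1,f_2,f_3$ are edges of $G$ contained in a $6$-element set $S$. Linearity gives $|f_i\cap f_j|\le 1$ whenever $i\ne j$, so by inclusion--exclusion $|f_1\cup f_2\cup f_3|\ge 9-3=6$; since $f_1\cup f_2\cup f_3\subseteq S$ and $|S|=6$, the bound is tight, so each $|f_i\cap f_j|=1$ and $f_1\cap f_2\cap f_3=\emptyset$. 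Consequently the three (singleton) sets $f_i\cap f_j$ are pairwise disjoint, and $f_1,f_2,f_3$ form exactly a copy of $C^3_3$ (the $3$-expansion of a triangle), contradicting the choice of $G$. So $G$ has the $(6,3)$-property and $e(G)\le f_3(n,6,3)$; taking the maximum over $G$ gives the inequality. The reverse inequality, which yields the full identity, is equally elementary and already noted before the statement; it will not be needed below.

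Given this, both halves of the statement follow at once. For the upper bound, $ex_L(n,C^3_3)\le f_3(n,6,3)=o(n^2)$ by (\ref{RS}). For the lower bound, it suffices to observe that the Behrend-type construction of Ruzsa and Szemer\'edi witnessing $f_3(n,6,3)>n^{2-c\sqrt{\log n}}$ is in fact a \emph{linear} triple system and is $C^3_3$-free, so it certifies $ex_L(n,C^3_3)>n^{2-c\sqrt{\log n}}$ directly; alternatively one may simply quote the identity $ex_L(n,C^3_3)=f_3(n,6,3)$ and apply (\ref{RS}) verbatim.

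I do not anticipate any real obstacle: the entire content lies in the already-cited Ruzsa--Szemer\'edi theorem, and what remains is the elementary dictionary between ``linear and $C^3_3$-free'' and the $(6,3)$-property, plus the (equally elementary) observation that the standard lower-bound construction is linear. In short, this proposition is a restatement of (\ref{RS}), included to place $ex_L(n,C^3_3)$ alongside the linear-cycle Tur\'an problems studied in the rest of the paper.
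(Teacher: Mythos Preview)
Your proposal is correct and follows exactly the approach the paper intends: the paper states (without a formal proof) that $ex_L(n,C^3_3)=f_3(n,6,3)$ is ``easy to see,'' cites \eqref{RS}, and notes afterward that the Behrend-type construction is linear and $C^3_3$-free. You have simply supplied the elementary inclusion--exclusion details behind the ``easy to see,'' and correctly observed that only the inequality $ex_L(n,C^3_3)\le f_3(n,6,3)$ together with linearity of the lower-bound construction is actually needed.
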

Let us reiterate the following connection in the literature between $f_3(n,6,3)$ and the function $r_3(n)$, which denotes the largest size of a set of integers in $[n]$ not containing a
$3$-term arithmetic progression.
 Given $n$, let $N=\fl{\frac{n}{6}}$ and let $A$ be a subset of size $r_3(N)$ that contains no $3$-term arithmetic progression. Let $X, Y, Z$ be disjoint sets with $X=[N], Y=[2N], Z=[3N]$, respectively. 
The $3$-partite $3$-graph $H=\{\{x,y,z\}: x\in X, y\in Y, z\in Z, \exists
a\in A \, \,  y=x+a, z=x+2a\}$ satisfies that no six points spanns three or more edges and
$|H|=Nr_3(N)$. Hence $f(n,6,3)\geq \fl{\frac{n}{6}} \cdot r_3(\fl{\frac{n}{6}})$.

The upper bound in \eqref{RS}, established with a short proof using regularity lemma in \cite{RS},  implies Roth's theorem \cite{Roth} that $r_3(n)=o(n)$.
Conversely, the lower bound in \eqref{RS} was established using Behrend's \cite{Beh} construction of large subsets of $[n]$  not containing a $3$-term arithmetic progression.
Behrend's construction has size $\Omega(n^{1-c'\sqrt{\log n}})$, for some constant $c'>0$.
Ever since Roth's theorem \cite{Roth}, the problem of estimating $r_3(n)$ has drawn much interest. The best current bounds are as follows: for some constant $c>0$
\begin{equation}
\frac{n}{e^{c\sqrt{\log n}}}\leq r_3(n)\leq \frac{n}{(\log n)^{1-o(1)}}.
\end{equation}
Back to the linear cycle problem, observe that the graph $H$ constructed above is linear and contains no linear triangle.
Using a construction similar to $H$ and so-called $2$-fold Sidon sets, Lazebnik and Verstra\"ete 
\cite{LV} constructed  linear $3$-graphs with girth $5$ and $\Omega(n^{3/2})$ edges. 
On the other hand,  it is not hard to show that $ex_L(n,C^3_4)=O(n^{3/2})$. 
Hence $ex_L(n,C^3_4)=\Theta(n^{3/2})$.
Kostochka, Mubayi, and Verstra\"ete \cite{KMV-personal} obtained the following bounds 
for $ex_L(n,C^3_5)$.
\begin{theorem}\label{C5} {\bf \cite{KMV-personal}}
There are constants $a,b>0$ such that $an^{3/2}<ex_L(n,C^3_5)<bn^{3/2}$.
\end{theorem}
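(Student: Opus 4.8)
The plan is to prove the two bounds separately and by rather different methods. The upper bound $ex_L(n,C^3_5)<bn^{3/2}$ is the easier direction, handled by a counting argument in the $2$-shadow; the lower bound $ex_L(n,C^3_5)>an^{3/2}$ requires an explicit algebraic construction, and is where the real work lies.

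\textbf{Upper bound.} Let $G$ be a linear $3$-graph on $n$ vertices with $m$ edges, and suppose $m\ge bn^{3/2}$ for a large constant $b$. Peeling off vertices of small degree one at a time, we may pass to a sub-$3$-graph $G'\subseteq G$ (still linear, still $C^3_5$-free) of minimum degree at least $m/(2n)\ge \tfrac12 b\sqrt{n}$. Let $\Gamma=\partial G'$ be its $2$-shadow. Since $G'$ is linear, distinct edges contribute disjoint pairs, so $\Gamma$ is a simple graph whose edge set is partitioned into the triangles $\binom{e}{2}$, $e\in E(G')$, and $\deg_\Gamma(v)=2\deg_{G'}(v)\ge b\sqrt{n}$ for every $v$. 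Fix an edge $e_0=\{x,y,z\}$ of $G'$; in $\Gamma$ the pair $xy$ has ``third vertex'' $z$. A routine path count in $\Gamma$ (there are at least roughly $(b\sqrt{n})^4/n=b^4n$ walks $x,u_1,u_2,u_3,y$ of length $4$, and all but a lower-order fraction visit five distinct vertices, none equal to $z$) produces at least $cb^4n$ genuine $5$-cycles $x\,u_1\,u_2\,u_3\,y$ of $\Gamma$ through the edge $xy$. For any $5$-cycle $w_1w_2w_3w_4w_5$ of $\Gamma$, the five edges of $G'$ whose shadow triangles contain the five sides form a linear copy of $C^3_5$ provided their five third vertices are distinct and avoid $w_1,\dots,w_5$; linearity of $G'$ forces these third vertices to avoid $w_1,\dots,w_5$ and to avoid $z$ automatically, and rules out all but a lower-order fraction of the remaining clashes among them by one more round of codegree counting. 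Taking $b$ large enough to beat the implied constants then yields a $C^3_5$ inside $G'\subseteq G$, a contradiction. The only subtle point is the bookkeeping of degenerate $5$-cycles in the last step; note it is essential that $\Gamma$ carries a triangle decomposition, since a merely dense (say bipartite) graph need contain no $C_5$ at all.

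\textbf{Lower bound.} Here one wants a linear $3$-graph on $n$ vertices with $\Omega(n^{3/2})$ edges and no linear pentagon. The model is the girth-$5$ construction of Lazebnik and Verstra\"ete \cite{LV}: partition the vertex set into a bounded number of parts, each a copy of a fixed group or field of order $\Theta(n)$; fix a set $A$ of $\Theta(\sqrt{n})$ ``patterns'' and let the edges be all $\Theta(n)$ translates of each pattern, so $|E(G)|=\Theta(n^{3/2})$, with $A$ chosen Sidon-like so that $G$ is automatically linear. To rule out $C^3_5$: if $e_1,\dots,e_5$ is a linear pentagon with links $v_i=e_i\cap e_{i+1}$, then no two cyclically consecutive $v_i$ lie in the same part (else two links coincide), so the sequence of parts of $v_1,\dots,v_5$ is a proper colouring of the cycle $C_5$ --- of which there are only boundedly many up to the symmetries of the construction. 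For each such type one writes down, for each of the five edges, the linear relations it must satisfy; eliminating the coordinates of the links and of the private vertices leaves one short linear relation among the pattern parameters $a_1,\dots,a_5\in A$. The Sidon-type hypothesis on $A$ forces this relation to be degenerate, i.e.\ forces a repetition among the $a_i$ of a prescribed shape, and one checks that each such repetition makes two of the ten vertices of the pentagon collide, a contradiction.

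\textbf{The main obstacle.} The difficulty of the theorem lies entirely in the lower bound, and specifically in choosing the defining relation. A new feature relative to the $C^3_4$ (girth-$5$) problem is that the relation a pentagon produces has the shape ``(three parameters) $=$ (another two parameters)'', which the ordinary Sidon property does not forbid --- indeed the naive arithmetic-progression construction over a Sidon set really does contain linear pentagons; on the other hand one cannot simply impose the stronger $B_3$-type condition, since $B_3$-sets in $[N]$ have only $\Theta(N^{1/3})$ elements and would yield merely $n^{4/3}$ edges. Circumventing this --- for instance by working over a two-dimensional ground set, or by superimposing a $C_4$-free host graph on an algebraic labelling so that the offending relation is blocked geometrically rather than additively, while still retaining $\Theta(\sqrt{n})$ degrees of freedom --- is the crux, and is presumably what the construction of Kostochka, Mubayi and Verstra\"ete \cite{KMV-personal} achieves.
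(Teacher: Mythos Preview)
The paper does not contain its own proof of this theorem: it is quoted from \cite{KMV-personal} as context and motivation, and no argument is supplied in the paper. So there is nothing to compare your proposal against directly.

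That said, two comments are in order.

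\emph{Upper bound.} This is the special case $r=3$, $m=2$ of the paper's Theorem~\ref{odd-cycles}, whose proof goes through the leveled-linear-quasi-tree machinery of Sections~5--6 and not through anything resembling your shadow-counting argument. Your route is more elementary and tailored to $C^3_5$; as a sketch it is plausible, but the assertion that ``linearity of $G'$ forces these third vertices to avoid $w_1,\dots,w_5$'' is not correct as stated. For instance, with the pentagon $x\,u_1\,u_2\,u_3\,y$, the third vertex of the hyperedge through $xu_1$ may equal $u_3$ without any violation of linearity (the resulting edge $\{x,u_1,u_3\}$ meets each of the other four hyperedges in at most one point). Linearity rules out the adjacent cases $z_i\in\{w_{i-1},w_{i+2}\}$, but not the opposite vertex. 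These residual degeneracies can be handled by counting, but they are not automatic in the way you claim, and the ``one more round of codegree counting'' is where the real work of your argument would lie.

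\emph{Lower bound.} Here your proposal is not a proof. You correctly diagnose the obstacle --- a linear pentagon produces a relation of $3$-versus-$2$ type among the pattern parameters, which an ordinary Sidon set does not forbid, while a genuine $B_3$-set is too sparse to give $\Theta(n^{1/2})$ freedom --- but you do not give a construction that overcomes it; you explicitly defer to \cite{KMV-personal}. Since the paper also defers to \cite{KMV-personal} for this half of the theorem, neither source supplies the construction, and this remains the genuine gap in your write-up.
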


No lower or upper bounds on $ex_L(n,C^r_\ell)$ were formerly known for $\ell\not \in \{3,4,5\}$. 
Kostochka, Mubayi, and Verstra\"ete \cite{KMV-personal} asked if for all $r\geq 3, \ell\geq 3$, $ex_L(n,C^r_\ell)=O(n^{1+\frac{1}{\fl{\ell/2}}})$ holds. 
We answer their question in the affirmative in our main theorem below.

\begin{theorem} {\bf (Main theorem)} \label{main}
For all $r,\ell\geq 3$, there exists a constant $\alpha_{r,\ell}>0$, depending on 
$r$ and $\ell$,  such that 
$$ex_L(n,C^r_\ell)\leq \alpha_{r,\ell} n^{1+\frac{1}{\fl{\ell/2}}}.$$
\end{theorem}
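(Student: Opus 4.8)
The plan is to run a breadth-first search in $G$ and adapt the Bondy--Simonovits analysis of $ex(n,C_{2m})$, exploiting the rigidity forced by linearity; write $m=\fl{\ell/2}$, so that $\ell\in\{2m,2m+1\}$. Suppose for contradiction that $G$ is a linear $r$-graph on $n$ vertices, containing no $C^r_\ell$, with more than $\alpha_{r,\ell}n^{1+1/m}$ edges, where $\alpha_{r,\ell}$ is a large constant to be fixed at the end. Since the average degree of $G$ exceeds $r\alpha_{r,\ell}n^{1/m}$, the usual iterated deletion of low-degree vertices produces a linear subgraph $G'$ on at most $n$ vertices, still $C^r_\ell$-free, with $\delta(G')\ge D:=\tfrac12 r\alpha_{r,\ell}n^{1/m}$; replace $G$ by $G'$. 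The point that makes hypergraph BFS tractable is that linearity forces the link of every vertex $v$ to be a matching of $(r-1)$-sets, so one ``clean'' exploration step out of $v$ reaches $(r-1)d(v)\ge(r-1)D$ genuinely new vertices along edges that are pairwise disjoint off $v$.

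Build BFS levels $L_0,L_1,\dots$: for the even case $\ell=2m$ root the search at a single vertex, and for the odd case $\ell=2m+1$ root it at an edge $e_0$, exploring outward from the vertices of $e_0$ while forbidding reuse of $e_0$, so that a cycle passing through $e_0$ once and closed up by two outward linear paths of lengths $p$ and $q$ has length $p+q+1$; this is how $m=\fl{\ell/2}$ alone governs both $C^r_{2m}$ and $C^r_{2m+1}$. Require each explored edge to advance \emph{fully} into the next level (all $r-1$ of its new vertices one level deeper). Then a vertex reached at level $i$ carries a linear path of length $i$ back to the root, and two such root-paths that end at the same vertex and differ in their first edge combine into a genuine linear cycle of length exactly $2i$ (respectively $2i+1$ through $e_0$): consecutive edges meet in the single shared level vertex, and non-consecutive edges lie in disjoint ranges of levels, hence are disjoint because $G$ is linear.

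Now run the Bondy--Simonovits dichotomy on the clean part of the search. If every level up to level $m$ expands by a factor at least $c_r D$ (the constant $c_r$ coming from the matching-link bound, after subtracting the at most $O(rm)$ edges per vertex that could cross back to earlier levels, negligible beside $D$), then $n\ge|L_m|\ge (c_rD)^m=(c_r r\alpha_{r,\ell}/2)^m\,n$, impossible once $\alpha_{r,\ell}$ is large. Hence there is a least level $i<m$ at which expansion fails: the $\approx (r-1)D\,|L_i|$ forward edge-slots out of $L_i$ pile up onto $o(D\,|L_i|)$ vertices, so many vertices of $L_i\cup L_{i+1}$ are reached by two essentially disjoint root-paths, and --- exactly as in Bondy--Simonovits --- from this oversaturated layer one extracts two internally disjoint root-paths of a common length $j\le m$ with distinct first edges, whose union is a linear cycle of length $2j$ (resp.\ $2j+1$). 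Since the attainable lengths form an interval reaching up to $\Theta_r(mn^{1/m})$, which for fixed $\ell$ and $n$ large contains $\ell$, we may arrange $2j=\ell$ or $2j+1=\ell$, contradicting $C^r_\ell$-freeness.

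I expect the last step to be the main obstacle, just as it is in the graph case: $C^r_\ell$-freeness says nothing about linear cycles of other lengths, so it is not enough to find \emph{some} short closed structure --- the oversaturated layer must be analyzed finely enough to yield a linear cycle of the exact prescribed length and parity, while tracking which hyperedge of $G$ each traversed vertex-pair belongs to so that the extracted cycle really is a copy of $C^r_\ell$. A genuinely hypergraph-specific complication also appears: an explored edge has $r$ vertices and may straddle three or more BFS levels, so the ``advance fully into the next level'' requirement is not automatic; such chordal edges cannot simply be deleted, since a vertex may carry many of them, and dealing with them --- essentially by folding them back into the same counting and extraction argument --- is the technical heart of the proof. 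Linearity is used throughout: it makes the extracted closed walks automatically linear (non-consecutive edges meet in at most one vertex and lie in disjoint level ranges, hence not at all) and it produces the sharp expansion factor $(r-1)D$ responsible for the exponent $1/m$.
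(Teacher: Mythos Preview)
Your high-level strategy---grow levels from a root, argue that expansion must be fast, and locate a forbidden cycle when it isn't---is the same as the paper's, but the proposal as written has real gaps at exactly the two points you yourself flag, and the paper resolves them by a route quite different from a direct Bondy--Simonovits adaptation.

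First, the ``exact length'' step is not a detail one can defer. Your sketch says that from an oversaturated layer ``one extracts two internally disjoint root-paths of a common length $j\le m$ \dots\ and since the attainable lengths form an interval \dots\ we may arrange $2j=\ell$''. But two BFS root-paths that meet need not be internally disjoint, and even in the graph case the Bondy--Simonovits argument does not work by simply choosing $j$; it requires a delicate analysis of the bipartite graph between consecutive levels to manufacture a $C_{2m}$ specifically. In the hypergraph setting you must in addition guarantee that the extracted closed walk is \emph{linear}: non-consecutive edges of your candidate cycle may share an expansion vertex even if their ``skeleton'' endpoints lie in disjoint level ranges, so ``disjoint level ranges $\Rightarrow$ disjoint edges'' is false without further control. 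The paper handles this not by BFS but by building a \emph{leveled linear tree} (Section~4): a random $m$-partition of $V(G)$ forces every exploration edge to lie entirely in one prescribed part, eliminating your ``straddling'' problem at the outset, and then an inductive lemma (Lemma~\ref{even-expansion}) shows that from any large set $E$ of outgoing edges one can extract a constant fraction $E^*$ whose off-level parts form a matching---or else a $C^r_{2m}$ appears. The cycle is found via a strongly-rainbow path in an auxiliary bipartite $2$-graph with a carefully designed edge-colouring (Lemmas~\ref{default-coloring}, \ref{rainbow-path}), which is what certifies linearity of the resulting $r$-uniform cycle.

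Second, your odd-cycle plan (root at an edge $e_0$ and close up through it) is underdeveloped and is not what the paper does. The paper's odd case (Section~6) is substantially harder than the even case: leveled linear trees are replaced by \emph{leveled linear quasi-trees}, which have interleaved main and companion levels and lack a clean tree structure, so one cannot trace paths back to a single root. The key new idea is a ``spider'' lemma (Lemma~\ref{spider}) that, given many neighbours of a vertex $y$ in the last level, locates a \emph{dominator} $x$ at some earlier level from which $p=2mr$ internally disjoint monotone paths run to those neighbours; these dominators serve as local roots and let one close up a cycle of the exact odd length $2m+1$ (Claims~2 and~3 in Lemma~\ref{odd-expansion}). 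Nothing analogous to this untangling device appears in your sketch, and without it I do not see how your edge-rooted BFS produces a linear $C^r_{2m+1}$ rather than merely some odd closed walk.

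In short: the expansion framework is right, but the two obstacles you name are the whole difficulty, and the paper overcomes them with machinery (random level-partition, strongly-rainbow colourings, quasi-trees, dominators) that your proposal does not supply.
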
 
Another motivation for our study of $ex_L(n,C^r_\ell)$ comes from
the study of the hypergraph Ramsey number $R(C^r_\ell, K^r_t)$ 
of a linear cycle versus a complete graph.
Such a study was initiated by Kostochka, Mubayi, and Verstra\"ete in \cite{KMV}.
Using Theorem \ref{main} and other tools, we obtain nontrivial upper bounds
on $R(C^r_\ell, K^r_t)$. Since our main emphasis of the paper is on 
the linear Tur\'an problem of linear cycles, 
we delay the discussion of the related Ramsey numbers to Section \ref{cycle-complete}.

The rest of the paper is organized as follows. Section 2 contains some notation and terminology.
Section 3 contains some lemmas needed for our main theorem. Section 4 contains the proof
of the main theorem for even cycles. Section 5 contains some additional tools needed for
the proof for odd cycles. Section 6 contains the proof of the main theorem for odd cycles (which is much more involved than for even cycles). Section 7 contains results on cycle-complete
hypergraph Ramsey numbers. Section 8 contains concluding remarks, including some discussion
on the lower bounds on $ex_L(n,C^r_\ell)$.

Our main method has roots in \cite{FS} and \cite{Jiang-Seiver}, but requires a substantial innovation for the odd cycle case. The new ideas used there could potentially have applications in other problems. 

%%%%%%%%%%%%%%%%%%%%%%%%%%%%%%%%%%%%%%%%%%%%%%%%%%%%%%%%%%%%%

\section{Notation and terminology} \label{notation}

\subsection{Degrees, neighborhoods, link graphs}

Let $G$ be a hypergraph. Given a set $S\subseteq V(G)$, we define the {\it degree}
of $S$ in $G$, denoted by $d_G(S)$, to be the number of edges of $G$ that contain $S$.
Given a vertex $x\in V(G)$, we define the {\it link graph} $\cL_G(x)$ of $x$ in $\cG$ as
$\cL_G(x)=\{e\setminus \{x\}: x\in e\in G\}$. Hence if $G$ is an $r$-graph, then
$\cL_G(x)$ is an $(r-1)$-graph. The {\it neigborhood} $N_G(x)$ of $x$ in $G$ is defined
as $N_G(x)=\{u\in V(G): d_G(\{u,x\})\geq 1\}$. When the context is clear, we will drop the subscripts in the above definitions.

\medskip

\subsection{$r$-expansions}

Let $k,r$ be position integers where $r>k\geq 2$. Given a $k$-graph $H$ 
the {\it $r$-expansion} of $H$, denoted by $H^{(r)}$, is the $r$-graph obtained from $H$
enlarging each edge $e$ of $H$ into an $r$-set through a set $A_e$ of $r-k$ new vertices, called {\it expansion vertices}, such that whenever $e\neq e'$ we have $A_e\cap A_{e'}=\emptyset$. So, for instance, the $r$-expansion of a $2$-uniform $\ell$-cycle
is precisely an $r$-uniform linear $\ell$-cycle. We will call $H$ the {\it skeleton} of
$H^{(r)}$.

\subsection{Leveled linear trees}

Given a $2$-uniform tree $T$ rooted at $w$,  $\forall i\geq 0$, let $L_i=\{x: dist_T(w,x)=i\}$.
We call $L_i$ {\it level $i$}. The {\it height} of $T$ is the maximum $i$ for which $L_i\neq \emptyset$. For each $x\in V(T)$, let $T_x$ denote the subtree of $T$ under $x$.
Let $H=T^{(r)}$. Let $f$ be a specific mapping of $T$ to $H$ that maps each $e\in T$
to $e\cup A(e)$ where $A(e)$ is the set of expansion vertices for $e$.
We call $H$ a {\it leveled linear $r$-tree} rooted at $w$ and
will refer to the $L_i$'s as {\it  levels} of $H$. The height of $H$ is defined
to be the height of $T$. If $x$ is a vertex in $L_i$ for some $i$, then
the {\it subtree under $x$} in $H$, denoted by $H_x$, is the image under $f$
of $T_x$ in $H$. %So the edges of $H_x$ are those in $H$ corresponding to edges of $T_x$.

\subsection{Proper, rainbow, strongly proper, strongly rainbow edge-colorings}

Let $c$ be an edge-coloring of a $2$-graph $G$ using natural numbers.
We say that $c$ is {\it proper} if whenever $e$ and $e'$ are incident edges in $G$, $c(e)\neq c(e')$ and we say that $c$ is {\it rainbow} if for every two different edges $e$ and $e'$
in $G$ we have $c(e)\neq c(e')$.  Let $\phi$ be an edge-coloring of a $2$-graph $G$
using $p$-subsets of some ground set $S$. We say that $\phi$ is {\it strongly proper}
if whenever $e$ and $e'$ are incident edges in $G$, $c(e)\cap c(e')=\emptyset$.
We say that $\phi$ is {\it strongly rainbow} if for every two different edges $e$ and
$e'$ in $G$ we have $c(e)\cap c(e')=\emptyset$.

\subsection{Default edge-colorings}

Let $G$ be an $r$-graph.  The $2$-shadow $\partial_2(G)$ of $G$ is the $2$-graph
consisting of all pairs $(a,b)$ that are contained in some edge of $G$. If $G$ is linear then
each edge in $\partial_2(G)$ is contained in a unique edge of $G$. We define the {\it default
edge-coloring} $\phi$ of $\partial_2(G)$ by letting  $\phi(\{a,b\})=e\setminus \{a,b\}$, where
$e$ is the unique edge of $G$ containing $\{a,b\}$.  So $\phi$ is a coloring whose colors are $(r-2)$-sets. If $B\subseteq \partial_2(G)$ then the default edge-coloring of $B$ is defined to be $\phi$ restricted to $B$.

%%%%%%%%%%%%%%%%%%%%%%%%%%%%%%%%%%%%%%%%%%%%%%%%%%%%%%%%%%%%%

\section{Lemmas}

In this section, we prove some lemmas that will be needed in our main proofs.
Let $H$ be a hypergraph.
A {\it vertex cover} of $H$ is a set $Q$ of vertices in $H$ that contains
at least one vertex of each edge of $H$. 
A {\it cross-cut} of $H$ is a set $S$ of vertices in $H$ that contains exactly one
vertex of each edge of $H$.  A {\it matching} in $H$ is a set of
pairwise disjoint edges. The {\it size} of a matching is the number of edges in it.
 
\begin{lemma} \label{matching-cover}
Let $H$ be a $k$-graph, where $k\geq 2$. Let $Q$ be a minimum vertex cover of $H$.
Then $H$ contains a matching of size at least $|Q|/k$.
\end{lemma}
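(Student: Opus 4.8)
The plan is to prove this by a standard greedy/maximality argument on matchings. First I would let $M = \{e_1, \dots, e_s\}$ be a \emph{maximum} matching in $H$, so that $s$ is the matching number of $H$. The key observation is that the vertex set $\bigcup_{i=1}^{s} e_i$, which has size exactly $ks$ since the $e_i$ are pairwise disjoint and each has size $k$, must be a vertex cover of $H$: if some edge $f$ of $H$ avoided all of these $ks$ vertices, then $M \cup \{f\}$ would be a larger matching, contradicting maximality.

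Next, since $Q$ is a \emph{minimum} vertex cover and $\bigcup_{i=1}^{s} e_i$ is \emph{some} vertex cover, we get $|Q| \le ks$. Rearranging gives $s \ge |Q|/k$, which is exactly the claimed bound: $H$ contains a matching (namely $M$) of size at least $|Q|/k$. That completes the argument.

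The proof is essentially immediate once one thinks to compare the minimum vertex cover against the cover arising from a maximum matching, so I do not anticipate a genuine obstacle. The only point requiring a (trivial) remark is the uniformity hypothesis $k \ge 2$ and the fact that all edges have size exactly $k$, which is what makes $|\bigcup e_i| = ks$ rather than merely $\le ks$; in fact even the inequality $|\bigcup e_i| \le ks$ suffices for the conclusion, so the argument is robust. One could alternatively phrase it via a direct greedy construction — repeatedly pick an edge, delete its $k$ vertices, and iterate — but invoking a maximum matching is cleaner and avoids bookkeeping.
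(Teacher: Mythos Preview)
Your proof is correct and follows essentially the same approach as the paper: take a maximum matching $M$, observe that its vertex set is a vertex cover of size $k|M|$, and compare with the minimum vertex cover $Q$ to obtain $|M|\geq |Q|/k$.
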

\begin{proof}
Let $M$ be a maximum matching in $H$ and $S$ the set of vertices contained
in edges of $M$. If some edge $e$ of $H$ contains no vertex in $S$ then $M\cup e$
is a larger matching in $H$ than $M$, contradicting our choice of $M$. So
$S$ is a vertex cover of $H$ of size $k|M|$. Since $Q$ is a minimum vertex cover of $H$,
we have $k|M|\geq |Q|$. Thus, $|M|\geq |Q|/k$.
\end{proof}

\begin{lemma} \label{cross-cut}
Let $H$ be a $k$-graph, where $k\geq 2$. Let $S$ be a vertex cover of $H$.
Then there exist a subgraph $H'\subseteq H$ and a subset $S'\subseteq S$
such that $|H'|\geq \frac{k}{2^k}|H|$ and that $S'$ is a cross-cut of $H'$.
\end{lemma}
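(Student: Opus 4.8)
The goal is to find a large subgraph $H'$ of $H$ together with a subset $S'$ of $S$ that hits every edge of $H'$ in exactly one vertex. Since $S$ is a vertex cover, every edge of $H$ has at least one vertex in $S$. The obstruction to $S$ itself being a cross-cut is that some edges may meet $S$ in two or more vertices. The plan is to thin out $S$ randomly so that, in expectation, many edges survive with exactly one chosen vertex.

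Concretely, I would include each vertex of $S$ independently in a random set $S'$ with probability $p$, to be optimized. For a fixed edge $e$ of $H$, let $a_e = |e \cap S| \ge 1$. The edge $e$ has exactly one vertex in $S'$ with probability $a_e p (1-p)^{a_e - 1}$; call $e$ \emph{good} (for this choice of $S'$) in that case, and let $H'$ be the set of good edges, so $S'$ is by construction a cross-cut of $H'$. Since $1 \le a_e \le k$ and the function $x \mapsto x p(1-p)^{x-1}$ is at least $p(1-p)^{k-1}$ for all integers $x \in [1,k]$ when $p \le 1/k$ (each factor $a_e \ge 1$ and $(1-p)^{a_e-1} \ge (1-p)^{k-1}$), we get $\Pr[e \text{ good}] \ge p(1-p)^{k-1}$. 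Hence $\mathbb{E}[|H'|] \ge p(1-p)^{k-1} |H|$, and choosing $p = 1/k$ gives $\mathbb{E}[|H'|] \ge \frac{1}{k}\left(1-\frac1k\right)^{k-1}|H| \ge \frac{1}{k}\cdot\frac{1}{e}\,|H|$. This already yields a bound of the right shape; I would then either (a) observe that $(1-1/k)^{k-1} \ge (1-1/k)^{k} \cdot \frac{k}{k-1} \ge \cdots$ can be pushed to beat $k/2^k$ for all $k \ge 2$, or more simply (b) optimize the crude bound differently: using $p(1-p)^{k-1} \ge p - (k-1)p^2$ is not tight enough, so instead I would just verify directly that with $p=1/2$ one has $p(1-p)^{k-1} = (1/2)^k$, which already gives $\mathbb{E}[|H'|] \ge 2^{-k}|H| \ge \frac{k}{2^k}\cdot\frac{|H|}{k}$ — not quite the claimed constant. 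The cleanest route to the stated constant $\frac{k}{2^k}$ is to note that for $p = 1/k$ and $k \ge 2$ we have $\frac1k(1-\tfrac1k)^{k-1} \ge \frac{k}{2^k}$, which reduces to the elementary inequality $(1-1/k)^{k-1} \ge k^2/2^k$; this holds for $k=2$ with equality ($1/2 = 4/8$... actually $1/2 > 1/2$), and for larger $k$ one checks the left side decreases to $1/e$ while the right side decreases to $0$, so a finite check plus monotonicity finishes it.

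Finally, since the expected size of $H'$ is at least $\frac{k}{2^k}|H|$, some particular choice of $S'$ achieves at least this value; fix that $S'$ and the corresponding $H'$, and we are done.

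I expect the only real subtlety to be the last numerical step: pinning down the constant so that $p(1-p)^{k-1} \ge k/2^k \cdot (1/k)$ holds for \emph{all} $k \ge 2$ rather than just asymptotically. This is a routine but slightly fiddly comparison of $(1-1/k)^{k-1}$ with an explicit elementary function of $k$; everything else (the independent-rounding setup, the per-edge probability computation, and the expectation/averaging argument) is standard.
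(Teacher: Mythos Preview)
Your approach is exactly the paper's: include each vertex of $S$ in a random set independently with some probability $p$, let $H'$ consist of the edges meeting that set in exactly one vertex, and average. The only difference is in the choice of $p$ and in how you bound the per-edge success probability, and this is where your argument slips.

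You lower-bound $a_e\, p(1-p)^{a_e-1}$ by $p(1-p)^{k-1}$, discarding the factor $a_e$. With that bound no choice of $p$ recovers the constant $k/2^k$ for small $k$; in particular your proposed inequality $(1-1/k)^{k-1}\ge k^2/2^k$ reads $1/2\ge 1$ at $k=2$, so the ``finite check plus monotonicity'' plan fails at the very first case. The paper simply takes $p=\tfrac12$ and \emph{keeps} the factor $a_e$: the per-edge success probability is then exactly $a_e/2^{a_e}$, and since the map $x\mapsto x/2^x$ is nonincreasing on the positive integers (the ratio at consecutive integers is $(x+1)/(2x)\le 1$), one gets $a_e/2^{a_e}\ge k/2^k$ for every $1\le a_e\le k$. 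That single monotonicity observation replaces your entire numerical discussion and yields the stated constant immediately.
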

\begin{proof}
Let $\wt{S}$ be a random subset of $S$ with each vertex of $S$ chosen independently with
probability $\frac{1}{2}$. For each $e\in H$, the probability that
exactly one vertex of $e\cap S$ is included in $\wt{S}$ is $\frac{|e\cap S|}{2^{|e\cap S|}}\geq \frac{k}{2^k}$.
So the expected number of edges $e$ that intersects $\wt{S}$ in exactly one vertex is at least
$\frac{k}{2^k}|H|$. Thus, there exists a subset $\wt{S}$ of $S$ such that at least $\frac{k}{2^k}|H|$ edges intersect $S'$ in exactly one vertex. Let $H'$ denote the subgraph of $H$ consisting of these edges and $S'=\wt{S}\cap V(H')$. The claim follows.
\end{proof}

\begin{lemma} \label{default-coloring}
Let $r\geq 3$. Let $G$ be a linear $r$-graph. Let $B\subseteq \partial_2(G)$ satisfy that
each edge of $G$ contains at most one edge of $B$.
Let $\phi$ be the default edge-coloring of $B$. Then $\phi$ is strongly proper.
\end{lemma}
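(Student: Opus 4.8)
The plan is to unwind the definitions and do a short case analysis. Take two incident edges $f,f'$ of $B$, say $f=\{a,b\}$ and $f'=\{a,c\}$ with common vertex $a$ (note $b\neq c$ since $f\neq f'$). Because $G$ is linear, $f$ lies in a unique edge $e$ of $G$ and $f'$ lies in a unique edge $e'$ of $G$, and by definition of the default coloring $\phi(f)=e\setminus\{a,b\}$ and $\phi(f')=e'\setminus\{a,c\}$. I want to show $\phi(f)\cap\phi(f')=\emptyset$.

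First I would rule out $e=e'$. If $e=e'$, then this single edge of $G$ contains both $f$ and $f'$, two distinct edges of $B$, contradicting the hypothesis that each edge of $G$ contains at most one edge of $B$. So $e\neq e'$.

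Now, since $G$ is linear and $e\neq e'$, we have $|e\cap e'|\le 1$; as $a\in e\cap e'$, this forces $e\cap e'=\{a\}$. Hence $\phi(f)\cap\phi(f')=(e\setminus\{a,b\})\cap(e'\setminus\{a,c\})\subseteq (e\cap e')\setminus\{a\}=\emptyset$. Since $f,f'$ were arbitrary incident edges of $B$, $\phi$ is strongly proper.

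There is no real obstacle here; the only thing to be careful about is which hypothesis does which job. Linearity alone is not enough (it only tells us edges of $G$ meet in $\le 1$ vertex, which handles the $e\neq e'$ case), and the extra assumption "at most one edge of $B$ per edge of $G$" is exactly what is needed to exclude the degenerate case $e=e'$, where two incident shadow-pairs sit inside the same $r$-edge and would share $r-3\ge 0$ color vertices.
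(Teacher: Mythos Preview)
Your proof is correct and matches the paper's argument essentially line for line: both use the hypothesis on $B$ to get $e\neq e'$, then linearity of $G$ together with $a\in e\cap e'$ to force $e\cap e'=\{a\}$, giving $\phi(f)\cap\phi(f')=\emptyset$. Your write-up is in fact a bit more explicit than the paper's about where each hypothesis is used.
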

\begin{proof}
Let $f_1,f_2$ be two edges in $B$ that share a vertex, say $u$. 
Let $e_1,e_2$ be the unique edges of $G$ containing $f_1,f_2$ respectively. By our assumption,
$e_1\neq e_2$.
If $e_1\setminus f_1$ and $e_2\setminus f_2$ share a vertex $v$, then $e_1,e_2$ both contain $\{u,v\}$, contradicting $G$ being linear.
Thus $\phi(\{a,b\})\cap\phi(\{a,c\})=\emptyset$.
\end{proof}

\begin{lemma} \label{rainbow-path}
Let $k,\ell,s$ be positive integers, where $k\geq 2$.
Let $G$ be a $2$-graph with minimum degree at least $(k+1)\ell+s$.
Let $\phi$ be a strongly proper edge-coloring of $G$ using $k$-subsets of some set $S$. 
Let $x \in V(G)$ and $S_0\subseteq S$ with $|S_0|\leq s$. 
Then there exists a path $P$ in $G$ of length $\ell$
starting at $x$ such that (1) $P$ 
is strongly rainbow under $\phi$ and (2) $\forall f\in V(P)$,
$\phi(f)\cap S_0=\emptyset$.
\end{lemma}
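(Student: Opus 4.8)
The plan is to construct the path $P$ greedily, one edge at a time, maintaining an invariant that controls which colors have already been "used up." At each step the key point is that a strongly proper coloring behaves locally like a proper coloring with respect to the set of forbidden colors, so the set of forbidden colors grows by at most $k$ per step, and the high minimum degree guarantees a legal continuation.

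More precisely, I would build vertices $x=v_0,v_1,\dots,v_\ell$ and edges $f_i=v_{i-1}v_i$ as follows. Before choosing $f_i$, let $T_{i-1}=S_0\cup\phi(f_1)\cup\cdots\cup\phi(f_{i-1})$ be the set of "bad" colors accumulated so far; note $|T_{i-1}|\le s+(i-1)k\le s+(\ell-1)k$. I want to pick a neighbor $v_i$ of $v_{i-1}$, distinct from $v_0,\dots,v_{i-1}$, with $\phi(f_i)\cap T_{i-1}=\emptyset$. The obstructions to choosing $v_i$ are: (a) edges at $v_{i-1}$ whose color meets $T_{i-1}$, and (b) the $i-1$ edges that would revisit a previous vertex. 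For (b) there are at most $i-1\le \ell-1$ forbidden neighbors. For (a), since $\phi$ is strongly proper, the color of the edge $f_{i-1}=v_{i-2}v_{i-1}$ is disjoint from the color of every other edge at $v_{i-1}$; more to the point, I claim that for each color $c\in T_{i-1}$, at most... actually the cleaner bound is this: for a fixed vertex $w=v_{i-1}$ and a fixed set $T$ of colors, the number of edges $e\ni w$ with $\phi(e)\cap T\ne\emptyset$ is at most $|T|$, because the edges at $w$ receive pairwise disjoint colors (strong properness at $w$), so each element of $T$ lies in the color of at most one edge at $w$. Hence the number of "bad" edges at $v_{i-1}$ of type (a) is at most $|T_{i-1}|\le s+(\ell-1)k$. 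Combining (a) and (b), the total number of forbidden choices is at most $s+(\ell-1)k+(\ell-1) = (k+1)(\ell-1)+s < (k+1)\ell + s \le \deg(v_{i-1})$, so a valid $v_i$ exists.

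Once all $\ell$ edges are chosen this way, the resulting $P$ is a genuine path of length $\ell$ starting at $x$ (no repeated vertices by construction). Property (2) holds because each $f_i$ was chosen with $\phi(f_i)\cap S_0=\emptyset$ (as $S_0\subseteq T_{i-1}$ for every $i$). Property (1), strong rainbowness, holds because when we chose $f_i$ we required $\phi(f_i)\cap T_{i-1}=\emptyset$, and $T_{i-1}\supseteq\phi(f_1)\cup\cdots\cup\phi(f_{i-1})$, so $\phi(f_i)$ is disjoint from every earlier color; applying this for all $i$ gives pairwise disjointness of all the $\phi(f_i)$.

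The only slightly subtle point — the "main obstacle" — is the counting in step (a): one must observe that strong properness \emph{at the single vertex} $v_{i-1}$ forces the colors of the edges incident to $v_{i-1}$ to be pairwise disjoint $k$-sets, so a set $T$ of colors can "block" at most $|T|$ of these edges (and in fact the edge $f_{i-1}$ itself is automatically blocked but that is already counted). Everything else is a routine induction on the length built into the greedy construction; no global structure of $G$ beyond the minimum-degree hypothesis is needed.
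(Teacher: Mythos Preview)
Your proposal is correct and follows essentially the same approach as the paper: both arguments greedily extend the path one edge at a time, using the observation that strong properness forces the colors on edges incident to a fixed vertex to be pairwise disjoint $k$-sets, so a forbidden set $T$ can block at most $|T|$ such edges. The paper phrases this as an induction on $\ell$ (apply the hypothesis to get a path of length $\ell-1$, then extend by one edge), while you unroll it into an explicit iterative construction with the running forbidden set $T_{i-1}$; the counting is identical in spirit and your version is in fact slightly tighter at intermediate steps.
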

\begin{proof}
We use induction on $\ell$. For the basis step, let $\ell=1$. By our assumption, there
are at least $k+s+1$ edges of $G$ incident to $x$. Since $\phi$ is strongly proper,
the colors used on these edges are pairwise disjoint $k$-sets. Certainly one of them is
completely disjoint from $S_0$. Let $e$ be an edge incident to $x$ with $\phi(e)\cap S_0=\emptyset$. The claim holds with $P=e$.
For the induction step, let $\ell>1$.
By induction hypothesis, there is a path $P$ of length $\ell-1$ starting at $x$ such
that  (1) $P$ is strongly rainbow under $\phi$ and (2) $\forall f\in P, \phi(f)\cap S_0=\emptyset$.  Let $S_1=\bigcup_{f\in P} \phi(f)$. Then $|S_1|=k(\ell-1)$.
Let $y$ denote the other endpoint of $P$.
There at least $(k+1)\ell+s$ edges incident to $y$. More than $k\ell+s$ of these join $y$ 
to vertices outside $P$. Since  $\phi$
is strongly proper, the colors on these edges are pairwise disjoint $k$-subsets of $S$. 
Since $k\ell+s>k(\ell-1)+s=|S_0\cup S_1|$, for one of these edges $e$, we have $\phi(e)\cap (S_0\cup S_1)=\emptyset$.
Now, $P\cup e$ is a path of length $\ell$ in $G$
starting at $x$ such that (1) $P\cup e$ is strongly rainbow under $\phi$
and (2) $\forall f\in P\cup e, \phi(f)\cap S_0=\emptyset$.
\end{proof}

\begin{lemma} \label{subgraph}
Let $G$ be a graph with average degree $d$. There exists a subgraph $G'\subseteq G$ such
that $\delta(G')\geq \frac{d}{4}$ and that $|G'|\geq \frac{|G|}{2}$.
\end{lemma}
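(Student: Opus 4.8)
The plan is to run the classical greedy ``peeling'' argument: repeatedly delete a vertex of small current degree, and show that too few edges are lost for this to wipe out the graph. First I would fix the convention that $|G|$ counts edges, matching the usage of $|H|$ in Lemmas~\ref{cross-cut} and~\ref{default-coloring}; thus $d = 2|G|/n$ where $n = |V(G)|$. If $|G| = 0$ the statement is vacuous (take $G' = G$), so I may assume $d > 0$.

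Next, define the process: as long as the current graph contains a vertex of degree less than $d/4$, delete one such vertex together with its incident edges; let $G'$ be the graph at which the process halts. By the stopping rule, every vertex of $G'$ has degree at least $d/4$ in $G'$, so $\delta(G') \ge d/4$ --- provided $G'$ is nonempty, which still has to be checked.

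To control the loss, note that each deletion removes strictly fewer than $d/4$ edges and the process makes at most $n$ deletions, so the total number of edges removed is less than $n \cdot \tfrac{d}{4} = \tfrac{nd}{4} = \tfrac{|G|}{2}$. Hence $|G'| > |G| - \tfrac{|G|}{2} = \tfrac{|G|}{2}$, which in particular shows $G'$ contains an edge and is therefore nonempty. This removes the caveat from the previous paragraph and yields both conclusions at once.

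There is essentially no hard step here; the only point needing care is the calibration of the threshold. Using $d/4$ rather than the $d/2$ of the usual ``dense subgraph'' lemma is exactly what makes the edge-loss estimate come out to $nd/4 = |G|/2$, so that minimum degree $d/4$ and retention of at least half the edges are obtained simultaneously; with threshold $d/2$ the same count would only bound the loss by $|G|$, which is useless. A minor secondary subtlety is that $d/4$ need not be an integer, but this is harmless: ``no vertex of degree $< d/4$'' is the same as ``every degree is $\ge \lceil d/4 \rceil \ge d/4$,'' so the stated bound holds as written.
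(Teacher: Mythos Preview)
Your proof is correct and follows exactly the same greedy peeling argument as the paper: iteratively delete vertices of degree below $d/4$, count that fewer than $nd/4 = |G|/2$ edges are removed, and conclude $|G'|\ge |G|/2$ and $\delta(G')\ge d/4$. The extra remarks you make about threshold calibration and non-integrality of $d/4$ are sound but go beyond what the paper bothers to spell out.
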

\begin{proof}
Suppose $G$ has $n$ vertices. Iteratively remove a vertex (and its incident edges) whose degree in the remaining subgraph is less than $\frac{d}{4}$ until no such vertex exists. Let $G'$ denote the remaining subgraph. In the process, fewer than $\frac{nd}{4}\leq \frac{1}{2}|G|$ edges have been removed.
So $|G'|\geq \frac{|G|}{2}$. In particular, $G'$ is nonempty. By our rule, we also have
$\delta(G')\geq \frac{d}{4}$.
\end{proof}

\begin{lemma} \label{min-degree}
Let $G$ be an $r$-graph with average degree $d$. Then $G$ contains
a subgraph $G'$ with $\delta(G')\geq d/r$.
\end{lemma}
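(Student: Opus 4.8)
The plan is to mimic the greedy vertex-deletion argument used in Lemma~\ref{subgraph}, but with the threshold $d/r$ in place of $d/4$, and with a cleaner counting bound that exploits the fact that in an $r$-graph the degree sum equals $r$ times the number of edges.

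First I would fix notation: let $n=|V(G)|$ and $m=|E(G)|$. Since each edge of an $r$-graph contains exactly $r$ vertices, $\sum_{v\in V(G)} d_G(v)=rm$, so the hypothesis says $d=rm/n$, and hence the target minimum degree is $d/r=m/n$. (If $d=0$ the statement is trivial, so assume $d>0$.)

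Next I would run the obvious process: set $G_0=G$; as long as the current $r$-graph $G_i$ contains a vertex $v$ with $d_{G_i}(v)<d/r$, delete $v$ together with all edges through it to form $G_{i+1}$. Let $G'$ be the $r$-graph obtained when the process halts (it must halt, since each step reduces the number of vertices). By the halting condition, every vertex of $G'$ has degree at least $d/r$ in $G'$, so it only remains to show that $G'$ is nonempty.

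The key step is the counting argument: each deletion removes strictly fewer than $d/r$ edges, and at most $n$ deletions occur, so the total number of edges removed is strictly less than $n\cdot(d/r)=m$. Hence $G'$ still contains at least one edge, and in particular is nonempty, which completes the proof. There is essentially no real obstacle here; the only point requiring care is that the threshold $d/r$ must be frozen at its initial value throughout the process, so that the bound $n\cdot(d/r)=m$ remains available at the end. Note that, unlike Lemma~\ref{subgraph}, we make no claim on $|E(G')|$, and it is precisely the absence of that requirement that lets us push the minimum degree all the way up to $d/r$.
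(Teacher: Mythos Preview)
Your proof is correct and follows essentially the same greedy vertex-deletion argument as the paper's own proof. The only cosmetic difference is that you delete vertices of degree strictly less than $d/r$ (yielding the clean bound ``fewer than $n\cdot d/r=m$ edges removed''), whereas the paper deletes vertices of degree at most $d/r$; your version is arguably the tidier of the two, but the idea is identical.
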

\begin{proof}
Suppose $G$ has $n$ vertices.
Starting with $G$, whenever some 
vertex has at most $d/r$ in the remaining graph, we remove this vertex
and  all the edges in the remaining graph that contains this vertex.
We repeat this procedure until there is no such vertex left. Let $G'$ denote
the remaining graph. Clearly by our procedure at most $(n-1)(d/r)<nd/r=e$ edges have been
removed in the process. So $G'$ is nonempty. Also, by our condition, $\delta(G')\geq d/r$.
\end{proof}

Below  we give a version of the Chernoff bound from \cite{MR}. 
\begin{lemma} {\bf (Chernoff bound)} \label{chernoff}
Let $X$ be the sum of $n$ independent random variables $X_1,\ldots, X_n$, where
for each $i\in [n]$, $\pr(X_i=1)=p$ and $\pr(X_i=0)=1-p$. Then for any real $0\leq \alpha\leq 1$
$$\pr(|X-np|>\alpha np)<2 e^{-\frac{\alpha^2}{3} np}.$$
\end{lemma}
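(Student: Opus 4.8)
The plan is to use the standard exponential-moment (Bernstein) method, bounding the upper and lower deviations separately and then combining them by a union bound. For the upper tail, fix a parameter $t>0$ and apply Markov's inequality to the nonnegative random variable $e^{tX}$. Since the $X_i$ are independent,
$$\pr\big(X>(1+\alpha)np\big)\le e^{-t(1+\alpha)np}\,\mE[e^{tX}]=e^{-t(1+\alpha)np}\prod_{i=1}^n\mE[e^{tX_i}].$$
Each factor equals $1-p+pe^t\le \exp\!\big(p(e^t-1)\big)$ by the inequality $1+x\le e^x$, so $\mE[e^{tX}]\le \exp\!\big(np(e^t-1)\big)$, and the optimal choice $t=\ln(1+\alpha)$ gives the bound $\big(e^{\alpha}(1+\alpha)^{-(1+\alpha)}\big)^{np}$.

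The next step is the elementary one-variable estimate $e^{\alpha}(1+\alpha)^{-(1+\alpha)}\le e^{-\alpha^2/3}$ for $0\le\alpha\le1$, i.e.\ $(1+\alpha)\ln(1+\alpha)-\alpha\ge\alpha^2/3$. This follows because $g(\alpha):=(1+\alpha)\ln(1+\alpha)-\alpha-\alpha^2/3$ satisfies $g(0)=0$ and $g'(\alpha)=\ln(1+\alpha)-\tfrac{2}{3}\alpha\ge0$ on $[0,1]$ (one checks $g'(0)=0$, that $g'$ is first increasing then decreasing on $[0,1]$, and $g'(1)=\ln 2-\tfrac23>0$). Hence $\pr(X>(1+\alpha)np)<e^{-\alpha^2np/3}$.

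For the lower tail one may assume $\alpha<1$ (the case $\alpha=1$ being vacuous as $X\ge0$), and repeating the argument with $t=\ln(1-\alpha)<0$ gives $\pr(X<(1-\alpha)np)\le\big(e^{-\alpha}(1-\alpha)^{-(1-\alpha)}\big)^{np}$. The companion estimate $-\alpha-(1-\alpha)\ln(1-\alpha)\le-\alpha^2/2$ on $[0,1)$ (again by differentiating: $h(\alpha):=(1-\alpha)\ln(1-\alpha)+\alpha-\alpha^2/2$ has $h(0)=0$ and $h'(\alpha)=-\ln(1-\alpha)-\alpha\ge0$) yields $\pr(X<(1-\alpha)np)<e^{-\alpha^2np/2}\le e^{-\alpha^2np/3}$. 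Adding the two tail probabilities gives $\pr(|X-np|>\alpha np)<2e^{-\alpha^2np/3}$, as claimed.

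The probabilistic content is entirely routine; the only real work is in the two calculus inequalities, and the one subtlety is that a single constant $\tfrac13$ must cover both tails over the whole range $\alpha\in[0,1]$ — it is forced by the upper-tail bound near $\alpha=1$, which is why one settles for the slightly lossy exponent $\tfrac13$ rather than $\tfrac12$.
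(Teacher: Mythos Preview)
Your proof is correct and is the standard moment-generating-function derivation of the Chernoff bound. The paper does not actually prove this lemma: it is simply quoted from Molloy and Reed \cite{MR} without argument, so there is no in-paper proof to compare against. Your write-up supplies exactly the textbook justification one would expect, and the two calculus estimates you verify are handled cleanly.
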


Recall that given a hypergraph $G$ and a vertex $x$, the {\it link graph} $L_G(x)$ of $x$ in $G$
is the graph $\{e\setminus \{x\}, e\in G, x\in e \}$. Given set $S$ of vertices in $G$, the
{\it subgraph $G[S]$ of $G$ induced by $S$} is the graph with vertex set $S$ and edge
set $\{e: e\in G, e\subseteq S\}$.

\begin{proposition} \label{split}
Let $c>0$ be a fixed real. Let $m,r,t\geq 2$ be fixed positive integers. There exists a positive integer $n_0$ depending on $c,m,r,t$ such that for all $n\geq n_0$ the following holds.
Let $G$ be a linear $r$-graph with $\delta(G)\geq  cn^{\frac{1}{m}}$. Then there exists
a partition of $V(G)$ into $t$ sets $S_1,\ldots, S_t$ such that 
for each $u\in V(G)$ and each $i\in [t]$, $|L_G(u)\cap G[S_i]|\geq \frac{c}{2t^{r-1}}n^{\frac{1}{m}}$.
\end{proposition}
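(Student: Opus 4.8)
The plan is to use a random partition together with the Chernoff bound (Lemma \ref{chernoff}) and a union bound over all vertices $u$ and all parts $i$. First I would fix $u \in V(G)$ and examine its link graph $L_G(u)$, which is a $2$-graph (since $G$ is $r$-uniform) on at most $n$ vertices with $e(L_G(u)) = d_G(u) \geq c n^{1/m}$ edges; note that since $G$ is linear, $L_G(u)$ has no multiple edges, so its edges correspond to the $(r-1)$-sets $e \setminus \{u\}$ for $e \ni u$. Next I would assign each vertex $v \in V(G)$ independently and uniformly at random to one of the $t$ parts $S_1,\dots,S_t$. For a fixed edge $e$ of $G$ with $u \in e$, the probability that all $r-1$ other vertices of $e$ land in a fixed part $S_i$ is $(1/t)^{r-1}$, so $\mathbb{E}\big[|L_G(u) \cap G[S_i]|\big] = d_G(u)\,t^{-(r-1)} \geq c\,t^{-(r-1)} n^{1/m}$.

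The technical wrinkle is that the events ``$e \subseteq S_i$'' for different edges $e \ni u$ are not independent, since distinct edges through $u$ can share vertices other than $u$ — in fact in a linear hypergraph they share \emph{only} $u$, but the vertex $u$ itself is the common vertex whose placement we do not care about here. So actually for fixed $u$ and fixed $i$, the indicator variables $\mathbf{1}[e\setminus\{u\} \subseteq S_i]$ over edges $e \ni u$ depend only on the placements of vertices in $N_G(u) \setminus \{u\}$, and because $G$ is linear these $(r-1)$-sets $e \setminus \{u\}$ are pairwise disjoint. Hence the indicators are \emph{mutually independent} (each involves a disjoint block of vertices), and I can apply the Chernoff bound directly with $p = t^{-(r-1)}$, $np = d_G(u)\,t^{-(r-1)}$, and $\alpha = 1/2$: the probability that $|L_G(u) \cap G[S_i]|$ falls below half its expectation is at most $2\exp\!\big(-\tfrac{1}{12}\,d_G(u)\,t^{-(r-1)}\big) \leq 2\exp\!\big(-\tfrac{c}{12 t^{r-1}} n^{1/m}\big)$.

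Finally I would take a union bound over the $n$ choices of $u$ and the $t$ choices of $i$: the probability that some pair $(u,i)$ is bad is at most $nt \cdot 2\exp\!\big(-\tfrac{c}{12 t^{r-1}} n^{1/m}\big)$, which tends to $0$ as $n \to \infty$ since $n^{1/m}$ dominates $\log(nt)$. So for $n \geq n_0 = n_0(c,m,r,t)$ there is a choice of partition for which every $|L_G(u) \cap G[S_i]| \geq \tfrac{1}{2} d_G(u) t^{-(r-1)} \geq \tfrac{c}{2 t^{r-1}} n^{1/m}$, as desired. The only point requiring care — and the one I would state explicitly in the write-up — is the independence claim in the previous paragraph, which is exactly where linearity of $G$ is used; without it the $(r-1)$-sets through $u$ could overlap and one would need a concentration inequality for dependent variables instead.
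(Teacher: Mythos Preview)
Your proposal is correct and follows essentially the same argument as the paper's proof: a uniformly random $t$-partition, the observation that linearity makes the $(r-1)$-sets $e\setminus\{u\}$ pairwise disjoint so the indicators are independent, Chernoff with $\alpha=\tfrac12$, and a union bound over the $nt$ pairs $(u,i)$. One small slip: $L_G(u)$ is an $(r-1)$-graph, not a $2$-graph in general, but you treat its edges correctly as $(r-1)$-sets so this does not affect the argument.
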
 
\begin{proof}
Independently and uniformly at random assign each vertex in $G$ a color from $[t]$.
For each $i\in [t]$ let $S_i$ be the set of vertices receiving color $i$.
For each $u\in V(G), i\in [t]$, let $Y_{u,i}$ be the random variable that counts the
number of edges in $L_G(u)$ completely contained in $S_i$. For fixed $u,i$,
clearly each edge of $L_G(u)$ has probability $\frac{1}{t^{r-1}}$ of being contained in $S_i$.
Since $G$ is a linear $r$-graph, the edges of $L_G(u)$ are pairwise vertex-disjoint.
So $Y_{u,i}$ is the sum of $d(u)$ independent random variables each of which equals $1$ with
probability $p=\frac{1}{t^{r-1}}$ and $0$ with probability $1-p$. By Lemma \ref{chernoff},
$$\pr\left(Y_{u,i}<\frac{1}{2}\frac{d(u)}{t^{r-1}}\right)<
\pr\left(|Y_{u,i}-\frac{d(u)}{t^{r-1}}|>\frac{1}{2}\frac{d(u)}{t^{r-1}}\right)<2 \exp\left(-\frac{1}{12}\frac{d(u)}{t^{r-1}}\right).$$
Since $d(u)\geq cn^{\frac{1}{m}}$, this yields
$$\pr\left(Y_{u,i}<\frac{cn^{\frac{1}{m}}}{2t^{r-1}}\right)< 2 \exp\left(-\frac{cn^{\frac{1}{m}}}{12t^{r-1}}\right).$$
Thus, 
$$\pr\left(\exists u\in V(G), \exists i\in [t], Y_{u,i}<\frac{cn^{\frac{1}{m}}}{2t^{r-1}}\right)
<2tn \cdot \exp\left(-\frac{cn^{\frac{1}{m}}}{12 t^{r-1}}\right)<1,$$
for all $n\geq n_0$, where $n_0$ depends only on $c, m, r$, and $t$. 
Thus there exists a particular coloring for which $Y_{u,i}\geq
\frac{cn^{\frac{1}{m}}}{2m^{r-1}}$ for all $u\in V(G)$ and $i\in [t]$. Let $S_1,\ldots, S_t$ 
be the color classes of this coloring. Then $(S_1,\ldots, S_t)$ forms a desired partition.
\end{proof}

%%%%%%%%%%%%%%%%%%%%%%%%%%%%%%%%%%%%%%%%%%%%%%%%%%%%%%%%%%%%%

\section{Linear Tur\'an numbers of $r$-uniform even cycles}

The following lemma provides the  main ingredient of our proof of Theorem \ref{main} for even cycles. 

\begin{lemma} \label{even-expansion}
Let $r,m,h$ be fixed integers, where $r\geq 3, m\geq 2, 0\leq h\leq m-1$.
Let positive integer $i$, let $c_i=\frac{1}{(rm2^{r+2})^i}$.
Let $G$ be a linear $r$-graph such that $C^{r}_{2m}\not\subseteq G$.
Let $H$ be an $r$-uniform leveled linear trees of height $h$ rooted at $w$ that is contained
in $G$. Let $L_0,\ldots, L_h$ denote the levels of $H$. Let $E$ be a set of edges in $G$
each of which contains one vertex in $L_h$ and $r-1$ vertices outside $H$. Suppose
that $|E|\geq (m2^{r+3})^h |L_h|$. Then there exists
a subset $E^*$ of $E$ such that $|E^*|\geq c_h |E|$ and that $E^*\setminus L_h$ is a matching.
In particular, $H\cup E^*$ is a leveled
linear trees of height $h+1$ rooted at $w$, with $L_{h+1}$ consist of one vertex of $e\setminus 
L_h$ for each $e\in E^*$.
\end{lemma}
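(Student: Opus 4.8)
## Proof Proposal for Lemma~\ref{even-expansion}

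The plan is to grow the tree one level at a time inside $E$ by a greedy/averaging argument, using the $C^r_{2m}$-freeness of $G$ to forbid ``collisions'' that would create short cycles. Here is the setup I would use. Each edge $e\in E$ meets $L_h$ in exactly one vertex, call it $b(e)$, and has $r-1$ vertices outside $H$, call this set $T(e)=e\setminus L_h$. To say $E^*\setminus L_h$ is a matching means the sets $T(e)$, $e\in E^*$, are pairwise disjoint; then $H\cup E^*$ is visibly a leveled linear $r$-tree of height $h+1$, with $L_{h+1}$ obtained by picking one vertex of $T(e)$ for each $e\in E^*$. So the real content is: from $|E|\ge (m2^{r+3})^h|L_h|$ edges hanging off $L_h$ we can extract $\ge c_h|E|$ of them whose $T(e)$'s are pairwise disjoint. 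The constant $c_h=(rm2^{r+2})^{-h}$ suggests losing a factor of roughly $rm2^{r+2}$ per level of the pre-existing tree, so the argument should induct on $h$ (the height of $H$), not on anything internal to $E$.

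First I would handle the collisions between new edges. Say two edges $e,e'\in E$ \emph{conflict} if $T(e)\cap T(e')\ne\emptyset$, i.e.\ they share a vertex outside $H$ (they cannot share a vertex inside $H$ other than possibly in $L_h$, and if $b(e)=b(e')$ that is fine for tree-building but I will in any case want distinct base vertices eventually). Since $G$ is linear, a fixed vertex $v\notin H$ lies in at most one edge of $E$ through each vertex of $L_h$; more usefully, if too many edges of $E$ pairwise conflict through vertices near $H$, we can trace a cycle. The key geometric point: if $e_1,e_2\in E$ have $b(e_1)\ne b(e_2)$ and $T(e_1)\cap T(e_2)\ne\emptyset$, then together with the unique tree-path in $H$ from $b(e_1)$ to $b(e_2)$ they form a linear cycle, whose length is at most $2+ (\text{distance in }T\text{ from }b(e_1)\text{ to }b(e_2))\le 2+2h\le 2m$. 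If this length is exactly $2m$ we already have $C^r_{2m}\subseteq G$, contradiction; if it is shorter, we need to pad it out to length $2m$ using the tree (go down from the apex to some leaf and back) — this is why the height bound $h\le m-1$ and the hypothesis $|E|\ge(m2^{r+3})^h|L_h|$ (enough edges to have room to route) are present. I would formalize this as: for each vertex $x\in L_h$ the edges of $E$ based at $x$ can share outside-vertices only in a controlled way, and overall the ``conflict graph'' on $E$ has bounded degree depending on $r,m,h$ — specifically each edge conflicts with $O((rm2^{r+2})^{h})$ others.

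The main step is then a weighted greedy extraction. Partition $E$ according to base vertex: $E=\bigcup_{x\in L_h}E_x$. By averaging, $\sum_x|E_x|=|E|\ge (m2^{r+3})^h|L_h|$, so many $E_x$ are large. Within a single $E_x$ the sets $T(e)$ pairwise intersect only if forced, and a Sunflower/linearity argument lets us pick a large sub-matching of $\{T(e):e\in E_x\}$ — losing only a factor $r-1$ or so — because any vertex outside $H$ is in at most one edge through $x$ (linearity), so in fact $\{T(e):e\in E_x\}$ is \emph{already} a matching! The only conflicts are between different $E_x,E_{x'}$. So I would build $E^*$ by processing the vertices of $L_h$ in some order, and for each $x$ adding as many edges of $E_x$ as possible whose $T$-sets avoid all previously chosen $T$-sets; each previously chosen edge can block at most $r-1$ vertices, each such vertex lies in at most (number of tree-paths)$\cdot$(stuff) edges of $E_x$, and the no-$C^r_{2m}$ condition caps how much blocking can accumulate along any tree-path. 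Tallying the losses — a factor $\le r$ for the intra-$E_x$ bookkeeping, a factor absorbing $2^r$ from edge sizes, and a factor $\le m$ from the at most $m-1$ levels of $H$ that a conflict-cycle can route through — gives exactly the per-level factor $rm2^{r+2}$, hence $|E^*|\ge c_h|E|$.

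I expect the main obstacle to be the cycle-routing/padding argument: turning ``two new edges sharing an outside vertex, plus a short tree path'' into a genuine copy of $C^r_{2m}$ of the \emph{exact} length $2m$, rather than merely some short linear cycle. One must use the tree $H$ (and the abundance of edges in $E$, via the hypothesis $|E|\ge (m2^{r+3})^h|L_h|$, to guarantee enough disjoint ``slack'' edges) to stretch a length-$(2+2j)$ cycle with $j<m$ up to length $2m$ while keeping it linear and vertex-disjoint off the shared structure. I would isolate this as a sub-claim: \emph{if $H$ is a leveled linear $r$-tree of height $h\le m-1$ with $|L_h|$ leaves, and $E$ is a set of at least $(m2^{r+3})^h|L_h|$ pendant edges at $L_h$ with $>c_h|E|$ of them mutually conflicting in the above sense, then $C^r_{2m}\subseteq H\cup E$} — and prove it by explicitly choosing the two conflicting edges, the tree-path between their bases, an apex vertex, and a descending chain of $L_h$-to-outside edges from $E$ to absorb the remaining $2m-2-2j$ edges, all chosen greedily to stay disjoint, the greedy choices being feasible precisely because of the counting hypothesis on $|E|$ relative to $|L_h|$. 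Everything else is bookkeeping with the stated constants.
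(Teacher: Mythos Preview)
Your framework is right---induct on $h$, and use the fact that two edges of $E$ sharing an outside vertex close up a cycle through the tree---but the execution has a real gap. The assertion that the conflict graph on $E$ has degree $O((rm2^{r+2})^h)$ is false in general: a single outside vertex $v$ can lie in up to $|L_h|$ edges of $E$ (one per base vertex, by linearity), so conflict degrees are of order $(r-1)|L_h|$, which is unbounded in the relevant parameters. Your greedy extraction therefore only yields a matching of size roughly $|E|/((r-1)|L_h|)$, not $c_h|E|$. The no-$C^r_{2m}$ hypothesis has to be used more structurally than ``caps how much blocking can accumulate along a tree-path,'' and your padding sketch---two conflicting edges, a tree path of length $2j$, then absorb $2m-2-2j$ more edges---never says where the extra edges come from or why they stay linear and disjoint.

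The paper proceeds differently. Set $F=\{e\setminus L_h:e\in E\}$ and let $Q$ be a minimum vertex cover of $F$; by the matching--cover inequality it suffices to show $|Q|\ge (r-1)c_h|E|$. Passing to a cross-cut $Q'\subseteq Q$ of a large subfamily, one gets a bipartite $2$-graph $B$ on $L_h\cup Q'$ whose edges are in bijection with a positive fraction of $E$. The children of $w$ partition $L_h$ into blocks $A_1,\ldots,A_p$, and $Q'$ is split into $Q^+$ (vertices with $B$-neighbours in at least $2rm$ blocks) and $Q^-$. If the $Q^+$ side carries half of $B$ but $|Q^+|$ is too small, then $B^+$ has average degree $\ge 4rm$ and hence a subgraph of minimum degree $\ge 2rm$; under the default $(r-2)$-set edge-colouring (which is strongly proper) one finds a \emph{strongly rainbow} path of length $2m-2h-2$ in $B^+$, whose two endpoints lie in $Q^+$ and can therefore be routed via single $E$-edges into two \emph{distinct} blocks $A_i,A_j$, and then up the tree by two disjoint length-$h$ paths to $w$. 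That is the correct ``padding'': the bulk of the length comes from a path in $B$, not from stretching one short cycle. If instead the $Q^-$ side dominates, each vertex of $Q^-$ concentrates its $B$-edges in one block $A_{i(x)}$, so after a pigeonhole cleanup $B^-$ decomposes across the subtrees $H_{x_i}$ of height $h-1$, and the induction hypothesis applied to each heavy block yields the lower bound on $|Q'|$. Your proposal contains neither the bipartite rainbow-path mechanism that manufactures a cycle of the \emph{exact} length $2m$, nor any account of how the induction on $h$ is actually invoked; both are essential.
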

\begin{proof}
We use induction on $h$. For the basis step let $h=0$ and $H$ consists of a single vertex $w$. By our assumption, $E$ is a set of edges containing $w$.
Since $G$ is linear, every two of these edges intersect only at $w$. 
Let $E^*=E$. It is easy to see that the claim holds.

For induction step, let $h\geq 1$. Suppose $T$ is a $2$-uniform
tree of height $h$ rooted at $w$ with levels $L_0,L_1,\ldots, L_h$ and $H=T^{(r)}\subseteq G$.
By our assumption, each edge in $E$ contains one vertex in $L_h$ and $r-1$ vertices outside $H$.  Let $F=\{e\setminus L_h: e\in E\}$. Then $F$ is an $(r-1)$-graph.
Since $G$ is linear and $r\geq 3$, 
the mapping $\sigma: E\to F$ that maps $e$ to $e\setminus L_1$ is a bijection.
So $|F|=|E|$. Let $Q$ be a minimum vertex cover of $F$.
By Lemma \ref{cross-cut}, there exist $F'\subseteq F$ and $Q'\subseteq Q$ such
that $|F'|\geq \frac{r-1}{2^{r-1}}|F|=\frac{r-1}{2^{r-1}}|E|$ and that $Q'$ is a cross-cut of $F'$.
Let $E'$ be the set of edges of $E$ corresponding to edges of $F'$ (via $\sigma^{-1}$).
Then $|E'|=|F'|$ and each edge of $E'$ contains exactly one vertex of $L_h$,
one vertex of $Q'$, and $r-2$ vertices outside $V(H)\cup Q'$. 
Let $B=\{e\cap (L_h\cup Q'): e\in E'\}$. 
By definition, $B$ is a bipartite $2$-graph with a bipartition $(X,Q')$ where $X=V(B)\cap L_h$.
The mapping $f: e\to e\cap(L_h\cup Q')$ is a bijection from $E'$ to $B\subseteq \partial_2(G)$.
So
$$|B|=|E'|=|F'|\geq \frac{r-1}{2^{r-1}}|E|.$$
Clearly, no edge of $G$ contains more than one edge of $B$ and
in the default edge-coloring $\phi$ of $B$ the colors are disjoint from $V(B)\cup V(H)$.

Let $x_1,\ldots, x_p$ denote the children of $w$ in $T$. For each $i\in [p]$, let 
$A_i=V(T_{x_i})\cap L_h$. So $A_i$ consists of vertices in $L_h$ that are descendants
of $x_i$ (in $T$). Note that $A_1,\ldots, A_p$ are pairwise disjoint.
Let
\begin{align*}
Q^+&=\{x\in Q': N_B(x) \cap A_i \neq \emptyset \mbox{ for at least
$2rm$ different $A_i$'s}\}\\
Q^-&=\{x\in Q': N_B(x) \cap A_i \neq \emptyset \mbox{ for fewer than
$2rm$ different $A_i$'s}\}\\
\end{align*}
Then $Q^+$ and $Q^-$ partition $Q'$.  Let $B^+$ denote the subgraph of $B$ induced by $X\cup Q^+$ and $B^-$ the subgraph of $B$ induced by $X\cup Q^-$. Then $B=B^+\cup B^-$.

\medskip

{\bf Claim 1.} $|Q'|\geq \frac{c_{h-1}|B|}{8rm}$.

\medskip

{\it Proof of Claim 1.} We consider two cases.

\medskip

{\bf Case  1.} $|B^+|\geq \frac{1}{2}|B|$.

\medskip

By our earlier discussion, $|B|\geq \frac{r-1}{2^{r-1}}|E|$. So
$|B^+|\geq \frac{|B|}{2}>\frac{(r-1)|E|}{2^r}$. We claim that $|Q^+|\geq \frac{|B^+|}{4rm}$.
Suppose for contradiction that $|Q^+|< \frac{|B^+|}{4rm}$. Then $|B^+|\geq 4rm |Q^+|$.
By our assumption $|E|\geq (m2^{r+3})^h|L_h|\geq (m2^{r+3})^h |X|$. 
Hence $|B^+|\geq \frac{(r-1)(m2^{r+3})^h}{2^r}|X|\geq 4rm|X|$.
So $|B^+|\geq 2rm (|X|+|Q'|)=2rm |V(B^+)|$.
Thus, $B^+$ has average degree at  least
$4rm$. By a well-known fact, $B^+$ contains a subgraph $B^*$
with minimum degree at least
$2rm$. Let $\phi$ be the default edge-coloring of $B^*$.
By Lemma \ref{default-coloring},  $\phi$ is strongly proper. Let $x$ be
any vertex in $V(B^*)\cap Q^+$. By Lemma \ref{rainbow-path}, $B^*$ contains a path
$P$ of length $2m-2h-2$ starting at $x$ that is strongly rainbow under $\phi$.
Since $B^*$ is bipartite  and $2m-2h-2$ is even, the other endpoint $y$ of $P$ lies in $Q^+$.
Now the $r$-graph $P^+$ with edge set $\{e\cup \phi(e): e\in P\}$ is a linear path of length
$2m-2h-2$ with endpoints $x$ and $y$ using edges of $E'\subseteq E$. 
By the definition of $E$, $V(P^+)\cap V(H)\subseteq L_h$.

Now, since $x\in Q^+$, by definition,  $N_B(x)\cap A_i\neq \emptyset$ for at least
$2rm$ different $i$'s. Without loss of generality suppose $N_B(x)\cap A_i\neq \emptyset$
for $i=1,\ldots, 2rm$. For each $i\in [2rm]$, let $u_i\in N_B(x)\cap A_i$ 
and let $e_i$ be the unique
edge of $E$ containing $xu_i$.  Since $G$ is linear,
$e_1\setminus\{x\},\ldots, e_{2rm}\setminus \{x\}$ are pairwise disjoint. Since there are clearly
fewer than $2rm$ vertices contained in $P^+$, for some $i\in [2rm]$, $e_i\setminus \{x\}$
is vertex disjoint from $P^+$. Without loss of generality, suppose $e_1\setminus\{x\}$ is
vertex disjoint from $P^+$.
 Likewise, since $y\in Q^+$, we can find an edge $f_1\in E'$ containing $y$ intersecting some $A_j$ such that $j\neq 1$ and that $f_1\setminus \{y\}$ is disjoint from  $V(P^+)\cup e_1$. 
Without loss of generality, suppose $j=2$.
Let $\{v_1\}=f_1\cap A_2$. Let $P_1$ be the unique $u_1,w$-path
and $P_2$ the unique $v_1,w$-path in $H$, respectively. Since $x_1$ and $x_2$
are different children of $w$ in $T$, $P_1,P_2$ are two internally disjoint paths of length $h$,
sharing only $w$. Now  $P^+\cup\{e_1,f_1\}\cup P_1\cup P_2$ is a
linear cycle of length $2m-2h-2+2+2h=2m$ in $G$, contradicting our assumption about $G$.
Hence $|Q^+|\geq \frac{|B^+|}{4rm}\geq \frac{|B|}{8rm}$ and thus $|Q'|\geq \frac{|B|}{8rm}\geq 
\frac{c_{h-1}|B|}{8rm}$.

\medskip

{\bf Case 2.} $|B^-|\geq \frac{1}{2}|B|$.

\medskip

We have $|B^-|\geq \frac{|B|}{2} \geq \frac{(r-1)|E|}{2^r}$. 
For each vertex $x\in B^-$, by our assumption, $N_B(x)\cap A_i\neq \emptyset$
for fewer than $2rm$ different $i$'s. Among the $A_i$'s that receive edges of $B^-$ from $x$,
let $A_{i(x)}$ be one that receives the most edges of $B$ from $x$. 
We now form a subgraph $B^-_1$ of $B^-$ by including for each $x\in Q^-$
the edges from $x$ to $A_{i(x)}$.  By our procedure, 
\begin{equation} \label{B1}
|B^-_1|\geq \frac{|B^-|}{2rm}\geq \frac{(r-1)|E|}{rm 2^{r+1}}\geq \frac{r-1}{rm2^{r+1}} (m2^{r+3})^h|L_h|\geq 2(m2^{r+3})^{h-1}|L_h|.
\end{equation}
Recall that $A_1,\ldots, A_p$ are disjoint subsets of $L_h$. In $B^-_1$, each vertex in $Q^-$ sends edges
to at most one $A_i$. For each $A_i$, call $A_i$ {\it light} if the number of edges of $B^-_1$
incident to $A_i$ is less than $(m2^{r+3})^{h-1}|A_i|$; otherwise call $A_i$ {\it heavy}. 
Clearly the total number of edges of $B^-$ that are incident to light $A_i$'s is at most
$(m2^{r+3})^{h-1} |L_h|$, which is at most $\frac{1}{2}|B^{-}_1|$ by \eqref{B1}.
So the number of edges of $B^-_1$ that are incident to heavy
$A_i$'s is at least $\frac{1}{2}|B^-_1|$. 

Without loss of generality, suppose that $A_1,\ldots, A_t$ are the heavy $A_i$'s.
For each $i\in [t]$, let $Q_i$ be the set of vertices in $Q^-$ that 
are joined by edges of $B^-_1$ to $A_i$. 
By our definition of $B^-_1$, $Q_1,\ldots, Q_t$ are pairwise disjoint.
Also, for each $i\in [t]$, let $E_i$ be the set of edges of $E'$ corresponding to the
set of edges of $B^-_1$ that are  incident to $A_i$.  By our assumption $|E_i|\geq  (m2^{r+3})^{h-1}|A_i|$. Recall that $x_1,\ldots, x_p$ denote the children of $w$ in $T$.
For each $i\in [t]$, $H_{x_i}$ is
a linear tree of height $h-1$ rooted at $x_i$ whose $(h-1)$-th level is $A_i$.
Each edge of $E_i$ contains one vertex of $A_i$ and $r-1$ vertices outside $H_{x_i}$
and $|E_i|\geq (m2^{r+3})^{h-1}|A_i|$. By induction hypothesis, there exists $E'_i\subseteq 
E_i$ such that $|E'_i|\geq c_{h-1}|E_i|$ and $E'_i\setminus A_i$ is a matching. In particular,
this yields $|Q_i|\geq c_{h-1}|E_i|$. Hence $|Q'|\geq |Q^-|\geq \sum_{i=1}^t |Q_i|
\geq c_{h-1}\sum_{i=1}^t |E_i|\geq c_{h-1}\frac{|B^-_1|}{2}\geq \frac{c_{h-1}|B^-|}{4rm}
\geq \frac{c_{h-1}|B|}{8rm}$ by \eqref{B1}.
This proves Claim 1. \qed

\medskip

By Claim 1, we have $|Q|\geq |Q'|\geq \frac{c_{h-1}|B|}{8rm}
\geq \frac{(r-1)c_{h-1}|E|}{2^{r-1}8rm} =\frac{(r-1)c_{h-1}|E|}{rm2^{r+2}}$.
By Lemma \ref{matching-cover}, $F$ contains a matching $F^*$  of size at least
$\frac{c_{h-1}|E|}{rm2^{r+2}}=c_h |E|$. 
Let $E^*$ be the set of edges of
$E$ corresponding to $F^*$. Then $|E^*|=|F^*|$ and 
$H\cup E^*$ is a leveled linear tree of height $h+1$ rooted at $w$ with $L_{h+1}$ consisting
of one vertex of each edges in $F^*$.
\end{proof}

\begin{theorem} \label{even-cycles}
Let $m,r$ be positive integers where $m\geq 2$ and $r\geq 3$.
There exist a positive real $c_{m,r}$ and a positive integer $n_1$
such that for all $n\geq n_1$ we have
$$ex_L(n, C^{r}_{2m})\leq c_{m,r} n^{1+\frac{1}{m}}.$$
\end{theorem}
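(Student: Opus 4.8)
The plan is to argue by contradiction: assuming $ex_L(n,C^r_{2m})>c_{m,r}n^{1+1/m}$ for a constant $c_{m,r}$ to be fixed at the end, I would build, by iterating Lemma~\ref{even-expansion}, a leveled linear $r$-tree of height $m$ rooted at one vertex whose levels grow so fast that the top level cannot fit inside $G$. First I pass to a subgraph of large minimum degree: if $G$ is a linear $r$-graph on $n$ vertices with $|G|>c_{m,r}n^{1+1/m}$ and $C^r_{2m}\not\subseteq G$, then $G$ has average degree $>rc_{m,r}n^{1/m}$, so by Lemma~\ref{min-degree} it has a subgraph $G'$ with $\delta(G')\ge c_{m,r}n^{1/m}$; replacing $G$ by $G'$ I may assume $\delta(G)\ge cn^{1/m}$ with $c=c_{m,r}$ and still $C^r_{2m}\not\subseteq G$.

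Next I grow a tree. Fix any vertex $w$, set $H_0=L_0=\{w\}$, and grow $H_0\subseteq H_1\subseteq\cdots$ level by level. Given $H_h$ of height $h\le m-1$ with top level $L_h$, let $E$ be the set of all edges of $G$ meeting $L_h$ in exactly one vertex and otherwise lying outside $V(H_h)$. Since $G$ is linear, two such edges through a fixed $v\in L_h$ meet only in $v$, so $v$ lies in at least $\delta(G)-|V(H_h)|$ of them and $|E|\ge|L_h|\bigl(cn^{1/m}-|V(H_h)|\bigr)$. As long as $|V(H_h)|\le\tfrac12 cn^{1/m}$ this forces $|E|\ge\tfrac12 cn^{1/m}|L_h|\ge(m2^{r+3})^h|L_h|$ for $n$ large (the last inequality because $(m2^{r+3})^h$ is a constant), so Lemma~\ref{even-expansion} supplies $E^*\subseteq E$ with $|E^*|\ge c_h|E|$ and $H_{h+1}:=H_h\cup E^*$ a leveled linear $r$-tree of height $h+1$ with $|L_{h+1}|\ge c_h|L_h|\bigl(cn^{1/m}-|V(H_h)|\bigr)$. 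Whenever $|L_{h+1}|$ overshoots a preassigned target I trim $H_{h+1}$ by deleting surplus edges of $E^*$ and their new vertices, which is harmless since the new level has no descendants yet.

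For $m=2$ this already finishes: take $|L_1|$ of order $\tfrac{c}{2(r-1)}n^{1/2}$ (so $|V(H_1)|\le\tfrac12 cn^{1/2}$), and at the last step use the full outgoing edge set without trimming, getting $|L_2|\ge c_1|L_1|\bigl(cn^{1/2}-|V(H_1)|\bigr)$ of order $c^2 n$, which exceeds $n$ once $c_{2,r}$ is large --- impossible, as $L_2\subseteq V(G)$. (In particular $ex_L(n,C^r_4)=O(n^{3/2})$.) The same scheme is the template for general $m$: take targets $t_1=\cdots=t_{m-1}$ of order $\tfrac{c}{2(r-1)(m-1)}n^{1/m}$, keeping $|V(H_h)|\le\tfrac12 cn^{1/m}$ for all $h\le m-1$ and the trimming feasible (one needs $t_{h+1}\le c_h t_h\cdot\tfrac12 cn^{1/m}$, which holds), and omit the trim at the last step to obtain $|L_m|$ of order $c^2 n^{2/m}$ up to a factor depending on $m,r$.

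The main obstacle is that for $m\ge 3$ these two demands are in genuine tension: keeping $|V(H)|=O(n^{1/m})$ so the extension can continue caps every level at $O(n^{1/m})$ and hence $|L_m|$ at $O(n^{2/m})=o(n)$, so the crude overflow count no longer produces a contradiction. To get around this I would let the levels grow with no trimming for as long as $|V(H_h)|\le\tfrac12 cn^{1/m}$ holds, and examine the first level $h^*\le m$ where it fails (setting $h^*=m$ if it never does). If $h^*=m$ the growth was geometric, $|L_m|\ge\bigl(\prod_{i<m}c_i\bigr)(c/2)^m n>n$ for $c_{m,r}$ large, a contradiction; otherwise $h^*\le m-1$, some level $L_{h^*}$ is already of width $\Omega(n^{1/m})$, and --- after trimming that level back to $\tfrac{c}{4(r-1)}n^{1/m}$ so its vertices again have $\Omega(n^{1/m})$ outgoing edges --- one extracts from it, using Lemma~\ref{rainbow-path} on the outgoing structure together with a pigeonhole argument, two vertices lying in different subtrees of $w$ joined by a path of length $2m-2h^*$ internally disjoint from $H$; combined with the two tree-paths back to $w$ this is a $C^r_{2m}$, contradiction. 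Closing this last step cleanly --- matching parities and ensuring the connecting path avoids $V(H)$ --- is the delicate point, and amounts to a linear-hypergraph rendition of the Bondy--Simonovits BFS-layer argument; once it is in place, $c_{m,r}$ and $n_1$ are chosen to validate all the ``for $n$ large'' inequalities.
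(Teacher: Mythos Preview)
Your proposal correctly identifies the central engine (Lemma~\ref{even-expansion}) and correctly diagnoses the real obstacle for $m\ge 3$: one cannot simultaneously keep $|V(H_h)|\le\tfrac12 cn^{1/m}$ and let the levels grow geometrically. But your proposed resolution --- the $h^*$ dichotomy --- has a genuine gap.

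First observe that, with no trimming, one always has $h^*=1$: already $|L_1|=|E_0^*|\ge c_0|E_0|=d(w)\ge cn^{1/m}$ (since $c_0=1$), so $|V(H_1)|\ge(r-1)cn^{1/m}>\tfrac12 cn^{1/m}$. Your ``$h^*=m$'' case is therefore vacuous, and the whole argument rests on closing a linear path of length $2m-2$ between two children of $w$ through the outgoing edges. You sketch this as ``Lemma~\ref{rainbow-path} on the outgoing structure together with a pigeonhole argument,'' but that step can simply fail. Concretely: form the bipartite $2$-graph $B$ between (trimmed) $L_{h^*}$ and a cross-cut $Q'$ of $E\setminus L_{h^*}$. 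Lemma~\ref{rainbow-path} needs a subgraph of $B$ with minimum degree $\Omega(m)$ on \emph{both} sides, but nothing forces the $Q'$-side degrees to exceed~$1$; if every vertex of $Q'$ has degree~$1$ in $B$ there is no path of length $\ge 2$ between vertices of $L_{h^*}$ at all. In that situation $E\setminus L_{h^*}$ is itself a matching, so Lemma~\ref{even-expansion} hands you back a matching extension rather than a cycle --- and then $|V(H_{h^*+1})|$ jumps to order $n^{2/m}$, killing any further growth. This is not a parity or avoidance technicality; it is the same tension you already identified, reappearing one level later. The ``Bondy--Simonovits BFS-layer'' analogy does not rescue you, because in that argument the layers are forced to be large (of order $n$, not $n^{1/m}$), which is exactly what you cannot guarantee here.

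The paper sidesteps this completely with one extra preprocessing step that you did not use: Proposition~\ref{split}. Before growing anything, randomly partition $V(G')$ into $m$ parts $S_1,\dots,S_m$ so that every vertex has $\Omega(n^{1/m})$ link-edges lying entirely inside each $S_i$. Then grow the tree with $L_i\subseteq S_i$, taking $E_i$ to consist of edges with one vertex in $L_i$ and the other $r-1$ vertices in $S_{i+1}$. Since $V(H_i)\subseteq S_1\cup\cdots\cup S_i$ and $S_{i+1}$ is disjoint from that union, the hypothesis ``$r-1$ vertices outside $H$'' in Lemma~\ref{even-expansion} is satisfied \emph{automatically}, with no constraint on $|V(H_i)|$ whatsoever. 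The levels then grow by a factor $\ge N^{1/m}$ at every step, giving $|L_m|>N$ directly. The partition is the missing idea: it decouples the size of the tree from the supply of outgoing edges.
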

\begin{proof}
Let $\beta=(rm2^{r+2})^m$ and 
$c_{m,r}=2m^{r-1}\beta$. Choose $n_1$ such that $c_{m,r}n_1^\frac{1}{m}\geq n_0$, where $n_0$ is given in Lemma \ref{split}. Let $G$ be an $n$-vertex linear $r$-graph with
at least $c_{m,r}n^{1+\frac{1}{m}}$ edges, where $n\geq n_1$. We prove that $G$ contains a copy of $C^r_{2m}$.
By our assumption, $G$ has average degree at least $rc_{m,r} n^{\frac{1}{m}}$.
By Lemma \ref{min-degree}, there exists a subgraph $G'$ of $G$ with $\delta(G')\geq 
c_{m,r}n^{\frac{1}{m}}$. Let $N=n(G')$. Then $N\geq c_{m,r} n^\frac{1}{m}\geq n_0$ and $\delta(G')\geq c_{m,r} N^{\frac{1}{m}}$. By Lemma \ref{split} (with $t=m$),
there exists a partition of $V(G')$ into $S_1,\ldots, S_m$
such that for each $u\in V(G')$ and $i\in [m]$, we have $|\cL_{G'}(u)\cap G'[S_i]|\geq
\frac{c_{m,r}}{2m^{r-1}} N^{\frac{1}{m}}=\beta N^{\frac{1}{m}}$.

Let $w$ be any vertex in $S_1$. Let $L_0=\{w\}$. Inside $G'$,
we will construct a leveled linear tree $H$ of height $m$ 
rooted at $w$ with levels $L_1,\ldots, L_m$ such that for each $i\in [m]$,
$L_i\subseteq S_i$ and $|L_i|\geq N^{\frac{1}{m}}|L_{i-1}|$. This will imply that
$|L_m|\geq N$, which is a contradiction, which will then complete our proof.

We construct $H$ as follows.
Let $E_1$ be the set of edges of $G'$ containing $w$ that correspond to $\cL_{G'}(w)\cap G'[S_1]$.
By our assumption, $|E_1|\geq \beta N^{\frac{1}{m}}\geq N^{\frac{1}{m}}$, by our definition of
$\beta$. Also, each edge of $E_1$
consists of $w$ and $r-1$ vertices in $S_1$. Let $L_1$ consists of a vertex from
$e\setminus \{w\}$ for each $e\in E_1$. 
In general, suppose we have grown $i$ levels $L_1,\ldots, L_i$, where $i\leq m-1$, 
such that for each $j\in [i]$, $L_j\subseteq S_j$ and $|L_j|/|L_{j-1}|\geq N^\frac{1}{m}$.
Let $E_i$ denote the set of edges in $G'$ that contain one vertex in $L_i$
and $r-1$ vertices in $S_{i+1}$. By our assumption about the partition $(S_1,\ldots, S_m)$, 
$|E_i|\geq \beta N^{\frac{1}{m}} |L_i|\geq (m2^{r+3})^m|L_i|$, noting that $\beta\geq
(m2^{r+3})^m$.
Since $C^r_{2m}\not\subseteq G'$, by Lemma \ref{even-expansion},  there exists
a subset $E^*_i\subseteq E_i$ such that $|E^*_i|\geq \frac{1}{(rm2^{r+2})^i} |E_i|$ such
$E^*_i\setminus L_i$ is a matching.  Let $H_{i+1}=H_i\cup E^*_i$
and let $L_{i+1}$ consists of one vertex from $e\setminus L_i$ for each $e\in E^*_i$.
Then $H_{i+1}$ is a leveled linear tree rooted at $w$ of height $i+1$ whose $i+1$-th level
$L_{i+1}$ is contained in $S_{i+1}$. Furthermore, $|L_{i+1}|=|E^*_i|\geq 
\frac{1}{(rm2^{r+2})^i} |E_i|\geq \frac{\beta}{(rm2^{r+2})^i}N^{\frac{1}{m}}|L_i|\geq N^{\frac{1}{m}}|L_i|$. We can continue like this to construct $H$ and derive the desired contradiction.
\end{proof}

%%%%%%%%%%%%%%%%%%%%%%%%%%%%%%%%%%%%%%%%%%%%%%%%%%%%%%%%%%%%%%%
\section{Leveled linear quasi-trees}

\subsection{Leveled linear quasi-trees}

To study the odd cycle case, we generalize the notion of leveled linear trees as follows. Let $r\geq 3$. A linear $r$-graph $H$ is called a {\it leveled linear quasi-tree} of height $h$ {\it rooted at} $w$ if it is the union of a sequence of $r$-graphs $H_0, H_1,\ldots, H_{h-1}$ satisfying the following: (1) Each $H_i$ is an $r$-partite $r$-graph with no isolated vertex and has parts $L_i, L'_i, J^{(1)}_i,\ldots, J^{(r-2)}_i$ such that with $B_i=\{e\cap (L_i\cup L'_i): e\in H_i\}$, $H_i$ is the $r$-expansion of $B_i$.
% with $J^{(1)}_i\cup \ldots \cup J^{(r-2)}_i$ forming the set of expansion vertices. 
(2) For each $i=0,1,\ldots, h-1$, $J^{(r-2)}_i=L_{i+1}$. (3) For each $i=0,1,\ldots, h-2$, $V(H_i)\cap V(H_{i+1})=L_{i+1}$ and $V(H_i)\cap V(H_j)=\emptyset$ whenever $|i-j|>1$. 
(4) $L_0=\{w\}$. For each $i=0,\ldots, h$, we call $L_i$ the $i$th {\it main level} of $H$. 
For each $i=0,\ldots, h-1$, we call $L'_i$ the $i$th {\it companion level} of $H$.

For each $i\in \{0,1,\ldots, h-1\}$, we call $H_i$ the $i$-th {\it segment} of $H$ and
$B_i$ {\it the defining bipartite graph} of $H_i$.  For each edge $f$ of $B_i$
the unique vertex in $L_{i+1}$ that corresponds to $f$ is said to be a {\it presentative}
of $e$. Given $x\in V(B_i)$ and $y\in L_{i+1}$,
we say that $y$ is a {\it child} of $x$ and that $x$ is a {\it parent} 
of $y$ if $y$ is a representative of an edge of $B_i$ incident to $x$. Observe
that every two different vertices $u,v$ in the same main level $L_i$ or in
the same companion level $L'_i$, where $i\leq h-1$, 
must have disjoint sets of children in $L_{i+1}$ since the sets of edges of $B_i$ incident
to $u$ and $v$, respectively, are disjoint. 

Given a vertex $x\in L_i\cup L'_i$, where $i\leq h-1$, define  the {\it down tree } $T_x$, rooted at $x$, to be the $2$-graph obtained by
including all the edges between $A_0=\{x\}$ and its set $A_1$ of children 
in $L_{i+1}$, and then including all the edges joining vertices in $A_1$ and 
the set $A_2$ of their children in $L_{i+2}$ and etc, until we run out of levels. It is easy to see that $T_x$ is a tree rooted at $x$ of height at most $h-i$. 
Also, if $x,y\in L_i$ or $x,y\in L'_i$, $x\neq y$, then the earlier observation about
disjoint sets of children implies that $V(T_x)\cap V(T_y)=\emptyset$. Furthermore, in $T_w$,
where $w$ is the root of $H$, for each $i=0,\ldots, h$, 
the $i$-th distance class from $w$ is precisely all of $L_i$.

Given a vertex $x\in L_i\cup L'_i$, where $i\leq h-1$, define the {\it down graph} $H_x$, rooted at $x$, to be the subgraph of $H$ obtained by replacing each edge $f$ of $T_x$ with the corresponding edge $e$ of $H$ that contains $f$. The following lemma follows immediately from the definitions and our discussions above.

\begin{lemma} \label{down-graph}
Let $H$ be an $r$-uniform leveled linear quasi-tree of height $h$ rooted at $w$ with 
segments $H_1,\ldots, H_{h-1}$. Let $x\in L_i\cup L'_i$, where $0\leq i\leq h-1$. 
Then $H_x$ is a leveled quasi-tree of height at most $h-i$ rooted at $x$. Also, 
$\forall a,b\in L_i\cup L'_i$, $a\neq b$, if either 
$a,b\in L_i$ or $a,b\in L'_i$,  then $(V(H_a)\cap V(H_b))\cap L_j=\emptyset$ for all $j\geq i+1$.
\end{lemma}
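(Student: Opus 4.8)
The plan is to unwind the three assertions directly from the definitions of leveled linear quasi-tree, down tree, and down graph, since the lemma is explicitly stated to follow ``immediately from the definitions and our discussions above.'' First I would address the claim that $H_x$ is a leveled linear quasi-tree of height at most $h-i$ rooted at $x$. The natural approach is to exhibit its defining sequence of segments: for $x\in L_i\cup L'_i$, set $\widehat H_j = H_{i+j}$ restricted to those edges whose trace in the down tree $T_x$ lies between distance classes $j$ and $j+1$ from $x$, for $j=0,1,\dots,h-i-1$. One then checks that each $\widehat H_j$ inherits the $r$-partite structure (its parts being the portions of $L_{i+j}, L'_{i+j}, J^{(1)}_{i+j},\dots, J^{(r-2)}_{i+j}$ that actually meet $H_x$), that it is the $r$-expansion of the corresponding trace bipartite graph, that consecutive segments meet exactly in the appropriate main level, and that non-consecutive ones are disjoint — all of which are immediate restrictions of the analogous properties of $H$. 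The root level is $\{x\}$ by construction of $T_x$, and the height bound $h-i$ is clear because $T_x$ bottoms out once the levels of $H$ run out, i.e.\ after at most $h-i$ steps. The only mildly delicate point is that one must verify $H_x$ has no isolated vertices and that its segments have no isolated vertices; this follows because every vertex placed into $H_x$ was placed there precisely as an endpoint of an included edge of $T_x$ (hence lies in an edge of $H_x$), and because the expansion vertices $J^{(s)}_{i+j}$ of a retained edge are automatically retained with it.

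For the second assertion, suppose $a,b\in L_i\cup L'_i$ with $a\neq b$ and either $a,b\in L_i$ or $a,b\in L'_i$. The key observation, already recorded in the paragraph preceding the lemma, is that two distinct vertices in the same main level (or the same companion level) with index $\le h-1$ have disjoint sets of children in $L_{i+1}$, because the edge-sets of $B_i$ incident to $a$ and to $b$ are disjoint (they are the fibers of a proper edge-partition coming from an $r$-partite structure). I would then argue by induction on $j\ge i+1$: for $j=i+1$, disjointness of the child sets gives $A_1(a)\cap A_1(b)=\emptyset$, i.e.\ $(V(T_a)\cap V(T_b))\cap L_{i+1}=\emptyset$; and if the distance-$(j-i)$ classes of $T_a$ and $T_b$ are already disjoint subsets of $L_j$, then their children in $L_{j+1}$ are disjoint as well, again because distinct vertices of the same level $L_j$ ($j\le h-1$) have disjoint child sets. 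Hence $(V(T_a)\cap V(T_b))\cap L_j=\emptyset$ for all $j\ge i+1$, and since $H_a,H_b$ differ from $T_a,T_b$ only by adding expansion vertices (which live in the $J^{(s)}$-parts, not in any main level $L_j$), the same holds for $V(H_a)\cap V(H_b)$. The third, unstated-but-implicit point — that $H_a$ and $H_b$ are themselves leveled linear quasi-trees rooted at $a$ and $b$ — is just the first assertion applied to $a$ and to $b$ separately.

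I do not anticipate a genuine obstacle here; the lemma is bookkeeping. The part requiring the most care is writing the segment decomposition of $H_x$ cleanly enough to make ``$H_x$ is a leveled linear quasi-tree'' a literal verification of conditions (1)--(4) rather than a hand-wave, and in particular making sure the index shift ($H$'s level $i+j$ becomes $H_x$'s level $j$) is handled consistently and that the ``$J^{(r-2)}_{i+j}=L_{i+j+1}$'' gluing condition (2) survives restriction. If one wanted to be maximally economical, one could instead observe that $T_x$ is literally the down tree (in the earlier leveled-linear-tree sense) of $x$ inside the tree $T_w$, and that $H_x$ is the induced $r$-expansion along $T_x$, so both statements reduce to the already-established structural remarks about $T_w$ and about disjoint child sets; I would present the argument in that compressed form, spending at most a sentence or two on each of the three claims.
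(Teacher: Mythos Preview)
Your proposal is correct and matches the paper's approach: the paper gives no proof at all, simply stating that the lemma ``follows immediately from the definitions and our discussions above,'' and your plan is precisely a careful unwinding of those definitions (segment restriction, disjoint child sets, induction on the level index). Your write-up is in fact considerably more detailed than what the paper provides.
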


In a linear $r$-graph, a path $P$ is just the $r$-expansion of a $2$-uniform path.
An endpoint of $P$ is a vertex in the first or last edge that has degree $1$ in $P$.
An $x,y$-path is a path where $x$ is an endpoint in the first edge of $P$ and $y$ is
an endpoint in the last edge of $P$ (or vice versa).

\begin{lemma} \label{joining-path}
Let $H$ be an $r$-uniform leveled linear quasi-tree of height $h$ rooted at $w$ with 
segments $H_1,\ldots, H_{h-1}$, where $L_0,L_1,\ldots, L_h$ and $L'_0,\ldots, L'_{h-1}$ denote the main levels and companion levels, respectively.
Let $x,y\in L_i, x\neq y$, where $1\leq i\leq h-1$. Then there exists an $x,y$-path $P$ of an
even length at most $2i$ that is contained in $\bigcup_{j=0}^{i-1} H_j$ and intersects
$L_i$ only in $x$ and $y$.
\end{lemma}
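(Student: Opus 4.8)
*Let $H$ be an $r$-uniform leveled linear quasi-tree of height $h$ rooted at $w$, and let $x, y \in L_i$, $x \neq y$, where $1 \leq i \leq h-1$. Then there is an $x,y$-path $P$ of even length at most $2i$, contained in $\bigcup_{j=0}^{i-1} H_j$, meeting $L_i$ only in $x$ and $y$.*

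The plan is to induct on $i$, using the tree structure of the down-tree $T_w$ rooted at $w$ together with the dictionary between edges of $T_w$ (equivalently, edges of the defining bipartite graphs $B_j$) and edges of $H$. Since in $T_w$ the $i$-th distance class from $w$ is exactly $L_i$, the vertices $x$ and $y$ each have a unique parent in the previous level; call them $x', y' \in L_{i-1}$ (these come from the representative structure: $x$ is the representative of some edge of $B_{i-1}$ incident to $x'$, and similarly $y$).

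For the base case $i = 1$: then $x', y' \in L_0 = \{w\}$, so $x$ and $y$ are both children of $w$. Take $f_x, f_y$ to be the edges of $B_0$ whose representatives are $x$ and $y$ respectively; both are incident to $w$, and since $B_0$ underlies the linear $r$-graph $H_0$ and $x\neq y$, these are distinct edges of $B_0$ sharing only $w$. Their $r$-expansions are two edges of $H_0$ meeting only in $w$, and stringing them together gives an $x,y$-path of length $2$ inside $H_0 \subseteq \bigcup_{j=0}^{0} H_j$ that meets $L_1$ only in $x, y$ (the only $L_1$-vertices on these two expanded edges are their representatives $x$ and $y$, by condition (2) in the definition, $J^{(r-2)}_0 = L_1$, and linearity). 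For the inductive step $i \geq 2$: if $x' = y'$, then $x$ and $y$ are two children of the same vertex $x' \in L_{i-1}$; as in the base case, the two edges of $B_{i-1}$ with representatives $x$ and $y$ share only $x'$, and their $H_{i-1}$-expansions give an $x,y$-path of length $2$ inside $H_{i-1}$ meeting $L_i$ only in $x,y$. If $x' \neq y'$, apply the induction hypothesis to $x', y' \in L_{i-1}$: there is an $x',y'$-path $P'$ of even length at most $2(i-1)$ inside $\bigcup_{j=0}^{i-2} H_j$, meeting $L_{i-1}$ only in $x', y'$. Then prepend the expanded edge of $B_{i-1}$ from $x$ to $x'$ and append the expanded edge of $B_{i-1}$ from $y$ to $y'$ — these two edges lie in $H_{i-1}$, are distinct (different representatives), and by linearity of $H$ and the disjointness $V(H_{i-1}) \cap V(H_j) = \emptyset$ for $j \leq i-2$ guaranteed by condition (3), they are internally disjoint from $P'$ and from each other except at their shared endpoints $x'$ and $y'$ on $P'$. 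The result is an $x,y$-path of even length at most $2 + 2(i-1) = 2i$, contained in $\bigcup_{j=0}^{i-1} H_j$. It meets $L_i$ only in $x$ and $y$: interior vertices of $P'$ lie in levels $L_j$ with $j \leq i-1$ (this can be tracked inductively, since all edges used come from $B_0, \ldots, B_{i-2}$), and the two new edges contribute no $L_i$-vertices other than the representatives $x, y$, again using $J^{(r-2)}_{i-1} = L_i$ and linearity.

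The main obstacle is bookkeeping the intersection claims cleanly: one must verify that the two new $H_{i-1}$-edges do not accidentally reuse vertices of $P'$ (other than at $x', y'$) and that $P'$ together with the new edges is genuinely a linear path, i.e.\ its $2$-shadow is a $2$-uniform path. Both follow from the structural conditions — linearity of $H$ rules out two edges sharing two vertices, and condition (3) forces $V(H_{i-1})$ to meet $\bigcup_{j\leq i-2} V(H_j)$ only in $L_{i-1}$, which by the inductive hypothesis $P'$ touches only at $x'$ and $y'$ — but stating it carefully requires invoking that $x$ and $y$ have a \emph{unique} parent each, which holds precisely because $L_i$ is a single distance class in the tree $T_w$ (so no vertex of $L_i$ is the representative of two different edges across the $B_j$'s). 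I would also note at the start that the length is even by construction at every stage, so the ``even'' claim needs no separate argument.
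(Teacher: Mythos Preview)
Your proof follows essentially the same inductive approach as the paper's, but your case split in the inductive step has a small gap. You branch on whether $x' = y'$ (the $L_{i-1}$-parents agree), whereas the paper branches on whether the two edges $e, f \in H_{i-1}$ containing $x$ and $y$ intersect at all. These are not equivalent: even when $x' \neq y'$, the two edges of $B_{i-1}$ with representatives $x$ and $y$ may share a common vertex in the companion level $L'_{i-1}$ (nothing forbids a path of length $2$ in $B_{i-1}$ centered at a companion vertex). In that situation your ``prepend $e$ and append $f$ to $P'$'' construction does not produce a linear path, because the two new $H_{i-1}$-edges meet at a vertex outside $P'$. Your justification ``by linearity of $H$ \ldots\ they are internally disjoint \ldots\ from each other'' does not cover this, since linearity only gives $|e \cap f| \leq 1$, not $e \cap f = \emptyset$.

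The fix is immediate --- when $e \cap f \neq \emptyset$, the union $e \cup f$ is already an $x,y$-path of length $2$ in $H_{i-1}$ meeting $L_i$ only in $x,y$ --- and once you replace the criterion ``$x' = y'$'' by ``$e \cap f \neq \emptyset$'' your argument goes through and coincides with the paper's. Your more careful verification that $e, f$ are internally disjoint from $P'$ (via condition~(3)) is actually more explicit than the paper's, which simply asserts that $P' \cup \{e,f\}$ is a path.
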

\begin{proof}
We use induction on $i$. The claim is trivial when $i=1$. So assume $i\geq 2$.
Let $e$ be the unique edge of $H_i$ that contains $x$ and $f$ the unique edge of $H_i$
that contains $y$. If $e$ and $f$ share a vertex, then $e\cup f$ is an $x,y$-path of
length $2$.  Otherwise $e\cap f=\emptyset$. Let $\{x'\}=e\cap L_{i-1}$ and $\{y'\}=f\cap 
L_{i-1}$. By induction hypothesis, there is an $x',y'$-path $P$ of an even length at most $2(i-1)$ 
that is contained in $\bigcup_{j=0}^{i-2} H_j$ and intersects $L_{i-1}$ only in $x'$ and $y'$.
Now, $P\cup \{e,f\}$ is an $x,y$-path of an even length at most $2i$ that is contained
in $\bigcup_{j=0}^{i-1} H_j$ and intersects $L_i$ only in $x$ and $y$.
\end{proof}

Given a leveled linear quasi-tree $H$ rooted at $w$, a {\it monotone path} is a path  in $H$
that hits each main level at most once.  It is easy to see that for every vertex $x$ in the $i$-th main level, there is a unique monotone $w,x$-path, and that path has length $i$. For every vertex $y$ in the $i$th companion level, there exists at least one monotone $w,y$-path and 
such a path has length $i+1$.

An $r$-uniform {\it spider} $F$ with $t$ legs consists of $p$ $r$-uniform linear paths $P_1,\ldots, P_t$ (called the {\it  legs}) sharing one endpoint $x$ but are otherwise vertex-disjoint.

\begin{lemma} \label{spider}
Let $h,p,r$ be positive integes, where $r\geq 3$.
Let $H$ be an $r$-uniform leveled linear quasi-tree of height $h$ rooted at $w$ with 
segments $H_1,\ldots, H_{h-1}$. Let $L_0,\ldots, L_h$ and $L'_0, \ldots, L'_{h-1}$ 
be the main levels and companion levels, respectively.
Let $S\subseteq L_h$ such that $|S|\geq (hpr)^h$. Then exists a vertex $x\in V(H)$ such that
 1) $|V(H_x)\cap S|\geq \frac{1}{(hpr)^{h-1}}|S|$ and (2) $H_x$ contains a spider centered at $x$  that has $p$ legs each of which is a monotone path from $x$ to $V(H_x)\cap S$.
\end{lemma}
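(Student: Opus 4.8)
\textbf{Proof plan for Lemma~\ref{spider}.}
The plan is to find a single ancestor vertex $x$ that ``captures'' a large fraction of $S$ in its down graph $H_x$ and below which the down tree $T_x$ branches enough to produce $p$ disjoint monotone paths down to $S$. First I would set up a descent argument from the root. Starting at $w=$ the unique vertex of $L_0$, note that $S\subseteq L_h = V(H_w)\cap L_h$, so $w$ captures all of $S$. I would then repeatedly descend: given a current vertex $z\in L_i\cup L'_i$ with $|V(H_z)\cap S|$ large, look at the children of $z$ in $T_z$ (which lie in $L_{i+1}$); by Lemma~\ref{down-graph} the down graphs $H_u$ of distinct children $u$ of $z$ meet $L_j$ in disjoint sets for all $j\ge i+1$, hence in particular the sets $V(H_u)\cap S$ over children $u$ of $z$ are pairwise disjoint and their union is $V(H_z)\cap S$. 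So either $z$ already has at least $p$ children $u$ with $V(H_u)\cap S\neq\emptyset$ — in which case I stop and take $x=z$ — or fewer than $p$ children of $z$ carry any of $S$, and then by pigeonhole one child $u^*$ satisfies $|V(H_{u^*})\cap S| > \frac{1}{p}|V(H_z)\cap S|$, and I continue the descent with $z\leftarrow u^*$.

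The key observation making this terminate usefully is a bound on how far the descent can go. Each descent step moves from main level $i$ to main level $i+1$ (following a child edge in the down tree), so after at most $h$ steps we reach $L_h$; but a vertex $z\in L_h$ has no children carrying $S$ other than itself-type issues, so the descent must in fact stop earlier at a branching vertex, or else we would reach a point where $|V(H_z)\cap S|\ge 1$ cannot be subdivided $p$ ways. More carefully: since $|S|\ge (hpr)^h$ and each descent step that does \emph{not} stop divides the captured portion of $S$ by at most $p$, after $k$ non-stopping steps the captured set still has size at least $|S|/p^k$. I want to guarantee we stop (at a $p$-branching vertex) with captured set still of size at least $|S|/(hpr)^{h-1}$; since we take at most $h-1$ non-stopping steps before being forced to stop (a vertex in $L_{h-1}$ whose $S$-portion has size $\ge p$, which it does since $|S|/p^{h-1}\ge (hpr)^h/p^{h-1}\ge p$, must have $\ge p$ children in $L_h\cap S$ because each child contributes at most one vertex of $L_h$... ), the division factor is at most $p^{h-1}\le (hpr)^{h-1}$, giving (1).

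Finally, at the stopping vertex $x$ I have $\ge p$ children $u_1,\dots,u_p$ in $L_{i+1}$ (for the relevant level $i$) each with $V(H_{u_j})\cap S\neq\emptyset$; pick one vertex $s_j\in V(H_{u_j})\cap S\subseteq L_h$ for each $j$. Since $H$ is a leveled linear quasi-tree, in the down tree $T_x$ there is a unique monotone path from $x$ down to each $s_j$ (it runs through $u_j$), giving a monotone path $P_j$ in $H_x$ from $x$ to $s_j\in S$. These paths pass through distinct children $u_j$ of $x$, and by Lemma~\ref{down-graph} the down graphs $H_{u_j}$ are disjoint on all main levels $\ge i+1$ and on companion levels as well (the companion levels $L'_j$ for $j\ge i+1$ are also split by children); the only shared vertex is $x$ itself. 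Hence $P_1\cup\dots\cup P_p$ is a spider centered at $x$ with $p$ monotone-path legs from $x$ to $V(H_x)\cap S$, establishing (2). The main obstacle I anticipate is bookkeeping the two cases precisely — branching at a companion-level vertex versus a main-level vertex, and checking that ``disjoint sets of children'' genuinely forces disjointness of the leg interiors across all levels (main and companion) and not just the main ones — but Lemma~\ref{down-graph} together with the observation recorded right before it (that distinct vertices in the same main/companion level have disjoint child sets) should handle exactly this.
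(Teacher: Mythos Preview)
Your descent argument through children cleanly establishes (1), but there is a genuine gap in step (2). You claim that the monotone paths through the $p$ different children $u_1,\ldots,u_p$ of $x$ are vertex-disjoint except at $x$, justifying this by saying that ``the down graphs $H_{u_j}$ are disjoint on all main levels $\ge i+1$ and on companion levels as well.'' But Lemma~\ref{down-graph} only asserts disjointness on \emph{main} levels $L_j$; it says nothing about companion levels, and in fact the companion levels are \emph{not} split by children. Two distinct children $u_j,u_k\in L_{i+1}$ can have down graphs meeting in a vertex $v\in L'_m$ for some $m\ge i+1$: this happens whenever $v$ is adjacent in the defining bipartite graph $B_m$ to a vertex of $V(T_{u_j})\cap L_m$ and also to a vertex of $V(T_{u_k})\cap L_m$, which nothing in the quasi-tree axioms forbids. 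Concretely, take $h=2$, $r=3$, let $w$ have children $u_1,u_2\in L_1$, and in $B_1$ let both $u_1$ and $u_2$ be adjacent to the same $v\in L'_1$, producing $H_1$-edges $\{u_1,v,s_1\}$ and $\{u_2,v,s_2\}$. Then the unique monotone $w,s_1$-path and $w,s_2$-path both pass through $v$, so they do not form a spider at $w$; the spider one needs is centered at $v$, a companion-level vertex your descent never visits.

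The paper's proof sidesteps this by not descending through children at all. It takes a \emph{maximum} $(w,S)$-spider $M$; if it has $p$ legs we are done, otherwise every monotone path $P_y$ ($y\in S$) must meet $V(M)\setminus\{w\}$ somewhere in $\bigcup_{i\ge 1}(L_i\cup L'_i)$, a set of fewer than $hpr$ vertices, and pigeonhole produces a vertex $z$ (possibly in a companion level) through which at least $|S|/(hpr)$ of the $P_y$ pass; one then recurses on $H_z$. The crucial point is that this pigeonhole can land on a companion-level vertex like $v$ in the example above, which is exactly where the spider lives. Your approach, by only ever moving to children in the next main level, cannot reach such vertices, and this is why it fails.
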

\begin{proof}
We use induction on $h$. For the basis step let $h=1$. In this case, the claim clearly holds
by choosing $x$ to be $w$ and $p$ of the edges containing $x$ to form the required spider.
For the induction step, let $h\geq 2$. Clearly there is at least one monotone path from the root
$w$ to $S$, so there exist spiders centered at $w$ with legs being monotone paths from
$w$ to $S$. Let us call these {\it $(w,S)$-spiders}.
 Among all $(w,S)$-spiders, let $M$ be one that has the maximum number of legs.
If $M$ has $p$ legs, then the claim holds with $x=w$. So assume $M$ has fewer than $p$ legs.
For each $y\in S$, let $P_y$ be the unique monotone path in $H$ from $w$ to $y$. 
The maximality of $M$ implies that each $y\in S$,  $P_y$ intersects $M$ somewhere
besides at $w$. Let $y\in S$. If $P_y$ intersects $M$ at a vertex $u$ in 
$V(H_i)\setminus\{L_i, L'_i\}$ for some $i\leq h-1$ then
such a vertex is an expansion vertex  in $H_i$  and  both $P_y$ and $M$ must contain the corresponding  edge $e$ of $H_i$ that contains $u$ and hence both contain $e\cap L_i$
and $e\cap L_{i+1}$.
Thus, for each $y\in S$, $P_y$ contains
a vertex in  $U=(V(F)\setminus \{w\})\cap (\bigcup_{i=1}^{h-1} (L_i\cup L'_i))$.

Since $U$ has fewer than $phr$ vertices, by the pigeonhole principle,
there exists a vertex $z$ in $U$ that is contained in at least $\ce{\frac{s}{hpr}}$ different $P_y$'s.
Suppose that $z\in L_a\cup L'_a$.
Let $S'$ be the set of vertices $y$ in $S$ such that $P_y$ contains $z$. Then $|S'|\geq
\frac{|S|}{hpr}$. For each $y\in S'$, let $P'_y$ be the $z,y$-path contained in $P_y$. 
Let $H'=\bigcup_{y\in S} P'_y$. Then $H'\subseteq H_z$.
Now, $H_z$ is a leveled linear quasi-tree with height at most $h-1$ and
$S'$ is a set of vertices in its last level. 
By the induction hypothesis, there is a vertex $x$ in $H_z$ such that $|V((H_z)_x)\cap S'|\geq
\frac{|S'|}{[(h-1)pr]^{h-2}}\geq \frac{|S|}{(hpr)^{h-1}}$ and that $(H_z)_{x}$ contains a $(z, V(H_z(x))\cap S')$-spider with $p$ legs. Consider now the relationship between $(H_z)_{x}$ and
$H_x$.  Since $x$ sends multiple internally disjoint monotone paths to $S$ it is easy to see
that either $x=z$ or $x\in L_j\cup L'_j$ for some $j\geq a+1$. In either case, we have $(H_z)_{x}=H_x$.
\end{proof}

%%%%%%%%%%%%%%%%%%%%%%%%%%%%%%%%%%%%%%%%%%%%%%%%%%%%%%%%%

\section{Linear Tur\'an numbers of odd cycles}

The following lemma provides the key ingredient for our proof of Theorem \ref{main}
for odd cycles. Before presenting the technical details, let us point out what the main
technical challenge is for the odd cycle case and what the key new ideas are in overcoming
the difficulty. The general plan is similar to the even cycle case. We use a linear quasi-tree
as a framework for growing levels and argue that in the absence of $C^r_{2m+1}$ the graph
must expand quickly. The main diffculty we face is that linear quasi-trees have a interweaving structure and no longer possess a clean tree structure. Therefore, we cannot hope to link vertices cleanly back to the root. The key idea to overcome this difficulty is to apply Lemma \ref{spider} to  locate a set of vertices (called ``dominators'') at some earlier level to act as a group of roots for different vertices. This idea of untangling via a buffer can be useful elsewhere.

\begin{lemma} \label{odd-expansion}
Let $r,m, h$ be integers, where $r\geq 3$, $m\geq 2$, and $0\leq h\leq m-1$.
Let $p=2mr$ and $c=2^{r+2} (mpr)^m$.
Let $G$ be a linear $r$-graph such that $C_{2m+1}\not\subseteq G$. 
Let $H$ be an $r$-uniform leveled linear
quasi-tree of height $h$ in $G$ rooted at $w$  with
segments $H_0, H_1,\ldots, H_{h-1}$, levels $L_0,L_1,\ldots, L_h$ and companion levels $L'_1,\ldots, L'_{h-1}$. Let $S$ be a set of vertices in $G$ outside $H$ and
$E$ a set of edges in $G$ each of which contains one vertex in $L_h$ and $r-1$ vertices outside
$H$. Suppose that $|E|\geq c^h|L_h|$. Then there exists a subset $E^*$ of $E$ and a set $S$ of vertices outside $H$ such that (1) $|E^*|\geq \frac{1}{c^h}|E|$, (2) $S$ is a cross-cut of $E^*$,
(3) $E^*$ is the $r$-expansion of the $2$-graph $\Gamma=\{e\cap (L_h\cup S): e\in E^*\}$
and 
(4) either $\delta(\Gamma)\geq p$ or each vertex in $S$ has degree $1$ in $\Gamma$.
In particular, $H\cup E^*$ is a leveled linear quasi-tree of height $h+1$ rooted at $w$, where $L'_h=S$ and $L_{h+1}$ consists of one vertex 
from each member of $E^*\setminus (L_h\cup S)$.
\end{lemma}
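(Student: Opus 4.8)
Following the template of Lemma~\ref{even-expansion}, the plan is to induct on $h$, but with the clean subtree recursion replaced by the ``dominator'' device of Lemma~\ref{spider}, which is forced on us because (by Lemma~\ref{down-graph}) the down-graphs $H_x$ of vertices at a common level need not be disjoint. The base case $h=0$ is immediate: $H=\{w\}$, so linearity makes $F:=\{e\setminus L_0:e\in E\}$ a matching, and with $E^*=E$ and $S$ any cross-cut of $F$ the graph $\Gamma$ is a star centred at $w$, every vertex of $S$ has degree $1$, and $E^*$ is literally the $r$-expansion of $\Gamma$.

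For the inductive step I would first reduce to a bipartite configuration exactly as in the even case: let $F=\{e\setminus L_h:e\in E\}$, an $(r-1)$-graph with $|F|=|E|$; let $Q$ be a minimum vertex cover of $F$; apply Lemma~\ref{cross-cut} to get $F'\subseteq F$ with $|F'|\ge\frac{r-1}{2^{r-1}}|F|$ and a cross-cut $Q'\subseteq Q$ of $F'$; and form the bipartite graph $B=\{e\cap(L_h\cup Q'):e\in E'\}$ between $X=V(B)\cap L_h$ and $Q'$, where $E'\subseteq E$ is the preimage of $F'$, so $|B|=|E'|\ge\frac{r-1}{2^{r-1}}|E|$. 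By Lemma~\ref{default-coloring} the default colouring $\phi$ of $B$ is strongly proper, with $(r-2)$-set colours disjoint from $V(H)\cup Q'$. Now split on the size of $Q$. If $|Q|\ge\frac{r-1}{c^h}|E|$, then Lemma~\ref{matching-cover} gives a matching $F^*\subseteq F$ with $|F^*|\ge|Q|/(r-1)\ge\frac1{c^h}|E|$; the corresponding edges $E^*\subseteq E$ have pairwise disjoint non-$L_h$ parts, so taking $S$ to be a cross-cut of $\{e\setminus L_h:e\in E^*\}$ lands us in the second alternative — every vertex of $S$ has degree $1$ in $\Gamma$, and $E^*$ is a genuine $r$-expansion of $\Gamma$. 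If instead $|Q|<\frac{r-1}{c^h}|E|$, then since $|L_h|\le|E|/c^h$ we have $|V(B)|=|X|+|Q'|<\frac{r}{c^h}|E|\ll|B|$ (the constant $c$ is chosen precisely so that this is comfortable), so iteratively deleting vertices of degree $<p$ from $B$ destroys fewer than $p\,|V(B)|<\frac12|B|$ edges and leaves a nonempty $B^*$ with $\delta(B^*)\ge p$ and $|B^*|\ge\frac1{c^h}|E|$; then $\Gamma=B^*$, $S=Q'\cap V(B^*)$, and $E^*$ the preimage put us in the first alternative — \emph{provided} $E^*$ is a genuine $r$-expansion of $\Gamma$.

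The real content lies in justifying that last proviso, and this is exactly where $C^r_{2m+1}$-freeness and Lemma~\ref{spider} enter. The danger is that two edges of $E^*$ share an expansion vertex $v$; the edges of $E'$ through a fixed such $v$ reach pairwise distinct vertices of $L_h$, and if $v$ were shared by too many of them one would have a large ``book'' of new edges hinged at $v$ and rooted in $L_h$. Using Lemma~\ref{joining-path} to connect $L_h$-vertices by short even paths inside $\bigcup_{j<h}H_j$, using the min-degree of $B^*$ together with Lemma~\ref{rainbow-path} (and the $Q'$/colour alternation, which allows new-edge linear paths of odd length) to build a rainbow linear path of a prescribed length through the new edges, and closing it off — either through the root $w$ by two internally disjoint monotone paths of length $h$ (when the relevant $L_h$-vertices lie in different branches of $w$), or, when they are trapped in one branch and cannot be routed cleanly back to $w$, through a dominator $x$ supplied by Lemma~\ref{spider} with $p$ internally disjoint monotone legs into $L_h$ — one would obtain a copy of $C^r_{2m+1}$ of length exactly $2m+1$, a contradiction. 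Hence the offending structure is bounded, and a further application of Lemma~\ref{matching-cover} to the $(r-2)$-uniform hypergraph of expansion vertices (carried out before the peeling, so that the min-degree property can be restored afterward) lets us pass to a genuine $r$-expansion inside $E^*$ while keeping $\delta\ge p$ and $|E^*|\ge\frac1{c^h}|E|$; tracking the bounded losses from Lemmas~\ref{cross-cut} and~\ref{matching-cover} and from the peeling and pigeonhole steps across the $h$ levels is what dictates the choice $c=2^{r+2}(mpr)^m$.

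I expect the genuine obstacle to be precisely this cycle-closing step. In the even case the two ends of a rainbow path run back to the root along automatically disjoint tree branches; here the interwoven quasi-tree has no global root-to-leaf tree, so one must simultaneously (i) extract a dominator with enough spider legs from Lemma~\ref{spider}, (ii) keep the spider legs, the rainbow new-edge path, and the Lemma~\ref{joining-path} connectors pairwise internally disjoint, and (iii) make all the lengths and parities sum to exactly $2m+1$ — while only a $1/c^h$ fraction of the edges may be spent. Making (ii) and (iii) coexist is the delicate point.
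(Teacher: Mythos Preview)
Your setup is correct and matches the paper: the base case, the reduction to the bipartite graph $B$ via Lemma~\ref{cross-cut}, and the dichotomy on $|Q|$ (large $|Q|$ gives the matching alternative via Lemma~\ref{matching-cover}) are exactly right. You have also correctly located the crux --- proving that the default colouring is strongly rainbow on a large min-degree-$p$ subgraph so that $E^*$ is a genuine $r$-expansion --- and correctly anticipated that Lemmas~\ref{spider} and~\ref{joining-path} are the tools.

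The gap is structural. You propose to peel $B$ once to $B^*$ with $\delta\ge p$ and then argue \emph{directly} that its colouring is strongly rainbow, closing a putative $C^r_{2m+1}$ ``either through the root $w$ \ldots\ or through a dominator $x$''. But the cycle-closing argument is sensitive to \emph{where} the dominator lives, and you have no control over that. If two edges of $B^*$ share a colour vertex, you must thread a rainbow path of a specific length through $B^*$, then a monotone path down to one dominator, a Lemma~\ref{joining-path} connector to the other dominator, and a spider leg back up. The connector has some even length $\le 2i$ that you only learn \emph{after} fixing both dominators, and the spider-leg length depends on the dominator's level --- so the rainbow-path length cannot be chosen in advance, and the two endpoints' dominators may sit at different levels altogether. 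Your fallback of bounding the multiplicity of each expansion vertex and passing to a matching in the colour hypergraph gives a strongly rainbow subset but destroys the min-degree condition; re-peeling afterwards has no reason to leave anything.

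What the paper does, and what your sketch is missing, is this: first restrict to $Q^+=\{y\in Q':d_B(y)\ge(mpr)^m\}$ (so Lemma~\ref{spider} applies; low-degree vertices would make $Q$ too large), assign each $y\in Q^+$ a dominator $\alpha(y)$, and then \emph{partition} $Q^+$ according to whether $\alpha(y)=w$, $\alpha(y)\in L_i$, or $\alpha(y)\in L'_i$. One of these $O(h)$ parts carries a $1/(2h)$ fraction of $B^+$. If it is the root part, your direct argument works (the paper's Claim~1). If it is an $L_i$- or $L'_i$-part, the paper groups the surviving $Q^+$-vertices by their common dominator $x$ and --- this is the missing step --- applies the \emph{induction hypothesis} to the down-graph $H_x$, a quasi-tree of height $h-i\le h-1$, with the edge set $E_x$. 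This yields a $\Gamma_x$ that already satisfies (3) and (4). A separate argument (Claims~2 and~3, using Lemma~\ref{joining-path} to link dominators at level $i$ and the inductively obtained $\delta(\Gamma_y)\ge p$ to tune the rainbow-path length \emph{after} learning the connector length) then shows that colours across different $\Gamma_x$'s are disjoint, so $\Gamma=\bigcup_x\Gamma_x$ works. The induction hypothesis supplies strong-rainbowness \emph{within} each $\Gamma_x$; the claims supply it \emph{across} them. Your outline invokes induction in the first sentence but never actually uses it; that is where the argument breaks.
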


\begin{proof}
We use induction on $h$. For the basis step, let $h=0$. Then $H$ consists of the single vertex $w$ and $E$ is a set of edges containing $w$. Let $E^*=E$ and let $S$ consist of one vertex
of $e\setminus \{w\}$ for each $e\in E^*$. It is easy to see that the claim holds.

For the induction step, let $h\geq 1$. Let $E$ be defined as in the statement of the lemma.
Let $F=\{e\setminus L_h: e\in E\}$. Then $F$ is an $(r-1)$-graph with $|F|=|E|$. Let
$Q$ be a minimum vertex cover of $F$. 
First suppose that $|Q|\geq \frac{r-1}{c^h}|E|$. 
By Lemma \ref{matching-cover}, $F$ contains a matching $F^*$ of
size at leat $\frac{|Q|}{r-1}\geq  \frac{1}{c^h}|E|$. Let $E^*$ be the set of edges of $E$ corresponding to $F^*$. Let $S=E^*\cap Q'$. It is easy to check that $E^*$ and $S$ satisfy the four conditions and we are done. We henceforth assume that
\begin{equation} \label{Q-small}
|Q|<\frac{r-1}{c^h}|E|.
\end{equation}

By Lemma \ref{cross-cut}, there exists $F'\subseteq F$ and $Q'\subseteq Q$ such that $|F'|\geq \frac{r-1}{2^{r-1}}|F|$ and that $Q'$ is a cross-cut of $F'$. Let $E'$ be the set of edges in $E$ corresponding to $F'$. Then $|E'|=|F'|$ and each
edge in $E$ intersects each of $L_h$ and $Q'$ in exactly one vertex. Let $B=\{e\cap (L_h\cup Q'): e\in E'\}$. Then $B$ satisfies the condition of Lemma \ref{default-coloring} and 
there is a bijection between edges of $B$ and edges of $E'$. In particular, 
\begin{equation}\label{B-E}
|B|=|E'|\geq \frac{r-1}{2^{r-1}}|E|.
\end{equation}
Now, $|B|\geq \frac{r-1}{2^{r-1}}|E|\geq \frac{(r-1)c^h}{2^{r-1}}|L_h|\geq 32hmr|L_h|$.
Also, by \eqref{B-E}, $|B|\geq \frac{r-1}{2^{r-1}}|E|\geq \frac{c^h}{2^{r-1}}|Q'|\geq 32hmr|Q'|$.
So, $|B|\geq 32hmr(|L_h|+|Q'|)\geq 16hmr |V(B)|$. Thus $B$ has average degree at least $32hmr$.

We now partition $Q'$ as follows.
Let $$Q^-=\{y\in Q': d_B(y)< (mpr)^m\}  \mbox{ and  } Q^+=\{y\in Q': d_B(y)\geq (mpr)^m\}.$$
Let $B^-$ be the subgraph of $B$ induced by $L_h\cup Q^-$ and
the subgraph of $B$ induced by $L_h\cup Q^+$.

\medskip

{\bf Case 1.} $|B^-|\geq \frac{|B|}{2}$.

\medskip

In this case, we have $|Q|\geq |Q^-|>\frac{|B^-|}{(mpr)^m}\geq \frac{|B|} {2(mpr)^m}\geq
\frac{(r-1)}{2^r(mpr)^m}|E|>\frac{r-1}{c^h}|E|$,
 contradicting \eqref{Q-small}.

\bigskip

{\bf Case 2.}  $|B^+|\geq \frac{|B|}{2}$.

\medskip

In this case, we partition $Q^+$ as follows. Let $y\in Q^+$. Then $N_B(y)\subseteq
L_h$ and $|N_B(y)|\geq (mpr)^m$.  
By Lemma \ref{spider}, there exists $x\in V(H)$ such that
$|V(H_x)\cap N_B(y)|\geq \frac{|N_B(y)|}{(hpr)^{h-1}}$ and such that $H_x$ contains a spider
$M$ with $p$ legs from $x$ to $N_B(y)$ using monotone paths. 
Let us call such an $x$ a {\it dominator} of $y$ in $H$.
Suppose $V(M)\cap N_B(y)=\{y_1,\ldots, y_p\}$. For each $j=1,\ldots, p$, let $f_j$ be the unique edge of $E$ containing $yy_i$. It is easy to see that $M\cup \{f_1,\ldots, f_p\}$ form a collection of $p$ internally disjoint $x,y$-paths that intersect each main level of $H$ at most once. In particular, if $x\in L_i\cup L'_i$, where $i\leq h-1$, then these paths all have length $h-i+1$.

For each $y\in Q^+$, fix a dominator $\alpha(y)$ of $y$ in $H$.
Note that either $\alpha(y)=w$ or $\alpha(y)\in L_i\cup L'_i$ for some $1\leq i\leq h-1$, since
other vertices in $H$ have degree $1$ in $H$ and cannot possibly be a dominator of $y$.
Let $Q_0=\{y\in Q^+: \alpha(y)=w\}$. For each $i=1,\ldots, h-1$, let
$Q_i=\{y\in Q^+: \alpha(y)\in L_i\}$ and $Q'_i=\{y\in Q^+: \alpha(y)\in L'_i\}$.
Then $Q_0, Q_1,Q'_1,\ldots, Q_{h-1}, Q'_{h-1}$ partition $Q^+$.  Let $B_0$ denote the subgraph of $B^+$ induced by $Q_0\cup L_h$.  For each $i=1,\ldots, h-1$, let $B_i$ denote the subgraph of $B^+$ induced by $Q_i\cup L_h$ and $B'_i$ the subgraph of $B^+$ induced by $Q'_i\cup L_h$.
Then $B_0, B_1,B'_1,\ldots, B_{h-1}, B'_{h-1}$ partition $B^+$.
One of these graphs must then have size at least $\frac{|B^+|}{2h}$.

\medskip

{\bf Subcase 2.1.}  $|B_0|\geq \frac{|B^+|}{2h}$.

\medskip

In this subcase, we have $|B_0|\geq \frac{|B|}{4h}$.
Since $V(B_0)\subseteq V(B)$ and $B$ has average degree at least $32hmr$ by earlier discussion, $B_0$ has average at least $8mr$.
By Lemma \ref{subgraph}, $B_0$ contains a subgraph $B'_0$ with $\delta(B'_0)\geq 2mr$ 
and $|B'_0|\geq \frac{|B_0|}{2}\geq \frac{|B|}{8h}$.

\medskip

{\bf Claim 1.} The default coloring $\phi$ on $B'_0$ is strongly rainbow.

\medskip

{\it Proof of Claim 1.}
Let $e,e'\in B'_0$. First suppose that they are incident in $B'_0$.
Then since $\phi$ is strongly properly
on $B'_0$ by Lemma \ref{default-coloring}, we have $\phi(e)\neq \phi(e')$. Next,
suppose $e,e'$ are independent in $B'_0$.
Suppose for contradiction that $\phi(e)\cap \phi(e')\neq \emptyset$.
Let $v\in \phi(e)\cap \phi(e')$. Since $H$ is linear,
we have $\phi(e)\cap \phi(e')=\{v\}$.
Suppose $e=xy, e'=x'y'$ where $x,x'\in L_h$ and $y,y'\in Q_0$. Since $B'_0$ has minimum degree at least $2rm$, applying Lemma \ref{rainbow-path} with $k=r-2, \ell=2m-2-2h$ and $S_0=\phi(e)\cup \phi(e')$, $B'_0$ contains a path $P$ of length $2m-2-2h$ starting at $y'$
such that $P$ is strongly rainbow under $\phi$ and that $(\bigcup_{f\in P} \phi(f))\cap (\phi(e)\cup \phi(e'))=\emptyset$. Let $y''$ denote the other endpoint of $P$; it is possible that $y''=y'$.  The set of edges of $E$ that correspond to those in $P\cup \{e,e'\}$ forms
a linear path $R$ of length $2m-2h$ in which we may view $x$ as one endpoint at one end and $y''$ as an endpoint  at the other end. Let $R_x$ be a monotone path in $H$ from $w$ to $x$.
Then $R\cup R_x$ is a linear path of length $2m-2h+h=2m-h$ with $w$ being an endpoint
at one end and $y''$ being an endpoint at the other end. Since
$y''\in Q_0$, $w$ is a dominator of $y''$. Hence there exist $p$ pairwise internally disjoint
$w,y''$-paths of length $h+1$. Since $p=2mr>|V(R\cup R_x)|$, one of these paths, say $R'$,
is internally disjoint from $R\cup R_x$. Now $R\cup R_x\cup R'$ is a linear cycle of length 
$2m+1$ in $G$, a contradiction.  This proves Claim 1.\qed

\medskip

Let $E^*$ be the set of edges in $E$ corresponding to those in $B'_0$.
Then $|E^*|=|B'_0|\geq \frac{|B_0|}{2}\geq \frac{|B^+|}{4h}\geq \frac{|B|}{8h}\geq \frac{r-1}{2^{r+2}h}|E|$, by \eqref{B-E}. So, certainly  $|E^*|\geq \frac{1}{c^h}|E|$. Since $\phi$ is
strongly rainbow on $B_0$, $E^*$ is the $r$-expansion of $B'_0$. 
Also, $\delta(B'_0)\geq 2mr=p$. 
The lemma holds with $S=V(B'_0)\cap Q'$ and $\Gamma=B'_0$. 

\medskip

{\bf Subcase 2.2} $|B_i|\geq \frac{|B^+|}{2h}$ for some $1\leq i\leq h-1$.

\medskip

Fix such an $i$. We define a subgraph $D$ of $B_i$ as follows. For each $y\in Q_i$,
by definition, $\alpha(y)\in L_i$, we include all the edges of $B_i$
from $y$ to $V(H_{\alpha(y)})\cap N_B(y)$ in $D$. 
Since $\forall y\in Q_i$, $|V(H_{\alpha(y)}\cap 
N_B(y)|\geq \frac{|N_B(y)|}{(hpr)^{h-1}}$, using \eqref{B-E} we have
\begin{equation} \label{D-bound}
|D|\geq \frac{|B_i|}{(hpr)^{h-1}}\geq \frac{|B|}{4h(hpr)^{h-1}}\geq \frac{r-1}{2^{r+1}m(mpr)^{m-1}}|E|>\frac{c^h}{2^{r+1}m(mpr)^{m-1}}|L_h|.
\end{equation}

For each $x\in L_i$, let $A_x=L_h\cap V(H_x)$. By Lemma \ref{down-graph},
$\forall x,x'\in L_i, x\neq x'$ we have $A_x\cap A_{x'}=\emptyset$.
For each $x\in L_i$,
let $D_x$ denote the subgraph of $D$ consisting of edges of $D$ that are
incident to $A_x$ and let $C_x=V(D_x)\cap Q_i$. By our definition of $D$,
the sets $C_x$ are pairwise disjoint over different vertices $x$ in $L_i$.
Let $E_x$ denote the set of edges in $E$ correspond to the edges
of $D_x$.  Then $|E_x|=|D_x|$. Furthermore, each edge in $E_x$ contains
exactly one vertex in $A_x$ and exactly one vertex in $C_x$.

For each $x\in L_i$
we call $x$ {\it light} if $|D_x|\leq c^{h-1}|A_x|$ and
{\it heavy} if $|D_x|> c^{h-1}|A_x|$. Clearly, 
the combined size of $D_x$ over all light $x$ in $L_i$ is at most
$c^{h-1}|L_h|$. By our definition of $c$, one can check that 
$\frac{c^h}{2^{r+2}m(mpr)^{m-1}}\geq 2c^{h-1}$. So by \eqref{D-bound} and
our discussion above, 
\begin{equation}  \label{heavy}
|\bigcup\{D_x: x\in L_i, x \mbox{ is heavy }\}|\geq \frac{1}{2}|D|.
\end{equation}

Now, consider any  heavy $x\in L_i$. Since $H_x$ is a leveled linear quasi-tree rooted at
$x$ of height $h-i\leq h-1$ with last level $A_x$ and $E_x$ is a set of at least 
$c^{h-1}|A_x|$ edges each of which contains one vertex in $A_x$ and $r-1$ vertices
outside $H_x$, we can apply the induction hypothesis to obtain $E^*_x$ and $S_x$,
described as below.
Here $E^*_x$ is a subset of $E_x$ with $|E^*_x|\geq \frac{1}{c^{h-1}}|E_x|$,
$S_x^*$ is a cross-cut of $E^*_x$ outside $H_x$. 
By our definition of $E$, $S_x^*$ is outside $H$.
Further, the default edge-coloring of
$\Gamma_x=\{e\cap (L_h\cup S_x): e\in E^*_x\}$ is strongly rainbow and 
either $\delta(\Gamma_x)\geq p$ or $\forall v\in S_x, d_{\Gamma_x}(v)=1$.
We say that $x$ is of {\it type 1} if $\delta(\Gamma_x)\geq p$ and that 
$x$ is of {\it type 2} otherwise. 
Observe that if $x$ is of type 2, then  $E^*_x\setminus L_h$ is a matching of
size $|E^*_x|$. Since each edge in $E^*_x\subseteq E_x$ contains a vertex of $C_x$, 
this implies that $|C_x|\geq |E^*_x|$.

Let $L_{i,1}=\{x\in L_i, x \mbox{ is heavy and is of type 1} \}$
and $L_{i,2}=\{x\in L_i, x\mbox{ is heavy and is of type 2} \}$.
Suppose first that 
$\sum_{x\in L_{i,2}} |E_x|\geq \frac{1}{4}|D|$. Then 
$\sum_{x\in L_{i,2}}  |D_x| \geq \frac{1}{4}|D|$.
By \eqref{D-bound}, 
\begin{eqnarray*}
|Q|\geq |Q_i|\geq \sum_{x\in L_{i,2}} |C_x|\geq 
\sum_{x\in L_{i,2}} |E^*_x|\geq \frac{1}{c^{h-1}} \sum_{x\in L_{i,2}} |E_x|
\geq \frac{1}{4c^{h-1}}|D|\geq \frac{r-1}{c^{h-1}2^{r+3}m(mpr)^{m-1}}|E|\geq\frac{r-1}{c^h}|E|,
\end{eqnarray*}
contradicting \eqref{Q-small}. Hence, by \eqref{heavy}, we may assume that

\begin{equation}
\sum_{x\in L_{i,1}} |D_x| \geq \frac{1}{4}|D|.
\end{equation}

Recall that $\phi$ denotes the default edge-coloring of $\partial_2(G)$. Recall also that
$\forall x,y\in L_i$, $x\neq y$, we have  $A_x\cap A_y=\emptyset$ and $C_x\cap C_y=\emptyset$. So $V(\Gamma_x)\cap V(\Gamma_y)=\emptyset$.

\medskip

{\bf Claim 2.} Let $x,y\in L_{i,1}, x\neq y$.
Let $e\in \Gamma_x$ and $f\in \Gamma_y$. Then $\phi(e)\cap \phi(f)=\emptyset$.

\medskip

{\it Proof of Claim 2.} 
By Lemma \ref{joining-path}, there exists an $x,y$-path $R_0$ of some even length $2j\leq 2i$ in $\bigcup_{t\leq i} H_t$ that intersects $L_i$ only in $x$ and $y$. 
Since $x,y\in L_{i,1}$, we have $\delta(\Gamma_x)\geq p$ and $\delta(\Gamma_y)\geq p$. Suppose for contradiction that $\phi(e)\cap \phi(f)\neq 
\emptyset$. Let $v\in \phi(e)\cap \phi(f)$. Since $H$ is linear, we have
$\phi(e)\cap \phi(f)=\{v\}$. Suppose $e=ab$ and $f=a'b'$, where $a\in A_x, b\in C_x$
and $a'\in A_y, b'\in C_y$.  Let $\ell=2m-[2j+2(h-i)+2]=2m-2-2h+2(i-j)$. Note that $\ell$ is even and satisfies $2m-2-2h\leq \ell \leq 2m-4$.  
Since $\delta(\Gamma_y)\geq p=2mr>r\ell+2r$, by Lemma \ref{rainbow-path}, there
exists a path $P$ in $\Gamma_y$ of length $\ell$ starting at $b'$ that is strongly rainbow
under $\phi$ and such that $(\bigcup_{e'\in P} \phi(e'))\cap (\phi(e)\cup \phi(f))=\emptyset$.
Let $b''$ denote the other endpoint of $P$. Since $P$ has an even length, $b''\in C_y$.
Let $P^+$ denote the set of the edges of $E$ that correspond to the edges of $P\cup \{e,f\}$.
Then $P^+$ is linear path of length $2m-2h+2(i-j)$ in $G$ where $a$ is an endpoint at one
and $b''$ is an endpoint at the other end. Furthermore, $V(P^+)\cap V(H)\subseteq
L_h$. Let $R$ be a monotone path in $H$ from $x$ to $a$. Then $R$ has length $h-i$ and
is internally disjoint from $R_0$ and $P^+$. Since $b''\in C_y$, $y$ is a dominator of $b''$.
By definition, there exist $p$ internally disjoint $y,b''$-paths that intersect each main level
of $H$ at most once. In particular these paths have length $h-i+1$. Since $p=2mr>|V(P^+\cup R\cup R_0)|$, one of these paths, say $R'$, is internally disjoint from $P^+\cup R\cup R_0$.
Now $P^+\cup R\cup R_0\cup R'$ is  a linear cycle of length $2m-2h+2(i-j)+h-i+2j+h-i+1=2m+1$ in $G$, a contradiction. This proves Claim 2. \qed

\medskip

Now by Claim 2 and earlier discussion,  $\forall x,y\in L_{i,1}, x\neq y$,
we have $V(E_x)\cap V(E_y)=\emptyset$.
Let $E^*=\bigcup\{E_x: x\in L_{i,1}\}$,
$S=\bigcup\{C_x: x\in L_{i,1}\}$, and 
$\Gamma=\bigcup\{\Gamma_x: x\in L_{i,1} \}$.
Then $|E^*|\geq \frac{1}{4}|D|$. By \eqref{D-bound}, we have
 $$|E^*|\geq \frac{r-1}{2^{r+3}m(mpr)^{m-1}}|L_h|\geq \frac{1}{c}|E|.$$
By our discussion, $\phi$ is strongly rainbow on $\Gamma$ and hence $E^*$ is the
$r$-expansion of $\Gamma$. 
It is easy to check that $E^*, S$,  and $\Gamma$ satisfy the other requirements of the Lemma.

\medskip

{\bf Subcase 2.3} $|B'_i|\geq \frac{B^+}{2h}$ for some $1\leq i\leq h-1$.

\medskip

The arguments are similar in this subcase as in Subcase 2.3,
except that the proof of an analogous statement of Claim 2 is more delicate.
As in Subcase 2.2, we define $D$ and $D_x$ analogously
with $L_i$ being replaced by $L'_i$ in the definitions. 
Let $L'_{i,1}=\{x\in L'_i, x \mbox{ is heavy and is of type 1}\}$.
Let $L'_{i,2}=\{x\in L'_i, x \mbox{ is heavy and is of type 2}\}$.
As in Subcase 2.3, we may assume that

\begin{equation}
\sum_{x\in L'_{i,1}} |D_x|\geq \frac{1}{4}|D|.
\end{equation}
  
{\bf Claim 3.} Let $x,y\in L'_{i,1}, x\neq y$.
Let $e\in \Gamma_x$ and $f\in \Gamma_y$. Then $\phi(x)\cap \phi(y)=\emptyset$.

\medskip

{\it Proof of Claim 3.} We proceed like in the proof of Claim 2, with adjustments at the end.
Since $x,y$ are of type 1, we have $\delta(\Gamma_x)\geq p$ and $\delta(\Gamma_y)\geq p$. Suppose for contradiction that $\phi(e)\cap \phi(f)\neq 
\emptyset$. Let $v\in \phi(e)\cap \phi(f)$. Then
$\phi(e)\cap \phi(f)=\{v\}$. Suppose $e=ab$ and $f=a'b'$, where $a\in A_x, b\in C_x$
and $a'\in A_y, b'\in C_y$.  Let $\ell=2m-2-2h$. 
Since $\delta(\Gamma_y)\geq p=2mr>r\ell+2r$, by Lemma \ref{rainbow-path}, there
exists a path $P$ in $\Gamma_y$ of length $\ell$ starting at $b'$ that is strongly rainbow
under $\phi$ and such that $(\bigcup_{e'\in P} \phi(e'))\cap (\phi(e)\cup \phi(f))=\emptyset$.
Let $b''$ denote the other endpoint of $P$. Since $P$ has an even length, $b''\in C_y$.
Let $P^+$ denote the set of the edges of $E$ that correspond to the edges of $P\cup \{e,f\}$.
Then $P^+$ is linear path of length $2m-2h$ in $G$ where $a$ is an endpoint at one
and $b''$ is an endpoint at the other end. Furthermore, $V(P^+)\cap V(H)\subseteq
L_h$. Let $R$ be a monotone path in $H$ from $x$ to $a$. Then $R$ has length $h-i$ and
is internally disjoint from $P^+$. Since $b''\in C_y$, $y$ is a dominator of $b''$.
By definition, there exist $p$ internally disjoint $y,b''$-paths that intersect each main level
of $H$ at most once. In particular these paths have length $h-i+1$. 
Since $p=2mr>|V(P^+\cup R)|$, one of these paths, say $R'$, is internally disjoint from $P^+\cup R$.
Now $W=P^+\cup R\cup R'$ is  a linear $x,y$-path of length $2m-2i+1$ in $G$.
Let $e_x$ denote the edge of $W$ containing $x$ and $e_y$ the edge of $W$ containing $y$.
Each of $e_x, e_y$ intersects $L_i$ in exactly one vertex. Suppose $e_x\cap L_i=\{x^*\}$
and $e_y\cap L_i=\{y^*\}$. Then $V(W)\cap (\bigcup_{j=0}^i V(H_j))=\{x^*,y^*\}$.

By Lemma \ref{joining-path} there is an $x^*,y^*$-path $R_0$ of length $2t\leq 2i$ in
$\bigcup_{j=0}^i H_j$ such that $V(R_0)\cap L_i=\{x^*, y^*\}$.  If
$t=i$ then $W\cup R_0$ is a linear cycle in $G$ of length $2m+1$, a contradiction.
So suppose $t<i$. The idea now is to keep $R, R_0$ and $e_y$ and redefine $P$ and $R'$
to get a linear cycle of length $2m+1$.
Let $\ell=2m-2h+2(i-t)-3$. Note that $\ell>0$ and is odd.
Since $\delta(\Gamma_y)\geq p=2mr>r\ell+2r$, by Lemma \ref{rainbow-path}, there
exists a path $P$ in $\Gamma_y$ of length $\ell$ starting at $a'$ that is strongly rainbow
under $\phi$ and such that $(\bigcup_{e'\in P} \phi(e'))\cap (\phi(e)\cup \phi(f))=\emptyset$.
Let $b''$ denote the other endpoint of $P$. Since $\ell$ is odd, $b''\in C_y$.
Let $P^+$ denote the set of the edges of $E$ that correspond to the edges of $P\cup \{e,f\}$.
Then $P^+$ is linear path of length $2m-2h+2(i-t)-1$ in $G$ where $a$ is an endpoint at one
and $b''$ is an endpoint at the other end. Furthermore, $V(P^+)\cap V(H)\subseteq
L_h$.  As before, there are $p$ internally disjoint $y,b''$-paths of length $h-i+1$ hitting each main level at most once. Since $p=2mr>|V(P^+\cup R\cup e_y)|$, one of these paths, say $R'$,
is internally disjoint from $P^+\cup R\cup e_y$. It is also internally disjoint from $R_0$ 
by the definition of $R_0$. Now $P^+\cup R\cup R_0\cup \{e_y\}\cup R'$ is a linear cycle of
length $2m+1$ in $G$, a contradiction.
 \qed

Now by Claim 3 and earlier discussion,  $\forall x,y\in L'_{i,1}, x\neq y$,
we have $V(E_x)\cap V(E_y)=\emptyset$.
Let $E^*=\bigcup\{E_x: x\in L'_{i,1}\}$,
$S=\bigcup\{A_x: x\in L'_{i,1}\}$, and 
$\Gamma=\bigcup\{\Gamma_x: x\in L'_{i,1}\}$.
Then $|E^*|\geq \frac{1}{4}|D|$. By \eqref{D-bound}, we have
 $$|E^*|\geq \frac{r-1}{2^{r+3}m(mpr)^{m-1}}|E|\geq \frac{1}{c}|E|.$$
As in Subcase 2.2, it is easy to check that $E^*, S$,  and $\Gamma$ satisfy the four conditions of the Lemma.

\end{proof}

\begin{theorem} \label{odd-cycles}
Let $m,r$ be positive integers where $m\geq 2$ and $r\geq 3$.
There exist a positive real $c'_{m,r}$ and a positive integer $n_2$ such that
for all $n\geq n_2$ we have $ex_L(n,C^r_{2m+1})\leq c'_{m,r} n^{1+\frac{1}{m}}$.
\end{theorem}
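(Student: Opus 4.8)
The plan is to mirror the structure of the even cycle argument (Theorem~\ref{even-cycles}), using Lemma~\ref{odd-expansion} in place of Lemma~\ref{even-expansion} as the engine for growing levels. First I would set $p=2mr$ and $c=2^{r+2}(mpr)^m$ as in Lemma~\ref{odd-expansion}, then define $c'_{m,r}$ to be a suitably large constant (of order $m^{r-1}c^m$) and choose $n_2$ so that $c'_{m,r}n_2^{1/m}\geq n_0$, the threshold from Lemma~\ref{split}. Given an $n$-vertex linear $r$-graph $G$ with at least $c'_{m,r}n^{1+1/m}$ edges, the average degree is at least $rc'_{m,r}n^{1/m}$, so by Lemma~\ref{min-degree} we pass to a subgraph $G'$ with $\delta(G')\geq c'_{m,r}n^{1/m}$; setting $N=n(G')$ we have $\delta(G')\geq c'_{m,r}N^{1/m}$ and $N\geq n_0$. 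By Lemma~\ref{split} with $t=m$, partition $V(G')$ into $S_1,\dots,S_m$ so that every vertex $u$ sees at least $\frac{c'_{m,r}}{2m^{r-1}}N^{1/m}=:\lambda N^{1/m}$ edges of its link lying entirely inside each $G'[S_i]$, where $\lambda$ is chosen so that $\lambda\geq c^m$ (this dictates the size of $c'_{m,r}$).

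Next I would build a leveled linear quasi-tree $H$ of height $m$ rooted at a vertex $w\in S_1$, with main levels $L_0=\{w\},L_1,\dots,L_m$ and companion levels $L'_0,\dots,L'_{m-1}$, maintaining the invariant that $|L_{i+1}|\geq N^{1/m}|L_i|$; this forces $|L_m|\geq N$, an impossibility. To grow from level $i$ to level $i+1$, let $E_i$ be the set of edges of $G'$ meeting $L_i$ in one vertex and $S_{i+1}$ (or some fixed reservoir of yet-unused colour classes) in the remaining $r-1$ vertices, chosen via the link-graph guarantee from Lemma~\ref{split}, so $|E_i|\geq \lambda N^{1/m}|L_i|\geq c^m|L_i|\geq c^i|L_i|$. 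Since $C^r_{2m+1}\not\subseteq G'$, apply Lemma~\ref{odd-expansion} (with $h=i\leq m-1$) to obtain $E^*_i\subseteq E_i$ with $|E^*_i|\geq c^{-i}|E_i|$, together with a cross-cut $S$ playing the role of the companion level $L'_i$, such that $H\cup E^*_i$ is a leveled linear quasi-tree of height $i+1$; put $L'_i=S$ and let $L_{i+1}$ consist of one representative vertex from each member of $E^*_i\setminus(L_i\cup S)$. Then $|L_{i+1}|=|E^*_i|\geq c^{-i}\lambda N^{1/m}|L_i|\geq c^{-i}c^m N^{1/m}|L_i|\geq N^{1/m}|L_i|$ since $i\leq m-1$, which preserves the invariant.

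The one genuine subtlety — and the step I expect to be the main obstacle — is making sure that at each step Lemma~\ref{odd-expansion} is being applied with the hypotheses it actually requires: the new edges $E_i$ must contain $r-1$ vertices \emph{outside} the entire quasi-tree $H$ constructed so far, not merely outside the current level. In the even case this was handled because the tree after step $i$ lives in $S_1\cup\dots\cup S_{i+1}$ and we pull the next batch of edges into $S_{i+1}$ (or rather the fresh class $S_{i+2}$); here I would similarly arrange that the $i$-th segment $H_i$, including its companion level $L'_i$, is drawn from colour classes not reused at later stages. Since the quasi-tree of height $m$ uses main levels $L_0,\dots,L_m$ and companion levels $L'_0,\dots,L'_{m-1}$, a partition into $m$ classes via Lemma~\ref{split} does not immediately give $m$ \emph{disjoint} reservoirs for both the main and companion levels; the cleanest fix is to apply Lemma~\ref{split} with $t$ larger than $m$ (say $t=2m$ or $t=m+1$ with a small reorganisation), reserving some classes for companion levels, and to observe that the companion vertices produced by the cross-cut $S$ in Lemma~\ref{odd-expansion} are automatically outside $H$ by construction of $E$. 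Adjusting the constant $\lambda$ accordingly — so that the per-vertex link bound still dominates $c^m$ — completes the argument, and the contradiction $|L_m|\geq N > N$ finishes the proof.
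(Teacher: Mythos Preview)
Your proposal is correct and follows essentially the same route as the paper's proof. The one point where you hesitate is unnecessary: $t=m$ classes already suffice, because the entire $i$-th segment $H_i$ --- including the companion level $L'_i$ --- consists (apart from $L_i$ itself) of vertices among the $r-1$ ``new'' vertices of the edges in $E_i$, which you chose to lie in the fresh class $S_{i+1}$; you in fact state this yourself at the end, so no enlargement of $t$ or further reorganisation is needed.
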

\begin{proof}
We follow the steps in Theorem \ref{even-cycles}, using Lemma \ref{odd-expansion} in place of Lemma \ref{even-expansion}.  Let $p=2mr$. Let $c=2^{r+2}(mpr)^m$ as in Lemma \ref{odd-expansion}. 
Let $c'_{m,r}=2m^{r-1}c^m$. Choose $n_2$ such that $c'_{m,r}n_2^\frac{1}{m}\geq n_0$, where $n_0$ is given in Lemma \ref{split}. Let $G$ be an $n$-vertex linear $r$-graph with
at least $c'_{m,r}n^{1+\frac{1}{m}}$ edges, where $n\geq n_2$. Suppose that 
$G$ does not contain a copy of $C^r_{2m+1}$, we derive a contradiction.
By our assumption, $G$ has average degree at least $rc'_{m,r} n^{\frac{1}{m}}$.
By Lemma \ref{min-degree}, there exists a subgraph $G_0$ of $G$ with $\delta(G_0)\geq 
c'_{m,r}n^{\frac{1}{m}}$. Let $N=n(G_0)$. Then $N\geq c'_{m,r} n^\frac{1}{m}\geq n_0$ and $\delta(G')\geq c'_{m,r} N^{\frac{1}{m}}$. By Lemma \ref{split} (with $t=m$),
there exists a partition of $V(G')$ into $S_0,\ldots, S_{m-1}$
such that for each $u\in V(G')$ and $i\in \{0,\ldots, m-1\}$, we have $|L_{G'}(u)\cap G'[S_i]|\geq
\frac{c'_{m,r}}{2m^{r-1}} N^{\frac{1}{m}}=c^m N^{\frac{1}{m}}$.

Let $w$ be any vertex in $S_0$. Let $L_0=\{w\}$. Inside $G'$,
we will construct a leveled linear quasi tree $H$ of height $m$ 
rooted at $w$ with segments $H_0,\ldots, H_{m-1}$ and
main levels $L_0, L_1,\ldots, L_m$  such that $\forall i\in \{0,\ldots, m-1\}$
$V(H_i)\subseteq S_i$. (Note that this means $\forall i\in [m], L_i\subseteq S_{i-1}$). Furthermore, we will maintain that $\forall i\in [m]$, $|L_i|\geq N^{\frac{1}{m}}|L_{i-1}|$. This will imply that $|L_m|\geq N$, which is a contradiction.

We construct $H$ as follows.
Let $H_0$ consist  of the edges of $G'[S_0]$ containing $w$.
By our assumption, $|H_0|\geq c^m N^{\frac{1}{m}}\geq N^{\frac{1}{m}}$, by our definition of
$c$.  Let $L_1$ consists of a vertex from $e\setminus \{w\}$ for each $e\in H_0$.
We have $|L_1|=|H_0|\geq N^{\frac{1}{m}}|L_0|$.  In general, suppose $1\leq i\leq m-1$ and 
suppose we have defined $H_0,\ldots, H_{i-1}$ and $L_0, L_1,\ldots, L_i$ that
satisfy the requirements.
Let $E$ denote the set of edges in $G'$ that contain one vertex in $L_i\subseteq S_{i-1}$
and $r-1$ vertices in $S_i$. By the definition of the partition $(S_0,\ldots, S_{m-1})$, 
$|E|\geq c^m N^{\frac{1}{m}} |L_i|\geq c^i|L_i|$.
Since $C^r_{2m+1}\not\subseteq G'$, by Lemma \ref{odd-expansion},  there exists
a subset $E^*\subseteq E$ such that (1) $|E^*|\geq \frac{1}{c^i} |E|$,
(2) $E^*$ is the $r$-expansion of some bipartite $2$-graph $\Gamma$ with one part
in $L_i$ and the other part outside $\bigcup_{j=0}^{i-1} H_{i-1}$.
Now, let $H_i$ be the $r$-graph formed by $E^*$ and let
$L_i$ consist of one vertex from $e\setminus V(\Gamma)$ for each $e\in E^*$
(note that this implies that $|L_i|=|E^*|$).
Now, $\bigcup_{j=0}^i H_i$ is a leveled linear quasi-tree in $G'$ rooted at $w$ with heigh $i$
and main levels $L_0,L_1,\ldots, L_i$.  Furthermore, $|L_i|=|E^*|\geq \frac{1}{c^i}
|E|\geq \frac{c^m}{c^i} N^{\frac{1}{m}}|L_i|\geq N^{\frac{1}{m}}|L_i|$.
We can continue like this to construct $H$ and derive the desired contradiction.
\end{proof}

%%%%%%%%%%%%%%%%%%%%%%%%%%%%%%%%%%%%%%%%%%%%%%%%%%%

\section{Cycle-complete Ramsey numbers} \label{cycle-complete}

Given two $r$-graphs $G$ and $H$, the {\it Ramsey number} $R(G,H)$ is the smallest positive
integer $n$ such that in every coloring of the edges of $K^r_n$ using two colors red and blue
there exists either a red copy of $G$ or a blue copy of $H$.
As mentioned in the introduction, part of the motivitation behind our study of the
linear Tur\'an number of linear cycles comes from the study by Kostochka, Mubayi, and Vertra\"ete \cite{KMV} on
the hypergraph Ramsey number of a linear triangle versus a complete graph.
Their work is further inspired by the work of graph Ramsey number $R(C_3, K_t)$.
A celebrated result of Kim \cite{Kim} together with earlier upper bounds by 
Ajtai, Koml\'os, and Szemer\'edi \cite{AKS} shows that 
$$R(C_3, K_t)=\Theta(\frac{t^2}{\log t}), \mbox { as } t\to \infty.$$ 
Kostochka, Mubayi, and Verstra\"ete's main theorem \cite{KMV} is

\begin{theorem} {\bf \cite{KMV}}
There exist constants $a,b_r>0$ such that for all $t\geq 3$,
$$\frac{a t^{\frac{3}{2}}}{(\log t)^\frac{3}{4}} \leq R(C^3_3, K^3_t)\leq b_3 t^\frac{3}{2},$$
and for $r\geq 4$,
$$\frac{t^{\frac{3}{2}}}{(\log t)^{\frac{3}{4}+o(1)}}\leq R(C^r_3, K^r_t)
\leq b_r t^{\frac{3}{2}}.$$
\end{theorem}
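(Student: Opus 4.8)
The plan is to prove the two bounds separately, after translating both into statements about the \emph{independence number} $\alpha(G)$ of $C^r_3$-free $r$-graphs $G$ (the largest size of a vertex set spanning no edge). In a $2$-colouring of $K^r_n$ let $G$ be the red $r$-graph; then ``no blue $K^r_t$'' is precisely the statement $\alpha(G)<t$. Hence the upper bound $R(C^r_3,K^r_t)\le b_r t^{3/2}$ is equivalent to: every $C^r_3$-free $r$-graph on $n$ vertices satisfies $\alpha(G)\ge c_r n^{2/3}$; and the lower bound is equivalent to constructing, for every $n$, a $C^r_3$-free $r$-graph on $n$ vertices with $\alpha(G)=O(n^{2/3}(\log n)^{1/2})$ when $r=3$, and $\alpha(G)=n^{2/3}(\log n)^{1/2+o(1)}$ when $r\ge 4$.

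\textbf{Upper bound.} I would peel off a large independent set in one pass, using as input the exact Tur\'an result $ex(n,C^r_3)=\binom{n}{r}-\binom{n-1}{r}=\binom{n-1}{r-1}$ (the $m=1$ case of the formula of F\"uredi--Jiang and Kostochka--Mubayi--Verstra\"ete \cite{FJ,KMV-shadows}) together with its stability form. The first step is a dichotomy: a $C^r_3$-free $r$-graph is either \emph{concentrated} --- almost all edges meet a bounded ``core'' $W$ --- or \emph{spread out} --- the $(r-1)$-wise codegrees are small, so $G$ is essentially linear. In the concentrated case $V(G)\setminus W$ is already an independent set containing almost all vertices, far more than $n^{2/3}$. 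In the spread-out case, the second step is to pass (by a random deletion of vertices, costing only a constant factor) to a sub-$r$-graph with all $(r-1)$-codegrees $O(1)$; the third step is a local analysis of links: because $G$ contains no linear triangle through any vertex $v$, the neighbourhood $N_G(v)$ induces a near-linear sub-$r$-graph; and the fourth step is a Spencer-type random sparsening inside such a neighbourhood, boosted by the local sparseness that $C^r_3$-freeness forces (in the spirit of the Ajtai--Koml\'os--Pintz--Spencer--Szemer\'edi bound for uncrowded hypergraphs), which delivers an independent set of size $\Omega_r(n^{2/3})$. The \textbf{main obstacle} is this second case: $C^r_3$-freeness does not make $G$ linear, so one must make quantitative the passage between ``dense and almost star-like'' and ``sparse and almost linear'', and one must convert ``no linear triangle through $v$'' into a genuine codegree bound inside $N_G(v)$ rather than a merely qualitative restriction.

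\textbf{Lower bound.} The first step is the naive probabilistic construction: take the random $r$-graph $G^r(N,p)$ and delete one vertex from each copy of $C^r_3$; if $p$ is chosen so that the expected number of copies of $C^r_3$ (of which there are $\Theta(N^{3r-3}p^3)$ in expectation) is $o(N)$ while $\binom{N}{t}(1-p)^{\binom{t}{r}}=o(1)$, one gets a $C^r_3$-free $r$-graph on $\Omega(N)$ vertices with $\alpha<t$ --- but only for $N$ up to $\Theta(t^{3(r-1)/(3r-4)})$ (i.e.\ $t^{6/5}$ when $r=3$), polynomially short of $t^{3/2}$. The second step, to reach the stated exponent for $r=3$, is to \emph{blow up} a near-optimal construction for the graph Ramsey number $R(C_3,K_{\cdot})$: starting from a triangle-free graph $\Gamma$ on $m$ vertices with $\alpha(\Gamma)=O(\sqrt{m\log m})$, build a $3$-graph on the vertex set $V(\Gamma)\times[b]$ whose edges encode the edges of $\Gamma$ together with the ``column'' structure, arranged so that a linear triangle in the $3$-graph would force a triangle in $\Gamma$; the optimal choice $b\approx\sqrt m$ gives $n=mb\approx m^{3/2}$ vertices and $\alpha=O(n^{2/3}(\log n)^{1/2})$, hence $R(C^3_3,K^3_t)=\Omega(t^{3/2}/(\log t)^{3/4})$. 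For $r\ge 4$ the third step replaces one ingredient of the blow-up by a Ruzsa--Szemer\'edi/Behrend-type \cite{RS,Beh} configuration to control the small dense subhypergraphs, which costs a $(\log t)^{o(1)}$ factor and accounts for the weaker $(\log t)^{3/4+o(1)}$ in the statement. The subtle point here is twofold: verifying that the blow-up is genuinely $C^r_3$-free (no linear triangle may use two vertices of one column), and carrying out the first- and second-moment bookkeeping showing the blown-up object still has no independent set of size $t$.
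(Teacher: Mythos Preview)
This theorem is not proved in the present paper at all: it is quoted verbatim from \cite{KMV} as background motivation for the authors' own results on $R(C^r_\ell,K^r_t)$ with $\ell\ge 4$. There is therefore no ``paper's own proof'' to compare your proposal against; the paper simply cites the result and moves on.

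That said, a few remarks on your sketch relative to what \cite{KMV} actually does. For the \textbf{upper bound}, your first step --- invoking the exact Tur\'an number $ex(n,C^r_3)=\binom{n-1}{r-1}$ and its stability --- is not how \cite{KMV} proceeds, and it is not clear your dichotomy follows from stability in the form you need: the extremal configuration is a full star, but being ``far from a star'' does not immediately bound $(r-1)$-codegrees. The actual argument in \cite{KMV} is closer in spirit to what the present paper does in Section~\ref{cycle-complete} for longer cycles: first a sunflower-type contraction (cf.\ Lemma~\ref{contraction} here) reduces to a hypergraph with bounded pair degree, then one passes to the $2$-shadow and exploits local sparseness. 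Your step~4 invoking an AKPSS/Spencer-type bound is in the right neighbourhood, but you have not said how $C^r_3$-freeness yields the uncrowdedness hypothesis those bounds require; that is the heart of the matter, not an afterthought.

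For the \textbf{lower bound}, your blow-up of a near-Ramsey triangle-free graph is a natural idea, but as you describe it the verification that the $3$-graph is $C^3_3$-free is left entirely to the reader, and the independence-number calculation is only asserted. The construction in \cite{KMV} is not a blow-up of this kind; it is a more delicate random (and, for $r\ge 4$, partly algebraic via Behrend-type sets) construction, and the $(\log t)^{3/4}$ vs.\ $(\log t)^{3/4+o(1)}$ discrepancy between $r=3$ and $r\ge 4$ arises from genuinely different ingredients rather than from ``replacing one ingredient'' as your sketch suggests. If you want to pursue your blow-up route, you would need to specify the edge set precisely and prove both properties; as written it is a plan, not a proof.
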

In addition, they showed
\begin{theorem}{\bf \cite{KMV}}  \label{KMV-lower}
For fixed $r,k\geq 3$, $$R(C_k, K^r_t)=\Omega^*(t^{1+\frac{1}{3k-1}}),
\mbox{ as } t\to \infty.$$
There exists a constant $c_r>0$ such that
$$R(C_5, K^r_t)\geq c_r(\frac{t}{\ln t})^\frac{5}{4}, \mbox{ as } t\to\infty.$$
\end{theorem}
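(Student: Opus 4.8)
The plan is the standard probabilistic ``random object plus alteration'' argument. The starting point is the reformulation: for any $n$ one has $R(C^r_k,K^r_t)>n$ exactly when there is an $r$-graph $\mathcal{G}$ on $n$ vertices that contains no copy of $C^r_k$ and whose \emph{independence number} $\alpha(\mathcal{G})$ --- the largest size of a vertex set spanning no edge of $\mathcal{G}$ --- is at most $t-1$. (Colour the edges of $\mathcal{G}$ red and the remaining $r$-sets blue: a red $C^r_k$ is exactly a copy of $C^r_k$ in $\mathcal{G}$, and a blue $K^r_t$ is exactly an independent set of size $t$.) So the task is to build such a $\mathcal{G}$ with $n$ as large as possible in terms of $t$.

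For the general bound I would let $\mathcal{G}_0$ be the random $r$-graph on $[n]$ in which each of the $\binom{n}{r}$ possible edges is included independently with probability $p$, and then delete one vertex from each copy of $C^r_k$ to destroy them all. The value of $p$ is dictated by two estimates. Since $C^r_k$ has $k(r-1)$ vertices and $k$ edges, the expected number of its copies in $\mathcal{G}_0$ is $O(n^{k(r-1)}p^{k})$; choosing $p$ small enough that this is at most $n/4$, Markov's inequality gives with probability at least $1/2$ an induced subgraph $\mathcal{G}$ on at least $n/2$ vertices with no $C^r_k$. On the other hand the expected number of independent $t$-sets of $\mathcal{G}_0$ is $\binom{n}{t}(1-p)^{\binom{t}{r}}\le\exp(t\ln n-\binom{t}{r}p)$, which is $o(1)$ as soon as $p\ge C\ln n/t^{r-1}$ for a suitable constant $C=C(r)$; since passing to an induced subgraph cannot raise the independence number, $\mathcal{G}$ also satisfies $\alpha(\mathcal{G})<t$ with probability tending to $1$, so a graph with both properties exists. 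The two demands on $p$ are compatible exactly when $\ln n/t^{r-1}\lesssim n^{1/k-(r-1)}$, i.e.\ for $n$ up to $\Theta^*(t^{k(r-1)/(k(r-1)-1)})$, which already yields $R(C^r_k,K^r_t)=\Omega^*(t^{1+1/(k(r-1)-1)})$. This matches the stated exponent $1+\frac1{3k-1}$ when $r=4$ and beats it when $r=3$; for larger $r$ the plain random $r$-graph is too sparse to compete, and one replaces $\mathcal{G}_0$ by a cleverer, essentially uniformity-independent construction (as in \cite{KMV}) run through the same alteration step.

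For the sharper estimate $R(C^r_5,K^r_t)\ge c_r(t/\ln t)^{5/4}$ the random $r$-graph is far from extremal, so instead I would start from a \emph{dense} $C^r_5$-free linear $r$-graph. By Theorem~\ref{C5} (and the construction behind its lower bound) there are $C^3_5$-free linear $3$-graphs on $n$ vertices with $\Omega(n^{3/2})$ edges, and the point is that such a graph --- together with its $r$-uniform analogue --- can be taken pseudorandom enough that one can bound its independence number, using the available pseudorandomness, by $\widetilde O(n^{4/5})$. Feeding this into the reformulation and choosing $n$ with $n^{4/5}\asymp t/\ln t$ then gives the claimed bound. The main obstacle throughout is the tension between being ``sparse enough to be (made) $C^r_k$-free'' and ``dense enough to force a small independence number'': the pure random $r$-graph does not optimise this trade-off, so the genuinely delicate steps are (i) designing the right auxiliary (pseudo)random object that contains no linear $C^r_k$ while staying dense, and (ii) proving the matching sublinear bound on its independence number --- precisely the pseudorandom estimate needed for the $C_5$ bound.
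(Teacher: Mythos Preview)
This theorem is quoted from \cite{KMV} and is not proved in the present paper; there is no proof here against which to compare your proposal.

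That said, your proposal as written has genuine gaps. For the first assertion, the random $r$-graph with deletion yields the exponent $1+1/(k(r-1)-1)$, which, as you correctly note, matches or exceeds $1+1/(3k-1)$ only when $r\le 4$. For $r\ge 5$ you simply invoke ``a cleverer, essentially uniformity-independent construction (as in \cite{KMV})'' without saying what it is or why it works. Since the whole point of the theorem---as the paper stresses immediately after stating it---is that the exponent is bounded away from $1$ by a constant \emph{independent of $r$}, that construction is the entire content of the result for large $r$, and your proposal does not supply it.

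For the $C^r_5$ bound, the sketch is not a proof either: you assert that a dense $C^r_5$-free linear $r$-graph ``can be taken pseudorandom enough'' to force $\alpha(\mathcal G)=\widetilde O(n^{4/5})$, but you neither name the construction, nor the pseudorandomness property, nor the tool that converts it into an independence-number bound, nor explain why $4/5$ is the correct exponent. The lower bound in Theorem~\ref{C5} guarantees only the edge count, not any control on independent sets. These missing steps are exactly the substance of the argument, so what you have written identifies the target rather than reaching it.
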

Here the authors use $f=O^*(g)$ to denote that for some constant $c>0, f(t)=O((\ln t)^c g(t))$,
and $f=\Omega^*(g)$ is equivalent to $g=O^*(f)$.  
The key point of Theorem \ref{KMV-lower} is that the exponent $1+\frac{1}{3k-1}$ of $t$ is 
bounded away from $1$ by a constant independent of $r$.
The authors made the following conjecture.

\begin{conjecture} {\bf \cite{KMV}}
For all fixed $r\geq 3$, $R(C_3, K^r_t)=o(t^{3/2})$ and $R(C_5, K^r_t)=O(t^{5/4})$, as $t\to \infty$.
\end{conjecture}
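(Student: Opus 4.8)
\medskip

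\noindent\textbf{Remark (plan of attack; the conjecture is open).}
It is cleanest to pass to complements. In a red/blue colouring of $K^r_n$ with no red $C^r_3$ (resp.\ no red $C^r_5$), a blue $K^r_t$ is precisely an \emph{independent set} of size $t$ in the red $r$-graph $G$, i.e.\ a $t$-set of vertices containing no edge of $G$. Writing $\alpha(G)$ for the independence number, one checks that $R(C^r_3,K^r_t)=o(t^{3/2})$ is \emph{equivalent} to the assertion that every $C^r_3$-free $r$-graph on $n$ vertices has $\alpha=\omega(n^{2/3})$, and that $R(C^r_5,K^r_t)=O(t^{5/4})$ is equivalent to every $C^r_5$-free $r$-graph on $n$ vertices having $\alpha=\Omega(n^{4/5})$. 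The presently known bounds --- $O(t^{3/2})$ from \cite{KMV} in the first case, and the $O(t^{2})$ bound that the results of Section~\ref{cycle-complete} give at $m=2$ in the second --- amount respectively to $\alpha=\Omega(n^{2/3})$ and $\alpha=\Omega(n^{1/2})$, so one must extract a super-constant gain in the first case and a genuine polynomial gain in the second.

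For the linear-triangle case the natural engine is the semi-random (``nibble'') method for independent sets in sparse hypergraphs with few small subconfigurations, due to Ajtai--Koml\'os--Pintz--Spencer--Szemer\'edi and refined by Duke--Lefmann--R\"odl and (for graphs) by Shearer: a linear $r$-graph on $n$ vertices of average degree $d$ that is free of short loose cycles has an independent set of size $\Omega_r\bigl(n(\log d/d)^{1/(r-1)}\bigr)$, a $(\log d)^{1/(r-1)}$ factor beyond the trivial bound, and loose-triangle-freeness supplies exactly the local sparsity that drives the logarithmic gain. The plan is: (i) separate the structured part --- if $G$ is dense, a hypergraph removal/regularity argument for $C^r_3$ in the spirit of \cite{RS} either produces a large independent set outright, as already happens in the Ruzsa--Szemer\'edi example, where two colour classes form an independent set of linear size, or else passes to a large vertex set inducing a sparse, essentially linear $C^r_3$-free subhypergraph with few loose quadrilaterals; (ii) on the cleaned piece apply the uncrowded-hypergraph bound with $d=o(n^{2/3})$ to conclude $\alpha=\omega(n^{2/3})$. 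The main obstacle is step (i): loose-triangle-freeness is a far weaker local condition than triangle-freeness for graphs, so guaranteeing that the cleaning leaves $\Omega(n)$ vertices inducing a linear graph of average degree $o(n^{2/3})$ --- rather than merely $o(n)$, which is all a removal lemma gives directly --- requires new input.

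The linear-pentagon case is substantially harder, and closing the gap from $O(t^2)$ to $O(t^{5/4})$ is where the real difficulty lies. Note that even the sharp edge bound $ex_L(n,C^r_5)=O(n^{3/2})$ of Theorem~\ref{C5} (equivalently, the Tur\'an number of F\"uredi--Jiang and Kostochka--Mubayi--Verstra\"ete in the relevant regime), fed into the trivial independent-set estimate, gives only $\alpha=\Omega\bigl(n(n/|E(G)|)^{1/(r-1)}\bigr)=\Omega(n^{3/4})$ when $r=3$, short of the target $n^{4/5}$; so the $C^r_5$-free structure must be exploited beyond edge counting. The plan is: (i) reduce to $G$ linear with bounded maximum degree, by deleting vertices of degree above $\Theta(n^{3/4})$ and accounting for them by induction on $n$, then removing pairs of large codegree, while retaining a constant fraction of whatever obstructs large independence; (ii) on the resulting linear $C^r_5$-free $G'$ run a nibble \emph{adapted to the absence of loose $5$-cycles}, in which the $n^{3/2}$ edge ceiling (forcing the links $\cL_{G'}(v)$ to have small codegrees) together with $C^r_5$-freeness permits bolder random choices at each step than the generic analysis allows, producing a polynomial improvement. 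The main obstacle --- and the reason the conjecture is still open --- is quantitative: one needs an independent-set bound for loose-$C_5$-free linear $r$-graphs that beats Ajtai--Koml\'os--Pintz--Spencer--Szemer\'edi by a polynomial factor, and it is unclear whether forbidding only loose $5$-cycles, without also forbidding loose $3$- and $4$-cycles, is enough to force such a gain; identifying the correct auxiliary structure to which $G$ should be cleaned is the crux.
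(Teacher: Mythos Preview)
This statement is a \emph{conjecture} quoted from \cite{KMV}; the paper does not prove it, and indeed the sentence immediately following the conjecture makes clear that it remains open. So there is no ``paper's own proof'' against which to compare your proposal. You correctly recognise this and, appropriately, offer a research plan rather than a proof.

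Your reformulation in terms of independence numbers is correct, and the strategy you outline---clean to a sparse linear hypergraph, then exploit local sparsity via a semi-random/uncrowded argument to beat the trivial bound---is the natural line of attack and is consonant with the spirit of Section~\ref{cycle-complete}. One small correction: you attribute to the paper an $O(t^2)$ bound for $R(C^r_5,K^r_t)$, corresponding to $\alpha=\Omega(n^{1/2})$, but in fact the reduction to a linear $C^r_5$-free hypergraph together with the linear Tur\'an bound $ex_L(n,C^r_5)=O(n^{3/2})$ and the greedy hypergraph independence estimate already yields $\alpha=\Omega\bigl(n^{1-1/(2(r-1))}\bigr)$, i.e.\ $R(C^r_5,K^r_t)=O\bigl(t^{(2r-2)/(2r-3)}\bigr)$, which for $r=3$ is $O(t^{4/3})$ rather than $O(t^2)$; your own computation later in the proposal essentially says this. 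The genuine obstacle you identify---that forbidding only loose $5$-cycles (and not also $3$- and $4$-cycles) may not supply enough local sparsity to extract a further polynomial gain from the nibble---is exactly the crux, and remains unresolved.
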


Using our bounds on the linear Tur\'an numbers, we can quickly derive nontrivial upper bounds
on $R(C^r_\ell, K^r_t)$ for all $r, \ell\geq 3$.
Before getting into that, we give some recount on the cycle-complete Ramsey 
numbers of graphs. As mentioned above, 
the behavior of $R(C_3, K_t)$ is now quite well understood, particularly with
the recent deep works in \cite{BK1}, \cite{FGM}.  For longer cycles, the 
best known upper bounds are $R(C_{2m},K_t)=O ((\frac{t}{\ln t})^{\frac{m}{m-1}})$
due to  Caro et al \cite{caro} and $R(C_{2m+1}, K_t)=O(\frac{t^{\frac{m+1}{m}}}{(\ln t)^{1/m}})$, due
to Sudakov \cite{sudakov} and Li and Zang \cite{Li-Zang}. The best known lower bound is
$R(C_\ell, K_t)=\Omega(\frac{t^{\frac{\ell-1}{\ell-2}}}{\ln t})$, due to Bohman and Keevash \cite{BK}. 
 
We now obtain some upper bounds on $R(C^r_\ell, K^r_t)$ 
using linear Tur\'an numbers and a reduction process via the well-known
sunflower lemma. A sunflower (or $\Delta$-system) $\cF$  with core $C$  is a collection of 
distinct sets $A_1,\ldots, A_p$ such that $\forall i,j\in [p]$ we have $A_i\cap A_j=C$. We call the $A_i$'s
{\it members} of the sunflower. If a sunflower has $p$ members and the core has size $a$,
then we call it a $(a,p)$-sunflower.
Note that the core is allowed to be empty and hence a matching  is considered to be a sunflower. 

\begin{lemma} {\bf (Sunflower Lemma \cite{ER})} \label{sunflower}
If $\cF$ is a collection of sets of size at most $k$ and $|\cF|\geq k!(p-1)^k$, then
$\cF$ contains a sunflower with $p$ members.
\end{lemma}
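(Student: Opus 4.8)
The plan is to prove the Sunflower Lemma by induction on $k$, the maximum cardinality of a member of $\cF$. First I would dispose of the base case $k=1$: here every member of $\cF$ has size at most one, so any $p$ distinct members are automatically pairwise disjoint and hence form a sunflower with empty core; the only point to check is that the numerical hypothesis $|\cF|\ge k!(p-1)^k$ supplies the required number of members, which is a routine (and mildly delicate, as far as the exact constant goes) count.

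For the inductive step I would assume $k\ge 2$ and that the statement holds for all families whose members have size at most $k-1$, and then take a \emph{maximal} subcollection $\cM=\{A_1,\dots,A_s\}\subseteq\cF$ consisting of pairwise disjoint sets. If $s\ge p$ we are immediately done, since $\cM$ is then a sunflower with $p$ members and empty core. So the substantive case is $s\le p-1$; put $Y=A_1\cup\dots\cup A_s$, so that $|Y|\le k(p-1)$.

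The key step exploits the maximality of $\cM$: every member of $\cF$ must meet $Y$ (otherwise it could be added to $\cM$), so by pigeonhole some single element $y\in Y$ lies in at least $|\cF|/|Y|\ge k!(p-1)^k/\big(k(p-1)\big)=(k-1)!(p-1)^{k-1}$ members of $\cF$. I would then pass to $\cF_y=\{\,A\setminus\{y\}:y\in A\in\cF\,\}$, noting that deleting $y$ is injective on the sets that contain it, so $\cF_y$ still has at least $(k-1)!(p-1)^{k-1}$ members, now each of size at most $k-1$. The induction hypothesis yields a sunflower $\{B_1,\dots,B_p\}\subseteq\cF_y$ with core $C$, and re-inserting $y$ gives the sunflower $\{B_1\cup\{y\},\dots,B_p\cup\{y\}\}\subseteq\cF$ with $p$ members and core $C\cup\{y\}$, which closes the induction.

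The main obstacle here is essentially bookkeeping rather than conceptual: one has to verify that the element-deletion map loses no members of $\cF_y$ to coincidences, that the counting bound degrades exactly from $k!(p-1)^k$ to $(k-1)!(p-1)^{k-1}$, and that the base case (together with the trivial cases $p=1$ or $\cF=\emptyset$) is stated so as to match the displayed inequality. None of these steps requires a new idea.
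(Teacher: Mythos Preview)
The paper does not supply its own proof of this lemma; it is quoted as a classical result with a citation to Erd\H{o}s and Rado \cite{ER}. Your argument is exactly the standard inductive proof from that source and is correct in outline.

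One small point worth making explicit rather than leaving as ``mildly delicate'': as stated in the paper with the non-strict inequality $|\cF|\ge k!(p-1)^k$, the base case $k=1$ actually fails (take $p=2$ and $\cF$ consisting of a single singleton: then $|\cF|=1\ge 1!(2-1)^1$ but there is no sunflower with two members). The lemma is normally stated with strict inequality $|\cF|>k!(p-1)^k$, and with that in place the pigeonhole step yields strictly more than $(k-1)!(p-1)^{k-1}$ sets containing some fixed $y$, so the induction closes cleanly. This is a slip in the paper's statement, not in your reasoning, but since you flagged the base case as needing care you should say so outright.
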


Partly following the approach in \cite{KMV}, we consider 
non-uniform hypergraphs, but will disallow singletons as edges. 
Recall that a {\it linear cycle} of length $\ell$ is a list of sets $A_1,\ldots, A_\ell$ such that
$|A_i\cap A_{i+1}|=1$ for $i=1,\ldots,\ell-1$, $|A_\ell\cap A_1|=1$ and $A_i\cap A_j=
\emptyset$ for all other pairs $i,j$, $i\neq j$.
A set $S$ in a hypergraph $G$ is an {\it independent set} in $G$ if no edge of $G$ is
contained in $S$. Let $\alpha(G)$ denote the maximum size of an independent set in $G$.
The next lemma is similar to the ones in \cite{KMV}, except that  we use the sunflower lemma. A hypergraph is {\it simple} if no edge contains another.

\begin{lemma} \label{contraction}
Let $m,r\geq 2$ be integers. 
Let $G$ be a hypergraph whose edges have sizes between $2$ and $r$.
Suppose $G$ does not contain a linear cycle of length $\ell$.
Then there exists a simple hypergraph $G'$ on $V(G)$ whose edges have sizes
between $2$ and $r$ such that $G'$ contains no linear cycle of length $\ell$,
$G'$ contains no $(a,r\ell)$-sunflower for any $a\geq 2$,
and $\alpha(G')\leq \alpha(G)$.
\end{lemma}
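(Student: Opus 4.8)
The plan is to build $G'$ from $G$ by a finite sequence of local "cleaning" operations, each strictly decreasing the number of edges (so the process terminates), while preserving every property we care about. The two operations are: \textbf{(a) sunflower contraction:} if the current hypergraph contains an $(a,r\ell)$-sunflower with $a\ge 2$, members $A_1,\dots,A_{r\ell}$ and core $C$, delete all the $A_i$ and add the single edge $C$ (legitimate since $2\le a=|C|\le r$); \textbf{(b) super-edge deletion:} if some edge $e$ properly contains another edge $e'$, delete $e$. First I would check termination: operation (a) removes $r\ell$ distinct edges and adds at most one, so it drops the edge count by at least $r\ell-1\ge 1$ (even in the degenerate case that one member equals $C$), and (b) drops it by $1$; since the edge count is a nonnegative integer, after finitely many steps neither operation applies, and the resulting hypergraph $G'$ is then simple and contains no $(a,r\ell)$-sunflower with $a\ge 2$. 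Throughout, the vertex set stays $V(G)$ and every edge keeps size between $2$ and $r$ (the only new edges are cores $C$).

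Next I would verify that each operation preserves "no linear cycle of length $\ell$" and does not increase the independence number; both facts then pass to $G'$ by induction on the number of steps. For $\alpha$: any independent set $S$ of the new hypergraph is independent in the old one, because if $S$ contained a deleted edge $A_i$ (resp.\ $e$) it would contain $C\subseteq A_i$ (resp.\ $e'\subsetneq e$), which is an edge of the new hypergraph. For operation (b), preservation of "no linear $\ell$-cycle" is immediate since we only delete edges. The substantive point is operation (a): suppose the new hypergraph has a linear $\ell$-cycle $e_1,\dots,e_\ell$. If no $e_i$ equals $C$, this cycle already lies in the old hypergraph. Otherwise say $e_1=C$, and choose a member $A_j\ne C$ whose petal $A_j\setminus C$ is disjoint from $e_2\cup\cdots\cup e_\ell$: the petals are pairwise disjoint and $|e_2\cup\cdots\cup e_\ell|\le r(\ell-1)$, so at most $r(\ell-1)$ petals meet this set and at most one member equals $C$, leaving at least $r\ell-r(\ell-1)-1=r-1\ge 1$ admissible choices. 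For such $A_j$ we have $A_j\cap e_i=C\cap e_i$ for every $i\ge 2$, so $A_j,e_2,\dots,e_\ell$ have exactly the intersection pattern of $C=e_1,e_2,\dots,e_\ell$ and thus form a linear $\ell$-cycle; moreover $A_j$ is a genuine edge of the old hypergraph and is distinct from each $e_i$ (it is absent from the new hypergraph and is not $C$), giving a linear $\ell$-cycle in the old hypergraph, a contradiction.

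Combining these, the terminal $G'$ is simple, has all edges of size between $2$ and $r$, contains no linear cycle of length $\ell$, contains no $(a,r\ell)$-sunflower for any $a\ge 2$, and satisfies $\alpha(G')\le\alpha(G)$, which is exactly the statement. The hard part will be the linear-cycle preservation under operation (a): one must run the petal-counting argument carefully and keep track of the degenerate situation where a member of the sunflower coincides with its core, so that the replacement edge $A_j$ is genuinely a new edge that still completes a linear cycle of the correct length.
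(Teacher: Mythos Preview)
Your proof is correct and follows essentially the same approach as the paper: iteratively contract sunflowers to their cores, verify that a linear $\ell$-cycle in the new hypergraph could be lifted back to the old one by swapping the core for a member with disjoint petal, and observe that independence number cannot increase. The only cosmetic differences are that the paper replaces a \emph{single} sunflower member by the core at each step (using total edge-size rather than edge count as the termination measure) and performs the super-edge deletion once at the very end, whereas you delete all members simultaneously and interleave operation (b); your petal count $r(\ell-1)$ and explicit handling of the degenerate case $A_j=C$ are in fact slightly cleaner than the paper's.
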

\begin{proof}
We iterate the following process. 
Let $\cF$ be an $(a,r\ell)$-sunflower in $G$ with core $C$, where $|C|=a\geq 2$.
Let $G_1$ be obtained from $G$ by replacing some edge $e$ in $\cF$ with $C$.
If $G_1$ contains a linear cycle $L$ of length $\ell$, then $L$ must use $C$ as an edge. Since $L$ contains at most $r\ell$
vertices and $C$ is the core of a sunflower $\cF$ with $r\ell$ members, we can find
some edge $e'$ in $\cF$ such that $e'\setminus C$ is disjoint from $V(L)$. Now if we
replace $C$ with $e'$ in $L$, we obtain a linear cycle of length $\ell$ in $G$, a contradiction.
So, $G_1$ has no linear cycle of length $\ell$.
Clearly, any independent set $S$ in $G$ is also an independent set in $G_1$. So $\alpha(G_1)\leq \alpha(G)$. We now replace $G$ with $G_1$ and repeat this process until there is no longer  an $(a,r\ell)$-sunflower for some $a\geq 2$. 
The process must end since the total edge-size decreases at each step. Denote the final graph by $G'$. If $G'$ is not simple then we make it simple by removing edges that contain other edges. This
cannot create a linear cycle of length $\ell$, or a new sunflower, or increase the independence number. Then $G'$ satisfies the claim.
\end{proof}

A hypergraph $G$ is {\it $(2,q)$-linear} if no pair of vertices is contained in $q$ or more
edges of $G$.

\begin{lemma} \label{2q-linear}
Let $a,p,r\geq 2$ be integers.
Let $G$ be a simple hypergraph whose edges have sizes between $2$ and $r$
and contains no $(a,p)$-sunflower for any $a\geq 2$. Then $G$ is $(2,q)$-linear, where $q=r!(p-1)^r$.
\end{lemma}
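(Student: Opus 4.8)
The plan is to fix a pair of vertices $\{u,v\}$ and bound the number of edges of $G$ containing it, then invoke the sunflower lemma on that family. First I would let $E_{uv}$ denote the set of edges of $G$ that contain both $u$ and $v$, and consider the trace family $\cF = \{e\setminus\{u,v\}: e\in E_{uv}\}$. Since $G$ is simple, no edge of $E_{uv}$ contains another, so $e\mapsto e\setminus\{u,v\}$ is injective and the sets in $\cF$ are pairwise incomparable; in particular they are distinct, and each has size at most $r-2\le r$. Thus $|\cF|=|E_{uv}|$, and each member of $\cF$ is a set of size at most $r$ (in fact at most $r-2$, but $r$ suffices for the sunflower bound).

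The key step is to argue by contradiction: suppose $|E_{uv}|\ge q = r!(p-1)^r$. Then $|\cF|\ge r!(p-1)^r$, so by Lemma \ref{sunflower} (applied with $k=r$), $\cF$ contains a sunflower with $p$ members, say $A_1,\ldots,A_p$ with common core $C\subseteq S$. Adding back $\{u,v\}$, the corresponding edges $A_i\cup\{u,v\}$ of $G$ form a sunflower with core $C\cup\{u,v\}$, which has size $|C|+2\ge 2$. This is an $(a,p)$-sunflower in $G$ with $a=|C|+2\ge 2$, contradicting the hypothesis that $G$ contains no $(a,p)$-sunflower for any $a\ge 2$. Hence $|E_{uv}|<q$ for every pair $\{u,v\}$, which is exactly the statement that $G$ is $(2,q)$-linear.

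One small point to be careful about: I need the members $A_1,\ldots,A_p$ of the sunflower extracted from $\cF$ to remain distinct after adding back $\{u,v\}$ and to form a genuine sunflower of \emph{edges} of $G$ — but this is immediate since $u,v\notin A_i$ for all $i$ (as $A_i = e_i\setminus\{u,v\}$), so $A_i\cup\{u,v\}$ are distinct edges and $(A_i\cup\{u,v\})\cap(A_j\cup\{u,v\}) = (A_i\cap A_j)\cup\{u,v\} = C\cup\{u,v\}$ for $i\ne j$. I do not expect any real obstacle here; the only thing to get right is the bookkeeping on set sizes so that the sunflower lemma applies with $k=r$ and that the resulting core has size at least $2$ so it qualifies as a forbidden $(a,p)$-sunflower. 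The simplicity of $G$ is used exactly once, to guarantee $|\cF| = |E_{uv}|$ (injectivity of the trace map).
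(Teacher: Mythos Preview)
Your proposal is correct and follows essentially the same approach as the paper: take the trace $\{e\setminus\{u,v\}:e\in E_{uv}\}$, apply the Sunflower Lemma with $k=r$, and add $\{u,v\}$ back to the resulting sunflower to obtain a forbidden $(a,p)$-sunflower with $a\geq 2$. Your write-up is, if anything, more careful than the paper's about verifying injectivity of the trace map and that the sunflower structure survives adding $\{u,v\}$ back.
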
 
\begin{proof}
Otherwise some pair $\{a,b\}$ 
is contained in a set $H$ of least $q$ edges of $G$. Let $H'=\{e\setminus\{a,b\}: e\in H\}$.
Since $H\subseteq G$ is simple $|H'|=|H|\geq q=r!(p-1)^r$. By Lemma \ref{sunflower}, 
$H'$ contains a sunflower $\cF$ with
$p$ members. Now, adding $\{a,b\}$ to each member of $\cF$ yields an $(a,p)$-sunflower
in $G$, where $a\geq 2$,
contradicting our assumption about $G$. 
\end{proof}

\begin{lemma} \label{linear-subgraph} 
Let $r\geq 2,q\geq 1$ be integers. Let $G$ be a hypergraph whose edges have sizes
between $2$ and $r$.
Suppose $G$ is $(2,q)$-linear. Then $G$ contains a linear subgraph $G'$ with
$|G'|\geq \frac{2}{qr^2} |G|$.
\end{lemma}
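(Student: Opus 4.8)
The plan is to pass to an auxiliary \emph{conflict graph} and extract a large independent set there. Let $\mathcal{H}$ be the ordinary graph with vertex set $E(G)$ in which two edges $e,f$ of $G$ are adjacent exactly when $|e\cap f|\geq 2$. A set of edges of $G$ is an independent set of $\mathcal{H}$ if and only if it is the edge set of a linear subhypergraph of $G$; so it suffices to produce an independent set of $\mathcal{H}$ of size at least $\frac{2}{qr^2}|G|$.

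The crucial estimate is a bound on the maximum degree of $\mathcal{H}$. We may assume $q\geq 2$, since if $q=1$ then no pair of vertices lies in an edge, so $G$ has no edge and the statement is trivial. Fix $e\in E(G)$. Every neighbour $f\neq e$ of $e$ in $\mathcal{H}$ contains at least one of the $\binom{|e|}{2}\leq \binom{r}{2}$ pairs of vertices of $e$. For a fixed pair $P\subseteq e$, the $(2,q)$-linearity of $G$ guarantees that at most $q-1$ edges of $G$ contain $P$, one of which is $e$ itself; hence at most $q-2$ edges $f\neq e$ contain $P$. Summing over the pairs of $e$ gives
$$\Delta(\mathcal{H})\ \leq\ \binom{r}{2}(q-2)\ \leq\ \binom{r}{2}(q-1)-1,$$
where the last step uses $\binom{r}{2}\geq 1$ (as $r\geq 2$).

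Next I would invoke the standard greedy bound: a graph on $N$ vertices with maximum degree at most $D$ has an independent set of size at least $N/(D+1)$ (iteratively pick a vertex and delete it together with its at most $D$ neighbours). Applying this to $\mathcal{H}$ with $N=|G|$ and $D+1\leq \binom{r}{2}(q-1)$ yields a linear subgraph $G'\subseteq G$ with
$$|G'|\ \geq\ \frac{|G|}{\binom{r}{2}(q-1)}\ =\ \frac{2|G|}{r(r-1)(q-1)}.$$
Finally, $r(r-1)(q-1)\leq r^2q$ for all $r\geq 2$ and $q\geq 2$ (equivalently $r+q\geq 1$), so $|G'|\geq \frac{2}{qr^2}|G|$, which is the claim.

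There is no genuine obstacle here; the argument is short. The only thing to watch is making the constant come out right, i.e.\ checking that the crude maximum-degree bound $\binom{r}{2}(q-1)$ really does beat $qr^2/2$ (it does, with room to spare), together with disposing of the trivial corner case $q=1$ at the outset.
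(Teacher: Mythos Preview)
Your proof is correct and follows essentially the same approach as the paper: build the conflict graph on $E(G)$, bound its maximum degree by roughly $\binom{r}{2}q$ using $(2,q)$-linearity, and extract a large independent set via the greedy bound $N/(\Delta+1)$. You are slightly more careful than the paper in separating out the case $q=1$ and in obtaining the degree bound $\binom{r}{2}(q-2)$ rather than the paper's looser $\binom{r}{2}(q-1)$, but these are cosmetic differences.
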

\begin{proof}
By our assumption, each edge $e$ of $G$ shares 
a pair of vertices with at most $\binom{r}{2}(q-1)$ other edges.
Let $H$ be a graph whose vertices are the edges of $G$ such that
two vertices $u,v$ are adjacent in $H$ if the corresponding edges in $G$
share a pair of vertices. Then $\Delta(H)<\binom{r}{2}q-1$.
Hence $H$ contains an independent set $S$ of size at least $\frac{|V(H)|}{\Delta(H)+1}
\geq \frac{2|V(H)|}{qr^2}$. Let $G'$ be the subgraph of $G$ whose edges correspond to $S$.
Then $G'$ is a linear subgraph of $G$ with $|G'|\geq \frac{2}{qr^2}|G|$.
\end{proof}

\begin{lemma} \label{local-sparse}
Let $H$ be a linear hypergraph whose edges have sizes between $2$ and $r$.
Suppose $H$ does not contain a linear cycle of length $\ell$.  Let $D=\partial_2(H)$.
Let $v$ be any vertex in $V(D)=V(H)$. Then $|D[N_H(v)]|\leq r^{r+4}\ell |N_H(v)|$.
\end{lemma}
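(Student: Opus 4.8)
The plan is to sort the pairs of $D[N_H(v)]$ according to the unique edge of $H$ that contains each of them (uniqueness holds because $H$ is linear). A pair $\{a,b\}\subseteq N_H(v)$ whose host edge $e$ contains $v$ satisfies $a,b\in e\setminus\{v\}$, so such pairs number at most $\sum_{e\ni v}\binom{|e|-1}{2}\le\binom{r-1}{2}d_H(v)\le r^2|N_H(v)|$ (using $d_H(v)\le|N_H(v)|$, since the edges through $v$ are pairwise disjoint off $v$). Let $\mathcal E'$ be the set of edges $f$ of $H$ with $v\notin f$ and $|f\cap N_H(v)|\ge 2$; every remaining pair of $D[N_H(v)]$ has a host in $\mathcal E'$, and a fixed $f\in\mathcal E'$ contributes $\binom{|f\cap N_H(v)|}{2}\le\binom r2$ pairs, so these number at most $\binom r2|\mathcal E'|$. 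Hence $|D[N_H(v)]|\le r^2|N_H(v)|+\binom r2|\mathcal E'|$, and it will suffice to prove $|\mathcal E'|\le\tfrac32 r^3\ell\,|N_H(v)|$ (say), which yields the stated bound with enormous room to spare since $r\ge 3$ and $\ell\ge 3$.

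To bound $|\mathcal E'|$, write $n=|N_H(v)|$ and suppose for contradiction that $|\mathcal E'|\ge\tfrac32 r^3\ell\,n$; I would produce a linear cycle of length $\ell$ in $H$. Since $H$ is linear, $f\mapsto f\cap N_H(v)$ is injective on $\mathcal E'$, so the trace hypergraph $F=\{f\cap N_H(v):f\in\mathcal E'\}$ on vertex set $N_H(v)$ is linear, has all edges of size between $2$ and $r$, and satisfies $|F|=|\mathcal E'|$; thus $F$ has average degree at least $2|\mathcal E'|/n\ge 3r^3\ell$, and by Lemma~\ref{min-degree} it contains a subhypergraph $F^{*}$ with $\delta(F^{*})\ge 3r^2\ell$. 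For $a\in N_H(v)$ let $e_a$ be the unique edge of $H$ through $v$ and $a$, and call two vertices of $N_H(v)$ \emph{siblings} if they lie in a common $e_a$. Starting from an arbitrary vertex $a_0$, I would greedily grow a linear path $a_0,a_1,\dots,a_{\ell-2}$ in $F^{*}$, letting $f_i\in\mathcal E'$ be the host of its $i$-th edge, and maintaining the invariants that (i) $f_i\cap f_j=\emptyset$ for $|i-j|\ge 2$ and $f_i\cap f_{i+1}=\{a_i\}$, and (ii) all vertices of $\big(\bigcup_{i\le k}f_i\big)\cap N_H(v)$ are pairwise non-siblings. When extending from $a_{k-1}$, the only bad choices are edges of $F^{*}$ at $a_{k-1}$ whose host meets some earlier $f_i$ outside $\{a_{k-1}\}$ or contains a sibling of an already used vertex of $N_H(v)$; since any two edges of $F$ (resp.\ any two edges of $H$) through $a_{k-1}$ meet only in $a_{k-1}$, each of the $O(r^2\ell)$ forbidden vertices spoils at most one candidate, and $\delta(F^{*})\ge 3r^2\ell$ leaves a valid edge at every step.

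By invariant (i), $P=\{f_1,\dots,f_{\ell-2}\}$ is the edge set of a linear path in $H$ from $a_0$ to $a_{\ell-2}$, and $v\notin V(P)$ since no $f_i$ contains $v$. Invariant (ii) gives $e_{a_0}\ne e_{a_{\ell-2}}$, hence $e_{a_0}\cap e_{a_{\ell-2}}=\{v\}$; moreover $e_{a_0}\setminus\{v\}$ consists of $a_0$ together with siblings of $a_0$, while $V(P)\cap N_H(v)$ contains $a_0$ but no other sibling of $a_0$, and $V(P)\setminus N_H(v)$ is disjoint from $e_{a_0}\subseteq\{v\}\cup N_H(v)$; therefore $e_{a_0}\cap V(P)=\{a_0\}$ and symmetrically $e_{a_{\ell-2}}\cap V(P)=\{a_{\ell-2}\}$. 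Consequently $e_{a_0},f_1,\dots,f_{\ell-2},e_{a_{\ell-2}}$ is a linear cycle of length $\ell$ in $H$, contradicting the hypothesis and completing the proof. The heart of the matter — and the only place that needs care — is ruling out \emph{accidental} intersections, i.e.\ the hosts $f_i$ or the two closing edges through $v$ sharing stray vertices; invariants (i) and (ii) are designed precisely to suppress them, and the sole computation is checking that the forbidden set at each greedy step stays comfortably below $\delta(F^{*})$.
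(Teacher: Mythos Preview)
Your argument is correct and follows the same high-level plan as the paper: show that if too many edges of $H$ meet $N_H(v)$ in at least two points, then one can greedily build a linear path of length $\ell-2$ among them and close it with two edges through $v$ to obtain a linear $\ell$-cycle. The execution differs. The paper first passes to a random transversal $S\subseteq N_H(v)$, picking one vertex from each edge of the link $\cL_H(v)$, and then works entirely in $H[S]$; since $S$ meets every sibling class exactly once, the two closing edges through $v$ are automatically distinct and meet the path only at its endpoints, so the path itself can be \emph{any} linear path in a linear hypergraph of high minimum degree. The price is the factor $(r-1)^r$ from the sampling, which is what makes the paper's constant $r^{r+4}$ exponential in $r$. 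You instead keep all of $N_H(v)$, pass to the trace hypergraph $F=\{f\cap N_H(v):f\in\mathcal E'\}$, and enforce both the host-disjointness (invariant (i)) and the non-sibling condition (invariant (ii)) explicitly during the greedy growth; this is more bookkeeping but avoids the probabilistic step entirely and yields a constant polynomial in $r$ and $\ell$. Your treatment also explicitly covers the pairs of $D[N_H(v)]$ whose host edge is not contained in $N_H(v)$, via the injective trace map $f\mapsto f\cap N_H(v)$; the paper's write-up bounds only $|H[U]|$ and then asserts $|D[U]|\le\binom r2|H[U]|$, which as stated misses these pairs (and the pairs coming from edges through $v$), so your version is in fact slightly more complete on this point.
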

\begin{proof}
Since $H$ is linear, the link graph $\cL_H(x)$ consists of disjoint edges each of size at most
$r-1$. Let $U=V(\cL_H(v))=N_H(v)$. The edges of $\cL_H(x)$ form a partition of $U$ into
parts of size at most $r-1$ (with each part being an edge of $\cL_H(x)$).
Also since $H$ is linear no edge of $H[U]$ contains more than one vertex from any of those parts. Let us randomly and independently pick one vertex from each part, and call the resulting set $S$. For each edge in $H[U]$ the probability of it being in $H[S]$ is
at least $(\frac{1}{r-1})^r$. So there is a choice of $S$ for which
$H[S]\geq \frac{1}{(r-1)^r} |H[U]|$. If $H[S]$ has average degree at least $r^2\ell$, then
it contains a subgraph $H'$ with minium degree at least $r\ell$ and since $H'$
is linear, one can easily find a linear path $P$ of length $\ell-2$ say with endpoint $a$ and $b$.
Let $e_a$ be the edge of $H$ that contains $\{x,a\}$ and $e_b$ the edge of $H$ that contains $\{x,b\}$. 
Then $e_a\cap S=\{a\}, e_b\cap S=\{b\}$. In particular, we see that
$P\cup \{e_a, e_b\}$ is a linear cycle of length $\ell$, a contradiction. So $H[S]$ has
average degree less than $r^2\ell$. So, $|H[U]|\leq (r-1)^r |H[S]|<r^r\frac{r^2}{2}\ell|S|<
r^{r+2}\ell|U|$. So  $|D[U]|\leq \binom{r}{2}|H[U]|<r^{r+4}\ell |U|$.
\end{proof}

We need the following lemma due to Alon \cite{Alon-sparse}. 
The version stated below is implicit in the proof of Proposition 2.1 in \cite{Alon-sparse}. Alternatively, one could also apply  \cite{AKS-sparse}. Logarithms below are in base $2$.

\begin{lemma} {\bf \cite{Alon-sparse}} \label{alon-sparse}
Let $G$ be an $n$-vertex 
graph with maximum degree at most $d\geq 1$, in which for any vertex $v$,
$G[N(v)]$ contains an independent set of size at least $\frac{|N(v)|}{p}$. Then
$\alpha(G)\geq \frac{n\log d}{160 d \log (p+1)}$.
\end{lemma}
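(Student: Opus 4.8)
The plan is to run the random greedy independent-set process of Ajtai--Koml\'os--Szemer\'edi, in the locally sparse refinement of Alon, and to track the joint evolution of the number of surviving vertices and the number of surviving edges. Concretely: repeatedly pick a uniformly random vertex $v$ among those not yet deleted, add $v$ to the independent set $I$, and then delete $v$ together with all its surviving neighbours; stop when no vertex remains, so that $|I|$ equals the number of rounds. Since $\alpha(G)\ge\mathbb E|I|$, it suffices to bound the expected number of rounds from below, and we may assume $d\ge2$ (for $d\le1$, $\log d\le0$ and the statement is vacuous) and $p\ge1$ (which is forced by the hypothesis).

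Write $X_i$ for the number of vertices and $Y_i$ for the number of edges surviving after $i$ rounds, so $X_0=n$ and $Y_0\le nd/2$. If a uniformly random surviving vertex $v$ is chosen in round $i+1$, its expected surviving degree is $2Y_i/X_i$, so in expectation roughly $1+2Y_i/X_i$ vertices are removed that round. The hypothesis enters through the count of edges destroyed. Deleting $v$ removes the edges at $v$; more importantly, $G[N(v)]$ contains an independent set $S$ of size at least $|N(v)|/p$, whose vertices also get deleted, and every edge leaving $S$ is likewise destroyed. In the fully triangle-free case ($p=1$) no edge is ``wasted'' on the interior of $N(v)$, which is the source of the logarithmic gain over Tur\'an's bound; for general $p$ the interior of $N(v)$ can absorb some deletions, but at most a $1-\Omega(1/p)$ fraction of them. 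Averaging over $v$ converts this into a recursion that pairs $\mathbb E[X_i-X_{i+1}\mid X_i,Y_i]\approx 1+2Y_i/X_i$ with a lower bound on $\mathbb E[Y_i-Y_{i+1}\mid X_i,Y_i]$ that is larger by a factor $\Omega(1/p)$ than the naive one; solving the recursion exactly as in the proof of $\alpha\ge c\,n\log d/d$ for triangle-free graphs shows that the average surviving degree stays bounded for $\Omega\!\left(\frac{n\log d}{d\log(p+1)}\right)$ rounds, the factor $\log(p+1)$ in the denominator being the rate at which the $1/p$ inefficiency erodes the gain.

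The main obstacle is making the second paragraph quantitative with an absolute constant: one must control the conditioning and the random stopping time --- for instance by exhibiting a potential $\Phi(X_i,Y_i)$ that is a supermartingale until the process ends, or by applying Jensen's inequality to the convex function that governs the per-round progress --- and then carry the bookkeeping carefully enough that the constant comes out as $160$ and the $p$-dependence as $\log(p+1)$. A cleaner but essentially equivalent route, which I would fall back on, is to quote Shearer's theorem ($\alpha(H)\ge c\,|H|\log\Delta/\Delta$ for triangle-free $H$ of maximum degree $\Delta$) as a black box and reduce to it: use the $\frac1p$-local independence to extract, greedily or at random, an induced subgraph on $\Omega(n/p^{O(1)})$ vertices of maximum degree $O(d)$ that is (almost) triangle-free, and apply Shearer there; the difficulty then migrates to producing such a subgraph of the required size, which again comes down to the fact that every neighbourhood is $\frac1p$-independent.
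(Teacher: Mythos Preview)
The paper does not prove this lemma at all: it is quoted from Alon's paper \cite{Alon-sparse} (the authors note that the stated version is implicit in the proof of Proposition~2.1 there), and is used as a black box in the Ramsey argument. So there is no ``paper's own proof'' to compare against.

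As for your sketch itself: you have correctly identified the mechanism behind Alon's argument --- run the AKS random-greedy deletion process and exploit the large independent set inside each neighbourhood to show that enough edges are destroyed per round to keep the average degree under control for $\Omega(n\log d/(d\log(p+1)))$ rounds. That is indeed what Alon does. But what you have written is a plan, not a proof: you explicitly flag that ``the main obstacle is making the second paragraph quantitative with an absolute constant,'' and your fallback route via Shearer likewise ends with ``the difficulty then migrates to producing such a subgraph of the required size.'' Neither branch is carried out. In particular, the key step you gloss over --- converting the per-round heuristics into a rigorous lower bound on the expected number of rounds, via a supermartingale or a Jensen argument --- is exactly where the work lies in Alon's proof, and you have not supplied it. If you intend to include a self-contained proof, you need to actually execute one of the two routes; otherwise, do as the paper does and simply cite \cite{Alon-sparse}.
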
 

\begin{theorem}
Let $m,r$ be integers where $m\geq 2$ and $r\geq 3$.
There exists a constant $a_{m,r}$, depending on $m$ and $r$
such that $R(C^r_{2m}, K^r_t)\leq a_{m,r}(\frac{t}{\ln t})^\frac{m}{m-1}$.
\end{theorem}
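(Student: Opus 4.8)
The plan is to translate the Ramsey statement into a lower bound on the independence number of a sparsified version of the red hypergraph, and then invoke the local-density form of the Ajtai--Koml\'os--Szemer\'edi bound, Lemma~\ref{alon-sparse}. Fix $t$, put $n=\lceil a_{m,r}(t/\ln t)^{m/(m-1)}\rceil$ for a large constant $a_{m,r}$ to be named at the end, and in an arbitrary $2$-colouring of $E(K^r_n)$ let $R$ be the red $r$-graph. A blue $K^r_t$ is exactly an independent set of size $t$ in $R$, so it is enough to show that if $R$ contains no $C^r_{2m}$ then $\alpha(R)\ge t$; I will assume $\alpha(R)<t$ and deduce an upper bound on $n$ that contradicts its definition. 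Since $C^r_{2m}$ is a linear cycle of length $2m$, Lemma~\ref{contraction} (with $\ell=2m$) yields a simple hypergraph $G$ on $V(R)$ with edge sizes in $[2,r]$, no linear cycle of length $2m$, no $(a,2mr)$-sunflower for any $a\ge 2$, and $\alpha(G)\le\alpha(R)<t$; by Lemma~\ref{2q-linear}, $G$ is then $(2,q)$-linear with $q=r!(2mr-1)^r$, a constant depending only on $m$ and $r$.

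I would next bound $|E(G)|$. By Lemma~\ref{linear-subgraph}, $G$ has a linear subgraph $G_0$ with $|E(G_0)|\ge\frac{2}{qr^2}|E(G)|$; splitting $G_0$ according to edge size and applying Theorem~\ref{even-cycles} to each uniform part of size $\ge 3$ (and the Bondy--Simonovits bound \cite{BS} to the size-$2$ part) shows $|E(G_0)|\le c_{m,r}n^{1+1/m}$, so $|E(G)|\le\frac{qr^2}{2}c_{m,r}n^{1+1/m}$. Now let $D_0=\partial_2(G)$, a simple graph on $V(G)$ with $|E(D_0)|\le\binom{r}{2}|E(G)|\le C_1n^{1+1/m}$, $C_1=C_1(m,r)$; every independent set of $D_0$ is independent in $G$, so $\alpha(D_0)\le\alpha(G)<t$. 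Since $D_0$ has average degree at most $2C_1n^{1/m}$, at most half of $V(G)$ has $D_0$-degree above $\Delta:=4C_1n^{1/m}$; let $W$ be the set of the remaining vertices, so $|W|\ge n/2$, and set $D=\partial_2(G[W])$. For $v\in W$ we have $d_D(v)=|N_{G[W]}(v)|\le|N_G(v)|\le\Delta$, so $D$ is a graph on $W$ of maximum degree $O(n^{1/m})$ with $\alpha(D)\le\alpha(G[W])\le\alpha(G)<t$.

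The heart of the proof is local sparsity of $D$. For each $v\in W$ I want the subgraph that $D$ induces on $N_D(v)=N_{G[W]}(v)$ to contain an independent set of size at least $|N_{G[W]}(v)|/p$ for a constant $p=p(m,r)$. Since $G[W]$ is $(2,q)$-linear with no linear $2m$-cycle, the method of Lemma~\ref{local-sparse} --- sample one vertex from each block of the link of $v$, and observe that a long linear path in the sampled part closes up through $v$ into a linear $2m$-cycle --- gives $|D[N_{G[W]}(v)]|\le C_2|N_{G[W]}(v)|$ for a constant $C_2=C_2(m,r)$; hence by the Caro--Wei bound the neighbourhood graph has an independent set of size at least $|N_{G[W]}(v)|/(2C_2+1)$. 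Now Lemma~\ref{alon-sparse} applies to $D$ with $d=\Delta$ and $p=2C_2+1$ and, using $|W|\ge n/2$ and $\log\Delta\ge\frac1m\log n$, gives
$$t>\alpha(R)\ge\alpha(G)\ge\alpha(D)\ge\frac{|W|\log\Delta}{160\,\Delta\,\log(p+1)}\ge\frac{n^{1-1/m}\log n}{C_3\log(p+1)}$$
for a constant $C_3=C_3(m,r)$. As $n$ is polynomial in $t$, $\log n\ge\frac12\log t$ once $t$ is large, so $n^{(m-1)/m}\le C_4\,t/\log t$, i.e. $n\le C_5(t/\ln t)^{m/(m-1)}$; taking $a_{m,r}>C_5$ contradicts $n=\lceil a_{m,r}(t/\ln t)^{m/(m-1)}\rceil$ (for $t$ below a fixed threshold the asserted bound is trivial after enlarging $a_{m,r}$). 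The one step that needs real care is the local-sparsity estimate: Lemma~\ref{local-sparse} is stated only for linear hypergraphs, whereas $G[W]$ is merely $(2,q)$-linear, so one must re-run its sampling argument while controlling the multiplicity $q$ (or first pass to a linear subgraph inside the relevant neighbourhoods); everything else is bookkeeping with the constants $q,C_1,C_2,C_3$.
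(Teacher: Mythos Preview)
Your proposal follows essentially the same route as the paper's proof: apply Lemma~\ref{contraction} and Lemma~\ref{2q-linear} to pass to a $(2,q)$-linear hypergraph with no smaller independence number, use Lemma~\ref{linear-subgraph} together with Theorem~\ref{even-cycles} and the Bondy--Simonovits bound to get $O(n^{1+1/m})$ edges, restrict to vertices of low degree, and feed the resulting $2$-shadow into Lemma~\ref{alon-sparse} via a local-sparsity estimate and Caro--Wei. The only cosmetic difference is that the paper thresholds on hypergraph degree in $G'$ while you threshold on degree in $\partial_2(G)$; both give the required $\Delta(D)=O(n^{1/m})$ on a set of at least $n/2$ vertices.

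Your flagged concern is well placed: the paper invokes Lemma~\ref{local-sparse} on $H=G'[U]$, which is only $(2,q)$-linear, so the same adaptation you describe (re-running the link-sampling argument with the bounded multiplicity $q$, at the cost of constants depending on $m,r$) is implicitly needed there as well. With that adjustment the argument goes through, and your proof is correct.
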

 \begin{proof}
The definition of $a_{m,r}$ depends on various constants we defined earlier and will be implit in our proof. 
Let $n\geq a_{m,r} (\frac{t}{\ln t})^\frac{m}{m-1}$. 
By choosing $a_{m,r}$ to be large enough, we may assume that $n\geq n_1$, where $n_1$ is given in Theorem \ref{even-cycles}. It suffices to show that if $G$ is
an $n$-vertex $r$-graph that does not contain $C^r_{2m}$ then $G$ contains an independent 
set of size at least $t$. Let such $G$ be given.  By Lemma \ref{contraction}, there exists
a simple 
hypergraph $G'$ with $V(G')=V(G)$ such that $\alpha(G')\leq \alpha(G)$, $G'$ contains no
linear cycle of length $2m$, and that $G'$ contains no $(a,2mr)$-sunflower for 
any $a\geq 2$.
By Lemma \ref{2q-linear}, $G'$ is $(2,q)$-linear, where $q=r!(2mr-1)^r$.
By Lemma \ref{linear-subgraph}, $G'$ contains a linear subgraph with
$|G''|\geq c_1|G'|$, where $c_1$ is a positive constant depending on $m$ and $r$. Clearly, $G''$ contains no linear cycle of length $2m$. Applying the $O(n^{1+\frac{1}{m}})$ bound
\cite{BS}  on $ex(n,C_{2m})$
and Theorem \ref{even-cycles},  by considering edges of various sizes, we have $|G''|\leq c_2 n^{1+\frac{1}{m}}$, for some constants $c_2$, depending on $m$ and $r$. Hence $|G'|\leq c_3 n^{1+\frac{1}{m}}$ for some constant $c_3$, depending on $m$ and $r$. So $G'$ has average degree 
at most $rc_3n^{\frac{1}{m}}$. Clearly, at most $n/2$ vertices in $G'$ can have degree at least
$2rc_3n^{\frac{1}{m}}$. Let $H$ be the subgraph of $G'$ induced by vertices of degree at most
$2rc_3 n^\frac{1}{m}$. Then $|V(H)|\geq \frac{n}{2}$ and $\Delta(H)\leq 2rc_3n^\frac{1}{m}$.

Let $D=\partial_2(H)$. Then $\Delta(D) \leq 2r^2 c_3n^{\frac{1}{m}}$.
Note that for each vertex $v$ we have $N_D(v)=N_H(v)$, which we will denote by $N(v)$.
Since $H$ does not contain a linear cycle of length $2m$, by Lemma \ref{local-sparse},
for each vertex $v$ in $V(H)=V(D)$, we have $|D[N(v)]|\leq 2mr^{r+4}|N(v)|$. So $D[N(v)]$ has
average degree at most $4mr^{r+4}$. By Caro and Wei \cite{Caro, Wei},
$D[N(v)]$ contains an independent set of size at least $\frac{|N(v)|}{4mr^{r+4}+1}$.
By Lemma \ref{alon-sparse}, with $d= 2r^2 c_3n^{\frac{1}{m}}$, $\alpha(D)\geq 
c_5 \frac{|V(D)|\ln n}{n^\frac{1}{m}}\geq \frac{c_5}{2} 
n^{\frac{m-1}{m}}\ln n$, for some positive constant $c_5$,
depending on $m$ and $r$. Since $n\geq a_{m,r} (\frac{t}{\ln t})^\frac{m}{m-1}$,  by choosing $a_{m,r}$ to be large enough, we can ensure $\alpha(D)\geq t$. Certainly any indepdent set in $D$ is
also an independent set in $G'$. Hence $\alpha(G')\geq t$ and $\alpha(G)\geq \alpha(G')=t$.
\end{proof}

For odd cycle-complete Ramsey numbers, we need some more definitions and a lemma.
Let $H$ be a hyergraph whose vertices are ordered by a total order $\pi$. Let $P$
be a linear path of length $\ell$, that is, $P$ consists of a list of edges $e_1,\ldots, e_\ell$
such that $|e_i\cap e_{i+1}|=1$ for each $i\in [\ell-1]$ and $e_i\cap e_j=\emptyset$ whenever
$|i-j|>1$. For each $i\in [\ell-1]$, let $e_i\cap e_{i+1}=\{x_i\}$.
We say that $P$ is an {\it increasing linear path} under $\pi$  if for all $v\in e_1\setminus \{x_1\}, \pi(v)<\pi(x_1)$, $\forall v\in e_\ell\setminus x_{\ell-1}, \pi(x_{\ell-1})<\pi(v)$, and for each $i= 2,\ldots,\ell-1$ and $v\in e_i\setminus \{x_{i-1},x_i\}$, we have $\pi(x_{i-1})<\pi(v)<\pi(x_i)$.
If $P$ is an increasing linear path and $v$ is the
largest vertex on $P$ under $\pi$, then we say that $P$ {\it ends} at $v$.

\begin{lemma} \label{increasing-path}
Let $H$ be a hypergraph and $\pi$ a total order on $V(H)$.
If $H$ does not contain an increasing linear path of
length $\ell$, then $V(H)$ can be partitioned into $\ell$ independent sets.
\end{lemma}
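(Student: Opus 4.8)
The plan is to run the Gallai--Roy ``longest path'' argument, adapted to increasing linear paths in the ordered hypergraph $(H,\pi)$. For each vertex $v\in V(H)$ let $g(v)$ be the maximum number of edges in an increasing linear path (under $\pi$) that ends at $v$, with the convention that a single vertex is a trivial path of length $0$ ending at itself (so $g(v)\ge 0$), and note that a single edge $e$ is an increasing linear path of length $1$ ending at its $\pi$-largest vertex. Deleting the last edge of an increasing linear path again yields an increasing linear path (the former penultimate edge becomes a last edge, and its non-spine vertices already lie above the relevant spine vertex), so the set of lengths of increasing linear paths in $H$ is downward closed; hence the hypothesis that $H$ has no increasing linear path of length $\ell$ gives $0\le g(v)\le \ell-1$ for all $v$. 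Put $V_j=\{v\in V(H): g(v)=j\}$ for $j=0,1,\dots,\ell-1$. These sets partition $V(H)$, so it suffices to show that each $V_j$ is independent.

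Next I would suppose, for contradiction, that some edge $e$ satisfies $e\subseteq V_j$, and set $v'=\min e$ and $v=\max e$ under $\pi$. Since $v'\in V_j$, there is an increasing linear path $P=e_1,\dots,e_j$ ending at $v'$ (take $P$ empty if $j=0$). The key structural fact is that, because $P$ ends at $v'$, the vertex $v'$ is the unique $\pi$-maximum of $V(P)$; moreover, in an increasing linear path the maximum vertex lies in the last edge and is not a spine vertex (the spine vertices strictly increase and the last spine vertex $x_{j-1}$ is strictly below some vertex of the last edge, since every edge has size at least $2$). Thus $v'\in e_j$, $v'\notin e_i$ for $i<j$, and every vertex of $e_1,\dots,e_{j-1}$ is strictly below $v'$. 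Since every vertex of $e$ is at least $v'=\min e$, this yields $e\cap e_i=\emptyset$ for $i<j$ and $e\cap e_j=\{v'\}$, so $e_1,\dots,e_j,e$ is a linear path. Checking monotonicity: the first $j-1$ edges are untouched; the old last edge $e_j$ becomes a middle edge, which is valid since its non-spine vertices lie strictly between $x_{j-1}$ and $v'=\max e_j$; and the new last edge $e$ has all of its vertices above $v'=\min e$. Hence $e_1,\dots,e_j,e$ is an increasing linear path of length $j+1$ ending at $v=\max e$ (when $j=0$ this path is simply the single edge $e$, of length $1$). Therefore $g(v)\ge j+1$, contradicting $v\in e\subseteq V_j$. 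So each $V_j$ is independent, which completes the proof.

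I expect the only real work to be the bookkeeping in the previous paragraph: verifying both the linearity requirement (that $e$ meets $P$ in exactly the single vertex $v'$) and the three monotonicity conditions in the definition of an increasing linear path, together with treating the degenerate case $j=0$ uniformly. All of this reduces to the single observation that an increasing linear path ending at $v'$ has $v'$ as its strict $\pi$-maximum with every other vertex lying strictly below it, which is precisely what allows us to attach the edge $e$ (whose $\pi$-minimum is $v'$) at the end without creating a forbidden intersection or violating monotonicity.
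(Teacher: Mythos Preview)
Your proof is correct and follows exactly the same Gallai--Roy style argument as the paper: define the level sets $V_j$ by the length of the longest increasing linear path ending at each vertex, then show that an edge inside $V_j$ could be appended to a length-$j$ path ending at its $\pi$-minimum to produce a length-$(j+1)$ path ending at its $\pi$-maximum. The paper's proof is terser (it simply asserts that $P\cup e$ is an increasing linear path of the right length), whereas you carefully verify the linearity and monotonicity conditions; the substance is identical.
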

\begin{proof}
For each $i=0,\ldots \ell-1$, let $S_i$ denote the set of vertices $v$ such that 
the longest increasing linear path in $H$ that ends at $v$ has length $i$. Then $S_0,\ldots, S_{\ell-1}$
partition $V(H)$. Suppose for some $i\in \{0,\ldots, \ell-1\}$, $S_i$ contains an edge $e$.
Let $v$ and $v'$ be the vertices in $e$ that are smallest and largest under $\pi$, respectively. By definition, $H$ contains an increasing linear path $P$ of length $i$ that ends at $v$. Now $P\cup e$ is an increasing path of length $i+1$ that ends at $v'$,
contradicting $v'\in S_i$. Hence for each $i$, $S_i$ contains no edge of $H$ and hence is an independent set in $H$.
\end{proof}

The following lemma is a variant of Theorem 1 in \cite{EFRS}. The proof is similar. 
\begin{lemma}  \label{odd-cycle-levels}
Let $H$ be a  hypergraph whose edges have sizes between $2$ and $r$. Suppose
$H$ does not contain a linear cycle of length $2m+1$. Let $H^*$ be the subgraph of $H$ consisting of all the edges of size $2$ in $H$. 
Let $v\in V(G)$. For each $i$, let $S_i$ be the set of vertices in $H^*$ that
are at distance $i$ from $v$. Then for each $i\leq m$, $H[S_i]$ contains an independent
set of size at least $\frac{|S_i|}{2m-1}$.
\end{lemma}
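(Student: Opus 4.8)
The plan is to equip the subhypergraph $H[S_i]$ with a carefully chosen total order and apply Lemma~\ref{increasing-path}. First I would fix a breadth-first-search tree $T$ of $H^*$ rooted at $v$; since $T$ is a BFS tree, depth in $T$ equals distance from $v$ in $H^*$, so $S_i$ is exactly the set of vertices of $T$ at depth $i$. Let $\pi$ be any total order on $V(H)$ whose restriction to $S_i$ is a depth-first (pre-order) traversal order of $T$ — equivalently, for every vertex $x$ of $T$ the descendants of $x$ lying in $S_i$ occupy a contiguous $\pi$-block, and these blocks are nested according to $T$. The key claim is that $H[S_i]$ has no increasing linear path of length $2m-1$ under $\pi$. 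Granting this, Lemma~\ref{increasing-path} partitions $S_i$ into $2m-1$ sets each independent in $H[S_i]$ (hence independent in $H$, since any edge of $H$ inside such a set is an edge of $H[S_i]$), and one of them has size at least $|S_i|/(2m-1)$, which proves the lemma. For $i=0$ the claim is vacuous, since $H[\{v\}]$ has no edges; so assume $i\ge 1$.

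For the claim, suppose for contradiction that $Q=e_1,\dots,e_{2m-1}$ is an increasing linear path in $H[S_i]$ with $e_u\cap e_{u+1}=\{x_u\}$. Because $Q$ is increasing, its vertices listed in $\pi$-order are $z_0<z_1<\dots<z_{2m-1}$, where $z_u=x_u$ for $1\le u\le 2m-2$, $z_0$ is the $\pi$-smallest vertex of $e_1$ and $z_{2m-1}$ the $\pi$-largest vertex of $e_{2m-1}$; this is also the order in which the $z_u$ appear along $Q$, so for $0\le s<t\le 2m-1$ the sub-path $Q[z_s,z_t]$ has exactly $t-s$ edges and lies in $S_i$. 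All the $z_u$ are distinct vertices of $T$ at depth $i$. For $1\le u\le 2m-1$ let $j_u\in\{0,\dots,i-1\}$ be the depth in $T$ of the lowest common ancestor of $z_{u-1}$ and $z_u$. Using the standard fact that in a DFS pre-order the lowest common ancestor of two vertices at the same depth has depth equal to the minimum of the corresponding consecutive quantities, for $0\le s<t\le 2m-1$ the lowest common ancestor $w$ of $z_s$ and $z_t$ lies at depth $\mu(s,t):=\min\{j_{s+1},\dots,j_t\}$, the tree-paths from $w$ to $z_s$ and from $w$ to $z_t$ are internally disjoint and meet $S_i$ only in $z_s,z_t$, and together they form a path $R_{s,t}$ in $H^*$ from $z_s$ to $z_t$ of length $2\big(i-\mu(s,t)\big)$.

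Next I would check that $C_{s,t}:=Q[z_s,z_t]\cup R_{s,t}$ is a linear cycle in $H$ of length $(t-s)+2\big(i-\mu(s,t)\big)$: the two paths share only $z_s$ and $z_t$ (all vertices of $Q[z_s,z_t]$ have depth $i$, while all internal vertices of $R_{s,t}$ have depth $<i$), each is a linear path, and at $z_s$ and at $z_t$ the two incident edges meet in just that vertex since $Q[z_s,z_t]\subseteq S_i$ while the end-edges of $R_{s,t}$ each have a vertex at depth $i-1$; the remaining linearity conditions follow from a routine check (using that $Q$ is linear and that only the two end-edges of $R_{s,t}$ meet $S_i$). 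Finally comes the counting step: let $g_0=\min\{j_1,\dots,j_{2m-1}\}$, attained at an index $u_0$, and set $L_0=2m+1-2i+2g_0$. Since $0\le g_0\le i-1$ and $i\le m$, we have $1\le L_0\le 2m-1$, so there is a block of $L_0$ consecutive indices inside $\{1,\dots,2m-1\}$ containing $u_0$; writing it as $\{s+1,\dots,t\}$ gives $t-s=L_0$ and $\mu(s,t)=g_0$ (any block containing $u_0$ has minimum equal to the global minimum $g_0$). Then $C_{s,t}$ is a linear cycle in $H$ of length $L_0+2(i-g_0)=2m+1$, contradicting the hypothesis. This proves the claim, and hence the lemma.

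The two places requiring care — and the points at which the hypotheses $i\le m$ and ``increasing'' are used — are: (a) the choice of $\pi$ as a DFS order of the BFS tree, which is what makes lowest common ancestors of vertices of $S_i$ governed by minima of the consecutive quantities $j_u$ and, at the same time, makes the $\pi$-order of the distinguished vertices of an increasing path coincide with their order along the path; and (b) the final counting, where the freedom to trade sub-path length against tree-detour length must be tuned to hit $2m+1$ exactly — a block of the correct size $L_0$ around a global minimizer does this, and it fits inside $\{1,\dots,2m-1\}$ precisely because $i\le m$. The verification that $C_{s,t}$ is genuinely a linear cycle is the routine but necessary bookkeeping.
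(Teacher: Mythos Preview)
Your proof is correct and follows essentially the same approach as the paper. Both arguments grow a BFS tree $T$ in $H^*$, order $S_i$ so that descendants of each vertex of $T$ form a contiguous block (your DFS pre-order coincides with the paper's recursively defined level order), assume an increasing linear path of length $2m-1$ in $H[S_i]$, locate the global lowest common ancestor of the link vertices (your $g_0=\min_u j_u$ is precisely the depth of the paper's vertex $w$), choose a sub-path of the correct length around the splitting index, and close it into a linear $(2m+1)$-cycle via the tree path before invoking Lemma~\ref{increasing-path}. Your write-up is somewhat more explicit in verifying that the resulting cycle is genuinely linear, but the mathematical content is the same.
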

\begin{proof}
Grow a breadth-first search tree $T$ in $H^*$ from $v$. So the levels of $T$ are precisely the distance
classes from $v$ in $H^*$. For each $i\geq 1$, define a linear order $\pi_i$ of $S_i$ as follows.
Let $\pi_1$ be an arbitrary linear order on $S_1$. For each $i\geq 2$, let $\pi_i$ be a linear
order on $S_i$ obtained by listing the children of the first vertex in $\pi_{i-1}$, followed
by the children of the second vertex in $\pi_{i-1}$, and etc.
For each $1\leq i\leq m$, we claim that $H[S_i]$ contains no increasing linear path of length $2m-1$.
Otherwise, fix an $i$ for which $H[S_i]$ contains an increasing linear path $P$ of length $2m-1$ with 
edges $e_1,e_2,\ldots, e_{2m-1}$ in order.  Let $x_1$ be the least vertex in $e_1$ under $\pi_i$. Let $x_{2m}$ be the largest vertex in $e_{2m-1}$ under $\pi_i$. For each $k\in\{2,\ldots, 2m-1\}$, let $e_{k-1}\cap e_k=\{x_k\}$. Then $x_1<x_2<\ldots<x_{2m}$ in $\pi_i$.
Let $w$ be a closest common anchester of $x_1,\ldots, x_{2m}$ in $T$. Suppose $w\in S_j$, where $j<i$. Let $k$ be the smallest positive integer such that $x_k$ and $x_{k+1}$ are under
different children of $w$. Such $k$ exists by our choice of $w$. By our ordering on each level,
the anchesters of $x_1,\ldots, x_k$ in $S_j$ precede anchesters of $x_{k+1},\ldots, x_{2m}$ 
in $S_j$ under $\pi_j$. Hence for any $a\in [k], b\in [2m]\setminus [k]$, the unique $x_a,x_b$-path $Q_{a,b}$ in $T$ must pass through $w$ and has length $2(i-j)$.
Based on the value of $k$, we can find $a\in [k], b\in [2m]\setminus [k]$
such that $b-a=2m+1-2(i-j)$. Now $Q_{a,b}\cup\{e_a,e_{a+1},\cdots, e_{b-1}, e_b\}$ is a linear cycle of length $2m+1$ in $H$, a contradiction. 
Hence $H[S_i]$ contains no increasing linear path of length $2m-1$. By  Lemma \ref{increasing-path}, $H[S_i]$ contains an independent set of size at least
$\frac{|S_i|}{2m-1}$.
\end{proof}

\begin{theorem}
Let $m,r$ be positive integers where $m\geq 2, r\geq 3$. There exists a positive constant $b_{m,r}$,
depending on $r$ and $m$, such that $R(C^r_{2m+1}, K^r_t)\leq b_{m,r} t^{\frac{m}{m-1}}$.
\end{theorem}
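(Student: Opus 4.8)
The plan is to follow the template of the even case, with one essential change: edges of size $2$, which the even-case proof controls via the Bondy--Simonovits bound, must now be handled differently, since graph odd cycles have quadratic Tur\'an number and may be plentiful in our hypergraph. For them we use the breadth-first level structure of Lemma~\ref{odd-cycle-levels} (which also absorbs the role played by Alon's Lemma~\ref{alon-sparse} in the even case -- hence the loss of the logarithmic factor); edges of size at least $3$ are still controlled by a Tur\'an bound, now Theorem~\ref{odd-cycles}. Fix a large constant $b_{m,r}$, let $n\ge b_{m,r}t^{m/(m-1)}$, and let $G$ be an $n$-vertex $r$-graph with no $C^r_{2m+1}$. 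Every passage below is to an induced sub-hypergraph, a $2$-shadow, or the contracted hypergraph of Lemma~\ref{contraction}, none of which raises the independence number, so it suffices to exhibit an independent set of size at least $c_{m,r}n^{1-1/m}$ somewhere; this exceeds $t$ once $b_{m,r}$ is large.

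\emph{Reduction to a near-linear hypergraph.} Apply Lemma~\ref{contraction} with $\ell=2m+1$ to get a simple hypergraph $G'$ on $V(G)$ with $\alpha(G')\le\alpha(G)$, no linear cycle of length $2m+1$, and no $(a,(2m+1)r)$-sunflower for any $a\ge 2$; by Lemma~\ref{2q-linear}, $G'$ is $(2,q)$-linear with $q=r!((2m+1)r-1)^r$. Split $G'=G'_2\cup G'_{\ge 3}$ by edge size. The sub-hypergraph $G'_{\ge 3}$ is $(2,q)$-linear, so Lemma~\ref{linear-subgraph} extracts a linear subgraph retaining a constant (depending on $m,r$) fraction of its edges; for each $s\in\{3,\dots,r\}$ the $s$-uniform part of that subgraph is a linear $s$-graph with no $C^s_{2m+1}$ (such a copy would be a linear cycle of length $2m+1$ in $G'$), so by Theorem~\ref{odd-cycles} it has $O_{m,r}(n^{1+1/m})$ edges, provided $b_{m,r}$ is large enough that $n$ exceeds each relevant threshold $n_2$. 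Summing over $s$ and undoing Lemma~\ref{linear-subgraph}, $|G'_{\ge 3}|\le c_1 n^{1+1/m}$ for some $c_1=c_1(m,r)$.

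\emph{Levels, a dichotomy, and the finish.} For $v\in V(G')$ let $S_0(v),S_1(v),\dots$ be the breadth-first levels of $v$ in the graph $G'_2=(G')^{*}$. By Lemma~\ref{odd-cycle-levels}, for every $v$ and $i\le m$ the induced hypergraph $G'[S_i(v)]$ has an independent set of size at least $|S_i(v)|/(2m-1)$, which is independent in $G'$; hence $\alpha(G')\ge\frac{1}{2m-1}\max\{|S_i(v)|:v\in V(G'),\ i\le m\}$. If this maximum is at least $(2m-1)c_{m,r}n^{1-1/m}$ we are done, so assume every one of the first $m$ breadth-first levels, from every vertex, is smaller. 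Under this assumption one shows that $G'_2$ itself must be sparse, $|G'_2|\le c_2 n^{1+1/m}$ for some $c_2=c_2(m,r)$: the first level already bounds $\Delta(G'_2)$, and chaining the level estimate of Lemma~\ref{odd-cycle-levels} through all $m$ levels -- restarting the breadth-first search in the hypergraph left after deleting each processed ball, and repeatedly using that $G'$ has no linear cycle of length $2m+1$ -- prevents $G'_2$ from having too many edges. Then $|G'|\le(c_1+c_2)n^{1+1/m}$, so $G'$ has average degree $O_{m,r}(n^{1/m})$, and we conclude exactly as in the even case: delete the at most $n/2$ vertices of above-average degree to obtain an induced sub-hypergraph $H$ on at least $n/2$ vertices with $\Delta(H)=O_{m,r}(n^{1/m})$; then $\partial_2(H)$ is a graph of maximum degree $O_{m,r}(n^{1/m})$, so the greedy bound \cite{Caro,Wei} gives an independent set of size $\Omega_{m,r}(n^{1-1/m})$ in $\partial_2(H)$, which is independent in $H$, hence in $G'$. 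Either way $\alpha(G)\ge\alpha(G')\ge c_{m,r}n^{1-1/m}\ge t$.

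\emph{Where the work is.} Everything but the ``sparse case'' in the previous paragraph is bookkeeping parallel to the even case. That case is the crux: using only the first breadth-first level gives merely $\Delta(G'_2)=O_m(t)$ and thus the far weaker bound $R(C^r_{2m+1},K^r_t)=O_{m,r}(t^{2})$, so squeezing out the exponent $m/(m-1)$ forces a genuine multi-level analysis of how quickly the breadth-first search in $G'_2$ can grow in the absence of a linear cycle of length $2m+1$ -- the hypergraph counterpart of the argument behind Theorem~1 of \cite{EFRS}.
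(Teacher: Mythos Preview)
Your reduction to $G'$ and the bound $|G'_{\geq 3}|=O_{m,r}(n^{1+1/m})$ are fine and match the paper. The gap is in the ``sparse case'': the assertion that if all BFS levels $S_i(v)$ (for $i\le m$) have size below $Cn^{1-1/m}$ then $|G'_2|\le c_2 n^{1+1/m}$ is false. Take $G'_{\ge 3}=\emptyset$ and let $G'_2$ be a disjoint union of copies of $K_{d,d}$ with $d=\lfloor n^{1-1/m}/2\rfloor$. This graph is bipartite (hence has no odd cycle, so no linear $(2m+1)$-cycle), every BFS level from any vertex has size at most $d<Cn^{1-1/m}$, yet $|G'_2|=\Theta(nd)=\Theta(n^{2-1/m})$, which for $m\ge 3$ is much larger than $n^{1+1/m}$. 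So the step ``$|G'|=O(n^{1+1/m})$, pass to bounded-degree $H$, apply Caro--Wei'' cannot be executed as written. Your own final paragraph acknowledges this is the crux and leaves it as a vague ``multi-level analysis''; that is precisely the missing idea.

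The paper never bounds $|G'_2|$. Instead, after restricting to $U=\{v:\deg_{G'_{\ge 3}}(v)\le O(n^{1/m})\}$ and setting $H=G'[U]$, it builds the independent set directly by iterated BFS in $H^*=H_2$: from a vertex $v$, pigeonhole gives some $k\le m-1$ with $|S_{k+1}|/|S_k|\le n^{1/m}$ (else $|S_m|>n$). Lemma~\ref{odd-cycle-levels} yields an independent set $S'\subseteq S_k$ of size $\ge |S_k|/(2m-1)$; one then deletes only $S_{k-1}\cup S_k\cup S_{k+1}$ together with one extra vertex per $H'$-edge meeting $S'$. The point of choosing $k$ via the ratio is that $|S_{k-1}\cup S_k\cup S_{k+1}|\le (n^{1/m}+2)|S_k|$, and the $H'$-contribution is at most $\Delta(H')|S'|=O(n^{1/m})|S'|$, so each round removes $O(n^{1/m})|S'|$ vertices while contributing $|S'|$ to the independent set. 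Iterating gives $\alpha(G')=\Omega(n^{1-1/m})$. Your dichotomy, by contrast, only records that levels are small in absolute size, not that some consecutive ratio is small; the ratio argument is what makes the deletion affordable.
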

\begin{proof}
Our choice of $b_{m,r}$ will depend on other constants defined earlier and will be implicit in the proof. 
Let $n\geq b_{m,r}t^{\frac{m}{m-1}}$. 
By choosing $b_{m,r}$ to be large enough, we may assume that $n\geq n_2$, where $n_2$ is specified in Theorem \ref{odd-cycles}.
Let $G$ be any $n$-vertex $r$-graph on $n$ vertices
not containing a copy of $C^r_{2m+1}$. We show that $G$ 
contains an independent set of size at least $t$.

By Lemma \ref{contraction}, there exists
a simple hypergraph $G'$ on $V(G)$ whose edges have sizes between $2$ and $r$ 
such that $\alpha(G')\leq \alpha(G)$, $G'$ contains no linear cycle of length $2m+1$, and that $G'$ contains no $(a,(2m+1)r)$-sunflower for any $a\geq 2$. By Lemma \ref{2q-linear}, $G'$ is $(2,q)$-linear where $q=r![(2m+1)r-1]^r$.
For each $j=3,\ldots, r$, let $G_j$ denote the subgraph of $G'$ consisting of edges of
size $j$. Let $G''=\bigcup_{j=3}^r G_j$. Then $G''$ is $(2,q)$-linear. By Lemma \ref{linear-subgraph}, $G''$ contains a linear subgraph $G^*$ with $|G^*|\geq \frac{2}{qr^2}|G''|$.
By Theorem \ref{odd-cycles}, $|G^*|\leq c'_1 n^{1+\frac{1}{m}}$ for some positive constant
$c'_1$ depending on $m$ and $r$. Hence $|G''|\leq c'_2 n^{1+\frac{1}{m}}$ for
some positive constant $c'_2$ depending on $m$ and $r$.
The number of vertices of $G''$ of degree at least $2rc'_2 n^{\frac{1}{m}}$ is at most $n/2$. Let $U$ be the set of vertices of degree at most $2rc'_2n^\frac{1}{m}$
in $G''$. Then $|U|\geq \frac{n}{2}$.
Let $H=G'[U]$. 
Let $H^*$ be the subgraph of $H$ consisting of edges of size $2$.
Let $H'$ be subgraph of $H$ consisting of edges of size $3$ or more. By our definition of $H$,
$\Delta(H')\leq 2rc'_2n^\frac{1}{m}$.  We obtain a large independent set
$W$ in $H$ as follows. Initially set $W=\emptyset$.
Let $v$ be any vertex in $H$ and for each $i\geq 0$ let $S_i$ denote the set of vertices
at distance $i$ from $v$ in $H^*$.   Let $0\leq k\leq m-1$ be
the smallest  integer such that $\frac{|S_{i+1}|}{|S_i|}\leq n^{\frac{1}{m}}$. Such $k$ exists since otherwise we would have $|S_m|>n$, a contradiction.
Since $H$ contains no linear cycle of length $2m+1$, by Lemma \ref{odd-cycle-levels}, 
$H[S_k]$ contains an independent set $S'$ of size at least $\frac{|S_k|}{2m-1}$.
Let $\widetilde{S}=S_{k-1}\cup S_k\cup S_{k+1}$ (or $\widetilde{S}=S_0\cup S_1$, if $k=0$). Then
the neighbors in $H^*$ of vertices in $S'$ lie in $\widetilde{S}$.
By our choice of $k$, $|\widetilde{S}|<(n^\frac{1}{m}+2)|S_k|<(2m-1)(n^\frac{1}{m}+2)|S'|<3mn^\frac{1}{m}|S'|$. Let $Z$ be a set of vertices in $H$ obtained by
picking a vertex in $e\setminus \widetilde{S}$, if exists, for each edge $e$ 
in $H'$ that contains a vertex in $S'$. Since $\Delta(H')\leq 2rc'_2n^\frac{1}{m}$,
we have $|Z|\leq 8rc'_2 n^\frac{1}{m}|S'|$. By our discussion above, $|\widetilde{S}\cup Z|\leq
c'_3 n^\frac{1}{m}|S'|$ for some positive constant $c'_3$ depending on $m$ and $r$.
We add $S'$ to $U$ and delete $\widetilde{S}\cup Z$ from $H$ and iterate the process
until we run out of vertices. By design, the final $W$ is an independent set in $H$
that has size at least $\frac{n/2}{c'_3 n^\frac{1}{m}}\geq \frac{n^\frac{m-1}{m}}{2c'_3}$.
Since $n\geq b_{m,r} t^\frac{m}{m-1}$, by choosing $b_{m,r}$ to be large enough, we can ensure
$\alpha(H)\geq t$. Since $H=G'[U]$, we have $\alpha(G)\geq \alpha(G')\geq t$. 
\end{proof}
%%%%%%%%%%%%%%%%%%%%%%%%%%%%%%%%%%%%%%%%%%%%%%%%%%%

\section{Concluding Remarks}

Our main objective in this paper is to establish an $O(n^{1+\fl{\frac{2}{\ell}}})$ bound
on $ex_L(n,C^r_\ell)$. We chose constants
$c_{r,\ell}$ and $c'_{r,\ell}$ in Theorem \ref{even-cycles} and Theorem \ref {odd-cycles}. 
larger than necessary in order to simplify our presentation. 
It is possible that like in the graph  case one could find a constant $c_r$, depending on $r$, such that $ex_L(n,C^r_{2m})\leq c_r m n^{1+\frac{1}{m}}$. It will be interesting
to see whether that indeed is the case.

The study of $ex_L(n,C^3_{2m})$ has a natural connection to the so-called {\it rainbow
Tur\'an number} $ex^*(n,C_{2m})$ of a cycle of length $2m$, which denotes the maximum
number of edges in an $n$-vertex 
graph that admits a proper edge-coloring that contains no cycle of
length $2m$ all of whose edges have different colors. The main conjecture from \cite{rainbow-Turan} is that $ex^*(n,C_{2m})=O(n^{1+\frac{1}{m}})$, which remains open except for $C_4$ and $C_6$. See Das, Lee, and Sudakov \cite{rainbow-cycle} for some recent progress on the problem.
Interestingly, there it is not too hard to obtain an $\Omega(n^{1+\frac{1}{m}})$ lower bound
on $ex^*(n,C_{2m})$ through an explicit construction 
using $B^*_k$-sets. 
Here, the difficulty in finding a good lower bound on $ex_L(n,C^r_\ell)$  for $r\geq 3$
is similar to that for $ex(n,C_\ell)$ for even cycles $C_\ell$.
Verstra\"ete \cite{Jacques-communication} observed that by taking
a random subgraph of a Steiner triple system one can show that
$ex(n,C^3_\ell)\geq \Omega(n^{1+\frac{1}{\ell-1}})$. Similarly,
by taking a random subgraph $H$ of a linear $n$-vertex $r$-graph $G$ with $(1-o(1))
\frac{\binom{n}{2}}{\binom{r}{2}}$ edges (such $G$ exists by the well-known packing result of R\"odl \cite {rodl}) and the usual deletion argument, one can show that
\begin{proposition} \label{Turan-lower-bound}
For all integers $r,\ell\geq 3$, $\exists$ a constant $c''_{r,\ell}>0$ such that $ex_L(n,C^r_\ell)>c''_{r,\ell} n^{1+\frac{1}{\ell-1}}$.
\end{proposition}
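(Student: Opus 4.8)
The plan is to run the deletion (alteration) method on a random subgraph of a near-optimal linear $r$-graph. First I would invoke R\"odl's packing theorem \cite{rodl} to fix a linear $r$-graph $G$ on $n$ vertices with $m := |G| \geq (1-o(1))\binom{n}{2}/\binom{r}{2}$ edges; in particular $m \geq n^2/(2r^2)$ for all $n$ sufficiently large. The only structural feature of $G$ that I will use is that it is linear, i.e.\ every pair of vertices lies in at most one edge.

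Second, I would bound the number $N$ of copies of $C^r_\ell$ in $G$. A copy of $C^r_\ell$ has edges $e_1,\dots,e_\ell$ and ``link'' vertices $v_1,\dots,v_\ell$ with $e_i \cap e_{i+1} = \{v_i\}$ (indices mod $\ell$), so $\{v_{i-1},v_i\} \subseteq e_i$ for every $i$. Since $G$ is linear, each pair $\{v_{i-1},v_i\}$ lies in at most one edge of $G$, so the entire edge set of the copy is determined by the ordered tuple $(v_1,\dots,v_\ell)$. Hence $N \leq n^\ell$; this crude bound is all that is needed (in fact $N = \Theta(n^\ell)$ when $G$ is a near-perfect partial Steiner system).

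Third, form the random subgraph $H \subseteq G$ by keeping each edge independently with probability $p := \lambda\, n^{-(\ell-2)/(\ell-1)}$, where $\lambda = \lambda(r,\ell) > 0$ is a small constant fixed at the end. Let $X = |H|$ and let $Y$ be the number of copies of $C^r_\ell$ contained in $H$. By linearity of expectation, $\mathbb{E}[X] = pm$ and $\mathbb{E}[Y] = p^\ell N \leq p^\ell n^\ell$. A direct exponent computation gives $2 - (\ell-2)/(\ell-1) = \ell/(\ell-1) = 1 + 1/(\ell-1)$ and $\ell - \ell(\ell-2)/(\ell-1) = \ell/(\ell-1)$, so $pm \geq \tfrac{\lambda}{2r^2}\, n^{1+\frac{1}{\ell-1}}$ while $p^\ell n^\ell = \lambda^\ell\, n^{1+\frac{1}{\ell-1}}$. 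Choosing $\lambda$ so that $\lambda^{\ell-1} \leq 1/(4r^2)$ yields $\mathbb{E}[X-Y] = pm - p^\ell N \geq \tfrac{\lambda}{4r^2}\, n^{1+\frac{1}{\ell-1}}$. Fix an outcome $H$ with $X - Y \geq \mathbb{E}[X-Y]$, and delete one edge from each copy of $C^r_\ell$ in $H$ to obtain $H'$; at most $Y$ edges are removed. Then $H'$ is a subgraph of $G$, hence linear; it contains no copy of $C^r_\ell$ (any such copy would survive in $H$ and would have had an edge deleted); and $|H'| \geq X - Y \geq \tfrac{\lambda}{4r^2}\, n^{1+\frac{1}{\ell-1}}$. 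Setting $c''_{r,\ell} := \tfrac{\lambda}{8r^2}$, with the slack absorbing the $(1-o(1))$ factor and the finitely many small $n$, gives the claimed strict bound.

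I do not expect a genuine obstacle: the exponent-balancing choice of $p$ is routine, and the single load-bearing point is the count $N \leq n^\ell$, which is immediate once one notices that linearity forces the edges of a linear cycle to be determined by its link vertices. The main things to be careful about are (i) coupling $X$ and $Y$ through the single quantity $\mathbb{E}[X-Y]$ rather than controlling them on separate events, and (ii) recording explicitly that both passing to a subgraph and deleting edges preserve linearity, so that $H'$ is a legitimate competitor for $ex_L(n,C^r_\ell)$.
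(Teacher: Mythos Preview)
Your proposal is correct and is exactly the argument the paper has in mind: the paper only sketches the proof, saying to take a random subgraph of a linear $r$-graph with $(1-o(1))\binom{n}{2}/\binom{r}{2}$ edges (via R\"odl's theorem) and apply ``the usual deletion argument,'' which is precisely what you carry out. Your observation that linearity forces the edges of a copy of $C^r_\ell$ to be determined by the $\ell$ link vertices, yielding $N\le n^\ell$, is the key point that makes the exponent work, and your choice of $p$ and $\lambda$ is the standard balancing.
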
 

Using generalized Sidon sets such as the ones considered in \cite{Ruzsa}
and \cite{LV}, it is conceivable that one can obtain a similar (or better) 
constructive lower bound on $ex_L(n,C^3_\ell)$ (and maybe also for all
$r\geq 3$.) This is an area worth some exploration.

Our Ramsey bounds on $R(C^r_\ell, K^r_t)$ are similar to those for graphs. However, as 
speculated in \cite{KMV}, for $r\geq 3$ perhaps $R(C^r_\ell, K^r_t)=\Theta^*(t^\frac{\ell}{\ell-1})$ holds, where $O^*$ and $\Omega^*$ are defined in Section 7.
It will be interesting to further sharpen our bounds on $R(C^r_\ell, K^r_t)$. By anaylzing the proof of Theorem \ref{odd-cycles}, together with Lemma \ref{local-sparse}
and Lemma \ref{alon-sparse}, one might be able to improve our bound on $R(C^r_{2m+1}, K^r_t)$ by a factor of $(\ln t)^c$. On the other hand, perhaps a more substantial improvement
is possible.

As in \cite{KMV},  let $RL(C^r_\ell, K^r_t)$ denote
the smallest $n$ such that every linear $r$-graph not containg
$C^r_\ell$ has an independent set of size $t$. 
Using our linear Tu\'an bounds and the usual random sampling arugment, 
one readily obtains
$RL(C^r_\ell, K^r_t)=O(t^\frac{\ell}{\ell-1})$.

%%%%%%%%%%%%%%%%%%%%%%%%%%%%%%%%%%%%%%%%%%%%%%%%%%%
\section{Acknowledgment} The authors would like to thank Kostochka, Mubayi, and Verstra\"ete for informative communications and Verstra\"ete for fruitful and stimulating discussions.

%%%%%%%%%%%%%%%%%%%%%%%%%%%%%%%%%%%%%%%%%%%%%%%%%%

\end{document}